\newcommand\supp{\operatorname{supp}}
\renewcommand\Re{\operatorname{Re}}
\renewcommand\Im{\operatorname{Im}}
\theoremstyle{plain}
\newtheorem{theorem}{Theorem}[section]
\newtheorem{corollary}[theorem]{Corollary}
\newtheorem{lemma}[theorem]{Lemma}
\newtheorem{proposition}[theorem]{Proposition}
\theoremstyle{definition}
\newtheorem{remark}[theorem]{Remark}
\newcommand{\E}{\mathbb E }
\renewcommand{\P}{\mathbb P }
\newcommand{\R}{\mathbb R}
\newcommand{\C}{\mathbb C}
\newcommand{\ep}{\varepsilon}
\newcommand{\K}{\mathcal K}
\begin{document}
	
\title[Random orthonormal polynomials]{Random orthonormal polynomials: local universality and expected number of real roots}
\author{Yen Do}
\address{Department of Mathematics, The University of Virginia, Charlottesville, VA 22904, USA}
\email{yendo@virginia.edu}

\author{OANH NGUYEN}
\address{Division of Applied Mathematics, Brown University, Providence RI 02906, USA}
\email{oanh\_nguyen1@brown.edu}

\author{VAN VU}
\address{Department of Mathematics, Yale University, New Haven, CT 06520, USA}
\email{van.vu@yale.edu}

\thanks{Y. Do partially supported by NSF grant DMS-1800855, O. Nguyen supported by NSF grant DMS-1954174, V. Vu supported by NSF grant DMS-1902825.}
\date{}

\begin{abstract}We consider random orthonormal polynomials
$$
F_{n}(x)=\sum_{i=0}^{n}\xi_{i}p_{i}(x),
$$
where $\xi_{0}$, \dots, $\xi_{n}$ are independent random variables with zero mean, unit variance and uniformly bounded $(2+\ep)$ moments, and $(p_n)_{n=0}^{\infty}$ is the system of orthonormal polynomials with respect to a fixed compactly supported measure on the real line. 

Under mild technical assumptions satisfied by many classes of classical  polynomial systems, we establish universality for the leading asymptotics of the average number of real roots of $F_n$, both globally and locally. 

Prior to this paper, these results were known only for random orthonormal polynomials with Gaussian coefficients \cite{lubinsky2016linear} using the Kac-Rice formula, a method that does not extend to the generality of our paper. 

	
\end{abstract}

\maketitle

\section{Introduction}

One of the main goals in analysis (or mathematics in general) is to study real roots of real functions. Functions are often expanded using a basis -- in other words, as linear combination of  selected functions $p_0, p_1, p_2, p_3, \dots$ 
$$F_n(x)= c_0 p_0(x)+ c_1 p_1(x) +c_2 p_2(x)+\dots+c_n p_n(x).$$
Here, $c_i$ are real coefficients. The most popular bases are:
\begin{itemize}
\item The canonical base: $1, x, x^2,\dots$  This leads to the Taylor expansion.
\item Fourier base: $1$, $\sin(x), \cos (x), \sin(2x), \cos (2x),\dots$ This leads to the Fourier expansion. 
\item Orthogonal base: $p_0, p_1, p_2, p_3, \dots$ are orthonormal polynomials. 
\end{itemize}

Starting with fundamental  works of Littlewood-Offord \cite{LO1,LO2,LO3} and Kac \cite{Kac1943average}  in the 1940s, many researchers studied 
roots of random functions.  To generate a random function, we let $\xi_0,..., \xi_n$ be  iid random variables with mean $0$ and variance $1$, and consider 
$$F_n(x) =\xi_0 p_0(x)  +...+ \xi_n p_n(x).$$
The number of real roots of $F_n$ now becomes a random variable, which we denote by $N_n$.  This object has been studied intensively through many decades. 

For the canonical case with $p_i(x) =x^i$, we have  $F_n(x) =\xi_0 +\dots+ \xi_n x^n$. This is called Kac polynomial. 
In a classical result \cite{Kac1943average}, Kac showed that

\begin{theorem}[Kac, 1943]
Consider the Kac polynomial $F_n(x) =\xi_0 +\dots+ \xi_n x^n$ where $\xi_j$ are iid standard Gaussian. Then
$$\E N_n = \frac {2}{\pi}\log n + o(\log n).$$
\end{theorem}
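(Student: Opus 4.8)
\emph{Proof plan.} The plan is to apply the Kac--Rice (Edelman--Kostlan) formula, evaluate the resulting root density in closed form, and then estimate one one-dimensional integral. Since $f(x)=\sum_{i=0}^{n}\xi_i x^i$ is a centered Gaussian process whose sample paths are degree-$n$ polynomials and $(f(x),f'(x))$ is nondegenerate for every $x$, the formula gives
$$
\E N_n=\int_{\R}\rho_n(x)\,dx,\qquad
\rho_n(x)=\frac1\pi\sqrt{\partial_s\partial_t\log K(s,t)\big|_{s=t=x}},\qquad
K(s,t)=\sum_{i=0}^{n}(st)^i=\frac{1-(st)^{n+1}}{1-st}.
$$
I would first reduce to $(0,1)$: reversing the coefficient vector corresponds to $x\mapsto 1/x$ and multiplying $\xi_i$ by $(-1)^i$ corresponds to $x\mapsto -x$, and both preserve the (Gaussian) law of the coefficients, so the expected number of real roots in each of $(-\infty,-1)$, $(-1,0)$, $(1,\infty)$ equals that in $(0,1)$; hence $\E N_n=4\int_0^1\rho_n(x)\,dx$.

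Next I would compute $\rho_n$ explicitly: writing $\log K(s,t)=\log(1-(st)^{n+1})-\log(1-st)$ and differentiating twice yields, for $0<x<1$,
$$
\pi^2\rho_n(x)^2=\frac{1}{(1-x^2)^2}-\frac{(n+1)^2x^{2n}}{\bigl(1-x^{2(n+1)}\bigr)^2},
$$
so the task reduces to $\int_0^1\rho_n(x)\,dx=\tfrac1{2\pi}\log n+o(\log n)$. I would prove this by splitting at $x=1-n^{-1/2}$. On $(0,1-n^{-1/2})$ the subtracted term is, relative to $(1-x^2)^{-2}$, at most $(n+1)^2x^{2n}\le (n+1)^2e^{-2\sqrt n}\to 0$ uniformly, so $\rho_n(x)=\tfrac{1+o(1)}{\pi(1-x^2)}$ and $\int_0^{1-n^{-1/2}}\rho_n=(1+o(1))\int_0^{1-n^{-1/2}}\tfrac{dx}{\pi(1-x^2)}=\tfrac1{4\pi}\log n+O(1)$. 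On $(1-n^{-1/2},1)$ I would substitute $x=e^{-u/(2(n+1))}$, so that $u\in(0,U_n)$ with $U_n=2\sqrt n+O(1)$ and $u/(n+1)=O(n^{-1/2})$; expanding $1-x^2=1-e^{-u/(n+1)}$, $x^{2n}=e^{-u}e^{u/(n+1)}$, $x^{2(n+1)}=e^{-u}$, $dx=\tfrac{x}{2(n+1)}\,du$ to first order in $u/(n+1)$ gives $\pi^2\rho_n^2=(n+1)^2 h(u)\,(1+O(n^{-1/2}))$ with $h(u):=u^{-2}-\tfrac14\sinh^{-2}(u/2)>0$, hence
$$
\int_{1-n^{-1/2}}^1\rho_n(x)\,dx=\frac{1+O(n^{-1/2})}{2\pi}\int_0^{U_n}\sqrt{h(u)}\,du.
$$
Since $h(u)\to\tfrac1{12}$ as $u\to 0$ and $\sqrt{h(u)}=u^{-1}+O(ue^{-u})$ for $u\ge 1$, the last integral is $\log U_n+O(1)=\tfrac12\log n+O(1)$, contributing another $\tfrac1{4\pi}\log n+O(1)$. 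Adding the two pieces gives $\int_0^1\rho_n=\tfrac1{2\pi}\log n+O(1)$, so $\E N_n=\tfrac2\pi\log n+O(1)$, which is stronger than claimed.

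The Kac--Rice justification (paths are polynomials, so $N_n\le n$ a.s.\ and the density is integrable) and the closed-form computation of $\rho_n$ are routine. The delicate point is the integral asymptotics: the logarithm is \emph{not} produced at a single scale. The density $\rho_n(x)$ tracks $\tfrac{1}{\pi(1-x^2)}$ only while $1-x^2\gg n^{-1/2}$ and then saturates at size $\asymp n$ once $1-x^2\lesssim 1/n$; the crossover band $1-x^2\in(n^{-1},n^{-1/2})$ silently supplies the other $\tfrac1{4\pi}\log n$ of the leading term. The substitution $x=e^{-u/(2(n+1))}$ together with $\sqrt{h(u)}=u^{-1}+O(ue^{-u})$ is exactly what is needed to capture that band and to verify that every relative error incurred is $O(n^{-1/2})$, hence negligible against $\log n$.
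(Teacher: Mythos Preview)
Your argument is correct and in fact yields the sharper error $O(1)$; the Edelman--Kostlan density computation, the four-fold symmetry reduction to $(0,1)$, and the change of variables $x=e^{-u/(2(n+1))}$ near $x=1$ all check out. Note, however, that the paper does not actually prove this theorem: it is quoted as Kac's classical 1943 result, purely as historical motivation, with only the remark that Kac's integral formula $\E N_n=\int\!\!\int |y|\,p(t,0,y)\,dy\,dt$ can be evaluated explicitly in the Gaussian case. Your proof is the standard modern execution of precisely that remark, so there is nothing further to compare.
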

More generally, Kac also obtained a  formula for $\E N_n$, as follows
$$ \E N_n=\int_{-\infty}^\infty dt \int_{-\infty}^\infty |y|p(t,0,y) dy$$
where $p(t,x,y)$ is the joint probability density for $f(t)=x$ and $f'(t)=y$ (when the $\xi_i$ are not smooth, this is replaced by the joint distribution measure). In the Gaussian case, it is easy to evaluate this density explicitly, which was used by Kac to derive his famous formula.

It is however very hard to evaluate Kac's formula for other distributions and new ideas were required to  extend this result to non-smooth distributions. A new approach was developed by Erd\'os--Offord \cite{EO} to handle discrete distributions (such as the Rademacher distribution where $\xi_j=\pm 1$ with probability $1/2$ each). This method was later refined by Ibragimov--Maslova \cite{Ibragimov1971expected1} to prove

\begin{theorem} [Ibragimov--Maslova, 1971] Consider the Kac polynomial $F_n(x) =\xi_0 +\dots+ \xi_n x^n$. Assume that $\xi_j$ are iid with zero mean and belong to the domain of attraction of the normal law. Then
$$\E N_n = \frac 2{\pi}\log n + o(\log n).$$
\end{theorem}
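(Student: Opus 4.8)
The plan is to compare the general model with the Gaussian one, where the statement is precisely Kac's theorem quoted above, and to carry the comparison out locally through a Lindeberg-type swapping argument. Write $N_n(I)$ for the number of real roots of $f$ in $I$, and $N_n^{\mathrm{Gauss}}$ for this count when the $\xi_j$ are i.i.d.\ standard Gaussian. First I would exploit the symmetries: $x\mapsto -x$ sends $f$ to $\sum_j(-1)^j\xi_j x^j$, $x\mapsto 1/x$ (after multiplying through by $x^n$) sends it to $\sum_j\xi_{n-j}x^j$, and the composition gives $\sum_j(-1)^{n-j}\xi_{n-j}x^j$; in each case the coefficients remain independent, centered, and uniformly in the domain of attraction of the normal law, and these maps turn $N_n((-1,0))$, $N_n((1,\infty))$, $N_n((-\infty,-1))$ into $N_n((0,1))$ of the corresponding polynomials, while $\{f(0)=0\}$ and $\{f(\pm1)=0\}$ contribute $O(1)$ to the expectation. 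Since the Gaussian model shares these symmetries, Kac's theorem gives $\E N_n^{\mathrm{Gauss}}((0,1))=\tfrac1{2\pi}\log n+o(\log n)$, and it is enough to show that for every coefficient sequence satisfying the above conditions,
\begin{equation}\label{eq:compare}
\E N_n((0,1)) = \E N_n^{\mathrm{Gauss}}((0,1)) + o(\log n).
\end{equation}

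To prove \eqref{eq:compare} I would first localize: the real roots in $(0,1)$ accumulate at $x=1$. On $[0,1-\delta]$ a classical estimate --- Jensen's formula on a complex disk surrounding the interval together with $\E\sum_j|\xi_j|\rho^j<\infty$ for $\rho<1$ (valid because $\E|\xi|<\infty$ in the domain of attraction of the normal law) --- bounds $\E N_n([0,1-\delta])$ by a quantity depending only on $\delta$ and not on $n$, and the same holds for $N_n^{\mathrm{Gauss}}$; with $\delta=\delta_n\to0$ chosen slowly enough, both are $o(\log n)$. This reduces \eqref{eq:compare} to the window $x\in(1-\delta_n,1)$, which is exactly where $f$ begins to look Gaussian: for $x=1-t/n$ the sum $f(x)=\sum_j\xi_j x^j$ is dominated by its first $\Theta(n/t)$ terms, of comparable size, and since $t\le\delta_n n$ one has $\max_j x^{2j}/\sum_j x^{2j}=O(\delta_n)=o(1)$, so no coordinate dominates.

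Now the comparison proceeds window by window. Partition $(1-\delta_n,1)$ into $O(\log n)$ intervals $J_k$, dyadic on the scale $x=1-t/n$; each carries $O(1)$ expected roots in either model. On $J_k$, refined to a sufficiently fine net, the number of real roots of $f$ equals the number of sign changes of $f$ along the net up to a negligible correction from near-tangencies, and the number of sign changes is a bounded function of the vector $(f(x))_{x\in\text{net}}$. That vector is a linear form in the $\xi_j$ spread over a number of comparably weighted coordinates that tends to infinity with $n$ (at worst $\gtrsim 1/\delta_n$), so by the multivariate central limit theorem its law is close to that of its Gaussian counterpart, with an error $\err_k=o(1)$ uniform over $k$; summing, the total error is $O(\log n)\cdot o(1)=o(\log n)$, which gives \eqref{eq:compare} and hence, with Kac's theorem and the symmetry reduction, $\E N_n=\tfrac2\pi\log n+o(\log n)$.

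The crux --- the only place where the hypothesis is used in an essential, quantitative way --- is the uniform anti-concentration needed to legitimize the swapping argument under the bare assumption ``domain of attraction of the normal law'' (no density, possibly infinite variance, normalizing constants $\sqrt{n\ell(n)}$ with $\ell$ slowly varying instead of $\sqrt n$). One needs, uniformly in $n$ and in $x,y$ across the window, bounds $\P\big(|f(x)|\le\ep\, s(x)\big)=o_\ep(1)$ and $\P\big(\|(f(x),f(y))\|\le\ep\,\sigma(x,y)\big)=o_\ep(1)$ with $s,\sigma$ the corresponding standard deviations: the first controls the error of each Lindeberg swap, and the second guarantees that a fine enough net detects every real root --- that roots do not cluster and $f$ has no real tangencies --- which is automatic for continuous $\xi_j$ but must be argued in general. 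These estimates would be obtained by running Esseen's concentration inequality and the Kolmogorov--Rogozin bound along the relevant linear forms, handling the slowly varying normalization with care. (Ibragimov and Maslova instead adapt the Erdos--Offord sign-change method directly; the universality route sketched above is the one in the spirit of the present paper.)
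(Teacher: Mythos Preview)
The paper does not prove this theorem. It is quoted in the introduction as a historical result, with a citation to Ibragimov--Maslova, and is followed only by a remark that the error term has since been improved. No argument for it appears anywhere in the paper; the statement is there purely as background motivating the paper's own results on random orthonormal polynomials, which are proved under the stronger hypothesis of uniformly bounded $(2+\epsilon)$ moments.

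Consequently there is no ``paper's proof'' to compare against. You acknowledge in your final parenthetical that Ibragimov--Maslova proceed via the Erd\H{o}s--Offord sign-change method, and that your sketch instead follows the universality philosophy of the present paper. That is a fair description, but note that the paper's own machinery (Condition~{\bf C1}, the Lindeberg swap in Lemma~4.3, the repulsion and anti-concentration lemmas) all assume finite $(2+\epsilon)$ moments and would not transfer to the bare ``domain of attraction of the normal law'' hypothesis, which allows infinite variance and $\sqrt{n\,\ell(n)}$ normalization. So your route is not the paper's route applied to a different basis; it is a genuinely different argument that would have to be built from scratch.

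On its own merits, your sketch has a real gap exactly where you flag the crux. The reduction by symmetry to $(0,1)$ and the localization to $[1-\delta_n,1)$ via Jensen's formula are standard and fine. But the step ``by the multivariate central limit theorem its law is close to that of its Gaussian counterpart, with an error $\err_k=o(1)$ uniform over $k$'' is doing all the work and is not justified: the number of sign changes along a net is a discontinuous functional of the vector $(f(x))_{x\in\text{net}}$, so convergence in distribution does not by itself give convergence of its expectation. You need the uniform one- and two-point anti-concentration bounds you mention to make the functional effectively continuous and to rule out near-tangencies and root clustering, and asserting that Esseen and Kolmogorov--Rogozin handle this ``with care'' is not a proof --- in the infinite-variance regime the uniformity in $n$, in the net points, and across the $O(\log n)$ windows is precisely the hard part. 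As written this is a strategy outline, not a proof.
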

We note that the estimate $o(\log n)$ for the error term  is  not sharp and has been improved to $O(1)$ in \cite{NNV}, even to $C + o(1)$ for some classical distributions \cite{DHV}.\footnote{The constant $C$  may depend on the distribution $\xi$ of the coefficients $\xi_j$.}

Next, for random polynomials with the trigonometric basis. Let $N_n$ is the number of real roots of for the random trigonometric polynomial 
$$F_n(x)=\xi_{11} \cos(t)+ \xi_{21} \sin(t) + \dots + \xi_{1n}\cos(nt)+\xi_{2n}\sin(nt),$$
where $(\xi_{ik})_{1\le i\le 2, 1\le k\le n}$ are iid standard Gaussian, then Qualls \cite{Q} showed that
$$\E N_n[-\pi,\pi] =2\sqrt {(2n+1)(n+1)/6} = \frac 2{\sqrt 3}n +o(n).$$
The asymptotics for $\E N_n[-\pi,\pi]$ has been proved for more general distributions, such as $\xi_j$ independent with zero mean and unit variance \cite{flasche, nguyenvurandomfunction17}.

Much less is known for orthonormal bases.  Let $\mu$ be a finite positive Borel measure on the real line with infinitely many points
in its support, and with finite power moments of all orders. This guarantees the existence of the system of orthonormal polynomials $(p_n)$ with respect to $\mu$; that is  $p_n$ has degree $n$, real coefficients, positive leading coefficient, and furthermore
$$\int_{\R} p_n(x)p_m(x)d\mu(x) = \begin{cases} 1, & m=n;\\ 0, & m\ne n.\end{cases}$$
These polynomials can be computed using standard arguments as all moments of $\mu$ are finite, (ref. Freud \cite{freudorthogonal} for instance). 

The corresponding combination, which is known as random orthogonal polynomial, is 
\begin{equation}\label{def:F}
F_n(x)=\sum_{j=0}^n \xi_j p_j(x).
\end{equation}

For technical reasons,  we shall assume furthermore that $\mu$ is absolutely continuous with respect to the Lebesgue measure, i.e. $d\mu(x)=w(x)dx$. Some popular choices are 

\begin{itemize}
\item Legendre polynomials: here the weight $w(x)$ is the indicator function of $[-1,1]$.\\
\item Chebyshev polynomials of type 1: here $w(x)=\frac 1{\sqrt{1-x^2}}$ for $x\in (-1,1)$, and zero elsewhere.\\
\item Chebyshev polynomials of type 2: here $w(x)=\sqrt{1-x^2}$ for $x\in (-1,1)$, and zero elsewhere.\\
\item Jacobi polynomials: these are generalizations of Chebyshev and Legendre polynomials, with $w(x)=(1-x)^\beta (1+x)^\gamma$ for $x\in (-1,1)$ and zero elsewhere. The parameters $\beta,\gamma$ are given constants larger than $-1$.\\
\item Laguerre polynomials: here $w(x)=e^{-x}$ for  $x\in \R$.\\
\item Hermitte polynomials: here $w(x)=e^{-x^2}$ for $x\in \R$.
\end{itemize}

The analogue of Kac's theorem for these classes was obtained  \cite{das1971real, wilkins1997expected,das1982real,lubinsky2016linear} for Gaussian distribution.   In particular,  
\begin{itemize}
\item random Legendre polynomials were considered in Das \cite{das1971real} and Wilkins \cite{wilkins1997expected};
\item random Jacobi polynomials, random Hermite polynomials, and random Laguerre polynomials were considered  in Das--Bhatt \cite{das1982real}; 
\end{itemize} 
Lubinsky, Pritsker and Xie  \cite{lubinsky2016linear} showed a very interesting result that for general orthogonal polynomials, the asymptotics for the number of real roots does not depend on the actual measure $\mu$ but only on its support. Their method, using Kac-Rice formula, replied heavily on the assumption that the random variables are standard Gaussian.

\begin{theorem}[Lubinsky--Pritsker--Xie, 2016]\label{t.lpx}
Let $\K\subset \mathbb{R}$ be a finite union of compact intervals. Let $\mu$  be a  Borel measure supported on $\K$ with density $w(x)$ such that $w>0$ for a.~e. $x\in \K$. 

Assume that for every $\ep>0$ there is a constant $C=C(\ep)>1$ and a closed set $S_{\ep}$  of Lebesgue measure less than $\ep$  such that $C^{-1}<w<C$ a.e.  on $\K\backslash S_{\ep}$.

Let $N_n$ be the number of real roots of $F_n(x)=\sum_{j=0}^n \xi_j p_j(x)$ where $\xi_j$ are iid standard Gaussian and $(p_n)$ is the system of orthonormal polynomials with respect to $\mu$. Then 
\begin{equation} \label{eq2}
\E N_n=\frac{1}{\sqrt{3}}n + o(n).
\end{equation}
Furthermore, for any fixed compact interval $[a,b]$ inside $\K\setminus S_{\ep}$ for some $\ep>0$, it holds that
\begin{equation} \label{eq4}
\E N_n([a, b]) =\frac{\nu_\K ([a,b])}{\sqrt{3}} n + o(n),
\end{equation}
where $\nu_{\K}$ is the equilibrium measure of $\K.$  
\end{theorem}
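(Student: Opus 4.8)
Since the $\xi_j$ are standard Gaussian, $F_n(x)=\sum_{j=0}^n\xi_j p_j(x)$ is a centered Gaussian process, so one may invoke the Kac--Rice (Edelman--Kostlan) formula --- a route unavailable for general coefficients. Let $K_n(x,y)=\sum_{j=0}^n p_j(x)p_j(y)$ be the Christoffel--Darboux kernel and set $A_n(x)=K_n(x,x)$, $B_n(x)=\tfrac12 A_n'(x)=\sum_j p_j(x)p_j'(x)$, $C_n(x)=\partial_x\partial_y K_n(x,y)\big|_{y=x}=\sum_j p_j'(x)^2$. Then for every interval $I$,
\[
\E N_n(I)=\int_I\rho_n(x)\,dx,\qquad
\rho_n(x)=\frac1\pi\sqrt{\frac{C_n(x)}{A_n(x)}-\frac{B_n(x)^2}{A_n(x)^2}}=\frac1\pi\sqrt{\;\partial_x\partial_y\log K_n(x,y)\big|_{y=x}\;}.
\]
Morally, after rescaling space near a bulk point $x$ by $L_n(x):=w(x)K_n(x,x)$, the field $F_n$ converges to the stationary centered Gaussian process with covariance $R(t)=\tfrac{\sin\pi t}{\pi t}$, whose expected zero density --- again by Kac--Rice --- equals $\tfrac1\pi\sqrt{-R''(0)/R(0)}=\tfrac1\pi\sqrt{\pi^2/3}=\tfrac1{\sqrt3}$ per unit of rescaled length; since $L_n(x)\sim n\,\nu_\K'(x)$, integrating yields $\tfrac n{\sqrt3}\nu_\K$. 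The task is to justify this through the diagonal asymptotics of $K_n$ and its first two derivatives.

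\textbf{Bulk asymptotics of the kernel.} Fix $\ep>0$. Because $w>0$ a.e.\ on the finite union of intervals $\K$, the measure $\mu$ is regular in the sense of Stahl--Totik, and on $\K\setminus S_\ep$ one has $C^{-1}<w<C$, which supplies the (local) Szeg\H{o} condition. Under these hypotheses the relevant orthogonal-polynomial results give, for Lebesgue-a.e.\ $x$ in the interior of $\K\setminus S_\ep$: (i) the M\'at\'e--Nevai--Totik Christoffel asymptotics $\tfrac1n K_n(x,x)\to\nu_\K'(x)/w(x)$, so $L_n(x):=w(x)K_n(x,x)\sim n\,\nu_\K'(x)$; and (ii) bulk (sine-kernel) universality,
\[
\frac{K_n\big(x+\tfrac{a}{L_n(x)},\,x+\tfrac{b}{L_n(x)}\big)}{K_n(x,x)}\ \longrightarrow\ \frac{\sin\pi(a-b)}{\pi(a-b)}
\]
locally uniformly for $(a,b)$ in a fixed complex polydisc. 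Since the left side is a polynomial in $(a,b)$, a Cauchy estimate promotes this to convergence of all partial derivatives; expanding $\tfrac{\sin\pi u}{\pi u}=1-\tfrac{\pi^2}{6}u^2+O(u^4)$ and differentiating at $a=b=0$ (the limit is even, so its first derivative vanishes) gives, at such $x$,
\[
A_n(x)=K_n(x,x),\qquad B_n(x)=o\big(L_n(x)K_n(x,x)\big),\qquad C_n(x)=\tfrac{\pi^2}{3}\,L_n(x)^2 K_n(x,x)\,(1+o(1)).
\]

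\textbf{The density and integration.} Substituting into the Kac--Rice density,
\[
\rho_n(x)=\frac1\pi\sqrt{\tfrac{\pi^2}{3}L_n(x)^2(1+o(1))}=\frac{L_n(x)}{\sqrt3}\,(1+o(1))=\frac{w(x)K_n(x,x)}{\sqrt3}\,(1+o(1))\ \sim\ \frac n{\sqrt3}\,\nu_\K'(x)
\]
for a.e.\ $x$ in the interior of $\K\setminus S_\ep$. For \eqref{eq4}, take $[a,b]\subset\K\setminus S_\ep$ and integrate: the exact identity $\int_\K w(x)K_n(x,x)\,dx=\sum_{j=0}^n\int p_j^2\,d\mu=n+1$, the a.e.\ limit $\tfrac1n w(x)K_n(x,x)\to\nu_\K'(x)$, and $\int\nu_\K'=1$ give, via Scheff\'e's lemma, $\tfrac1n wK_n\to\nu_\K'$ in $L^1(\K)$; hence $\int_a^b\rho_n=(1+o(1))\tfrac1{\sqrt3}\int_a^b wK_n=(1+o(1))\tfrac n{\sqrt3}\nu_\K([a,b])$, which is \eqref{eq4}. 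For \eqref{eq2}, split $\R=\K_{\ep,\delta}\sqcup(\R\setminus\K_{\ep,\delta})$, where $\K_{\ep,\delta}$ is $\K$ with $S_\ep$ and the $\delta$-neighborhoods of the finitely many endpoints of $\K$ deleted: the first piece contributes $\tfrac n{\sqrt3}(\nu_\K(\K_{\ep,\delta})+o(1))$ by the above, and the second contributes $o(n)$; letting $\ep,\delta\to0$ and using $\nu_\K(\K)=1$ gives \eqref{eq2}.

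\textbf{Where the work lies.} The soft part is Step~1; the substance is the transfer in Steps~2--3. First, one must invoke bulk universality in a form valid at \emph{a.e.} point of $\K\setminus S_\ep$ (not merely at points of continuity of $w$) and carrying enough uniformity to differentiate twice in $x$ --- this is exactly where the hypotheses $C^{-1}<w<C$ off $S_\ep$ and the regularity of $\mu$ enter, and one should cite the appropriate Christoffel-function and universality theorems (M\'at\'e--Nevai--Totik, Totik, Lubinsky). Second, one must bound the contribution of $\R\setminus\K_{\ep,\delta}$: the exterior $\R\setminus\K$ and the $\delta$-neighborhoods of $\partial\K$ give $o(n)$ real zeros (via Jensen's formula applied to the entire function $F_n$ for the exterior, and via integrability of $\nu_\K'$ together with the $O(1)$ Bessel-regime count at the edges), while the bad set obeys $\int_{S_\ep}\rho_n\le|S_\ep|^{1/2}\big(\int_\R\rho_n^2\big)^{1/2}\le\ep^{1/2}\big(\int_\R\rho_n^2\big)^{1/2}$, for which one needs an a~priori estimate $\int_\R\rho_n^2=O(n^2)$ (from $\rho_n^2\le\pi^{-2}C_n/A_n$ and Markov--Bernstein-type bounds for regular $\mu$). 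Assembling these --- and especially upgrading the a.e.\ pointwise asymptotics of $\rho_n$ to the integral asymptotics with uniform control of the error terms --- is the main technical obstacle.
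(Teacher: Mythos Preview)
The paper does not prove Theorem~\ref{t.lpx} at all: it is quoted verbatim as a known result of Lubinsky, Pritsker, and Xie \cite{lubinsky2016linear} and used as a black box. Indeed, the logical structure of the present paper is that the new main theorem (Theorem~\ref{t.main1}) is obtained by combining the local universality results (Corollary~\ref{cor:local:expectation}, Theorem~\ref{thm:edge}) with Theorem~\ref{t.lpx} as input for the Gaussian case. So there is no ``paper's own proof'' of this statement to compare your attempt against.

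That said, your sketch is a faithful outline of how the original Lubinsky--Pritsker--Xie argument actually goes: Kac--Rice reduces the problem to the diagonal asymptotics of $K_n$ and its derivatives, sine-kernel universality (in the Totik/Lubinsky bulk form valid a.e.\ under the stated local boundedness of $w$) gives $\rho_n(x)\sim \tfrac{1}{\sqrt3}\,w(x)K_n(x,x)$, and the M\'at\'e--Nevai--Totik Christoffel asymptotics plus $L^1$ convergence handle the integration. You have also correctly flagged the genuine technical issues in that route --- upgrading a.e.\ pointwise convergence of $\rho_n$ to integral convergence, and controlling the edge/bad-set contributions --- which are exactly where \cite{lubinsky2016linear} spends effort. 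The one estimate in your sketch that is not quite right is the handling of $\int_{S_\ep}\rho_n$: the bound $\int_{\R}\rho_n^2 = O(n^2)$ via $\rho_n^2\le \pi^{-2}C_n/A_n$ and Markov--Bernstein is not obviously available on the bad set $S_\ep$, since there you have no lower bound on $w$ and hence no lower bound on $A_n=K_n(x,x)$; the original argument instead uses the weak-$*$ convergence of the zero-counting measure (as in Proposition~\ref{thm:expectation:limit} of the present paper, which cites \cite[Lemma~3.3]{lubinsky2016linear}) to show that both $S_\ep$ and the exterior carry $o(n)$ expected roots.
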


We recall that the equilibrium measure  $\nu_\K$   minimizes 
$$
I[\nu]=-\iint\log|z-t|d\nu(t)d\nu(z)
$$
among all probability measures $\nu$   supported on $\K$. It is well-known that $\nu_{\K}$ exists, is unique and absolutely continuous with respect to the Lebesgue measure on $\K$ (see  for instance Stahl--Totik \cite[Lemma 4.4.1]{stahl1992general}).

It is natural to ask whether this result remains valid when the $\xi_j$ are no longer Gaussian. This is the goal of our paper. We will assume that the random variables $\xi_j$ are independent (but not necessarily identically distributed) and have uniformly bounded $(2+\ep)$ moments for some $\ep>0$. In other words, there exist positive constants $\ep, C$, independent of $n$, such that for all $0\le j\le n$,
$$\E |\xi_j|^{2+\ep}<C.$$ 

Here is our main result. To the best of our knowledge, this is the first known result for real roots of random polynomials with non-Gaussian coefficients. 
\begin{theorem}\label{t.main1} Let $\mu$ be a measure supported in a finite union $\K$ of compact intervals in $\R$ with a density $w(x)$ with respect to the Lebesgue measure and the orthonormal system $(p_n)_{n=0}^{\infty}$. Assume that for every compact interval $I$ in the interior of $\K$,  there exists a constant $C_I>0$ for which the following hold
		\begin{equation}\label{cond:w} 
	C_I^{-1}\le w(x)\le C_I\quad \text{	almost everywhere on $I$,}
	\end{equation}
and
	\begin{equation}\label{cond:pn}
\sup_{n\ge 0} \sup_{x\in I} |p_n(x)|\le C_I.
	\end{equation}

Let $\xi_j$ be independent random variables with zero mean and unit variance and uniformly bounded $(2+\ep)$ moments, for some fixed $\ep>0$.
Let $N_n(S)$ be the number of roots of $F_n(x) = \sum_{j=0}^n \xi_j p_j(x)$ in a set $S$.
Then there exists a constant $c>0$ such that for any compact interval $[a,b]$ in the interior of $\K$, the following holds
	\begin{equation} \label{eq:main1:2}
	 \E N_n([a,b]) = \frac    {\nu_K([a,b])}{\sqrt 3}n + o(n^{1-c}).
	\end{equation}
\end{theorem}

\begin{theorem} \label{thm:real} Under the hypothesis of Theorem \ref{t.main1}, assume furthermore that the $\xi_i$ are identically distributed, then
\begin{equation} \label{eq:main1:1}
	\E N_n(\R ) = \frac 1 {\sqrt 3} n + o(n).
	\end{equation}
\end{theorem}
\begin{remark}
	The extra assumption that the random variables are identically distributed allows us to directly apply Theorem \ref{thm:conv:nu}. We think that this condition can be removed, given the conditions of $\xi_i$ in Theorem \ref{t.main1}.
\end{remark}



As an application, we apply Theorems \ref{t.main1} and \ref{thm:real} to the system of classical Jacobi polynomials $\left (p_n^{(\beta,\gamma)}\right )_{n\ge 0}$ (which by using particular values of $(\beta,\gamma)$ covers the Legendre polynomials, the Chebyshev polynomials of type 1, and  the Chebyshev polynomials of  type 2). Recall that the measure generating this system  is supported on $(-1, 1)$ and has the following density
\begin{equation}\label{jacobi:w}
w_{\beta, \gamma} = (1-x)^{\beta}(1+x)^{\gamma},\quad\forall x\in (-1, 1).
\end{equation}
It is clear that this density satisfies the hypothesis  involving \eqref{cond:w} in Theorem~\ref{t.main1}.

The following classical theorem from \cite{freudorthogonal} provides an upper bound for $p_n^{(\beta, \gamma)}$.
\begin{lemma} \cite[Theorem 8.1, page 45]{freudorthogonal}
	For every $\beta>-1$ and $\gamma>-1$, there exist positive constants $c$ and $C$ such that for all $x\in (-1, 1)$, for all $n\ge 1$, we have
	\begin{equation}\label{key}
	(1- x^{2})^{c}|p_n^{(\beta, \gamma)}(x)|\le C.
	\end{equation}
\end{lemma}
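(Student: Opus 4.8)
Since $p_n^{(\beta,\gamma)}(-x)=(-1)^n p_n^{(\gamma,\beta)}(x)$ (the weight satisfies $w_{\beta,\gamma}(-x)=w_{\gamma,\beta}(x)$ and the normalizing constants agree), it suffices to produce, for each pair of parameters, a bound valid for $x\in[0,1)$; the estimate near $x=-1$ then follows by swapping $(\beta,\gamma)$ and reflecting. Writing $x=\cos\theta$, I must control $|p_n^{(\beta,\gamma)}(\cos\theta)|$ for $\theta\in(0,\pi/2]$, and only the regime $\theta\to0^{+}$ (i.e. $x\to1^{-}$) requires the weight factor $(1-x^2)^c$.

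\emph{Normal form and the oscillatory range.} Set $\Lambda_n=n+\tfrac{\beta+\gamma+1}{2}$ and
\[
v_n(\theta)=\Big(\sin\tfrac\theta2\Big)^{\beta+\frac12}\Big(\cos\tfrac\theta2\Big)^{\gamma+\frac12}\,p_n^{(\beta,\gamma)}(\cos\theta),
\]
so that $v_n''+\Phi_n v_n=0$ with $\Phi_n(\theta)=\Lambda_n^2+\dfrac{1/4-\beta^2}{4\sin^2(\theta/2)}+\dfrac{1/4-\gamma^2}{4\cos^2(\theta/2)}$ (the trigonometric Liouville form of the Jacobi ODE). Substituting $x=\cos\theta$ in $\int_{-1}^1(p_n^{(\beta,\gamma)})^2 w_{\beta,\gamma}\,dx=1$ gives the one quantitative input I will use, $\int_0^\pi v_n(\theta)^2\,d\theta=2^{-(\beta+\gamma+1)}$. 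Fix $K=K(\beta,\gamma)$ large enough that $\Phi_n(\theta)\asymp n^2$ on $[K/n,\pi/2]$ for all large $n$; this is possible since the two singular terms are $O(1/\theta^2)=O(n^2/K^2)$ there, so for $K$ large $\Phi_n$ is a bounded perturbation of $\Lambda_n^2\asymp n^2$. On $[K/n,\pi/2]$ I would run the Sonin--P\'olya argument on the energy $S_n=v_n^2+(v_n')^2/\Phi_n$: from $S_n'=-(\Phi_n'/\Phi_n^2)(v_n')^2$ one gets $|(\log S_n)'|\le|\Phi_n'|/\Phi_n$, and since $\Phi_n\gtrsim n^2$ while $|\Phi_n'(\theta)|\lesssim 1+\theta^{-3}$ and $\int_{K/n}^{\pi/2}\theta^{-3}\,d\theta\lesssim n^2$, the total variation of $\log S_n$ on $[K/n,\pi/2]$ is bounded uniformly in $n$; hence $S_n$ is within a fixed multiplicative constant of each of its values there. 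Partitioning $[K/n,\pi/2]$ into $\asymp n$ consecutive subintervals on each of which the Pr\"ufer phase of $v_n$ advances by about $2\pi$ (feasible since $\sqrt{\Phi_n}\asymp n$), a Liouville--Green comparison gives $\int_J v_n^2\gtrsim n^{-1}\sup_J S_n$ on each such $J$; summing over the $\asymp n$ subintervals and using $\int_0^\pi v_n^2=O(1)$ forces the average over the subintervals of $\sup_J S_n$ to be $O(1)$, which by the bounded variation of $\log S_n$ yields $S_n=O(1)$ throughout $[K/n,\pi/2]$. Dividing by the (bounded) weight factor,
\[
\big|p_n^{(\beta,\gamma)}(\cos\theta)\big|\;\lesssim\;\big(\sin\tfrac\theta2\big)^{-\beta-\frac12}\;\lesssim\;\theta^{-\beta-\frac12},\qquad\theta\in[K/n,\pi/2].
\]

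\emph{Boundary layer and the choice of $c$.} For $\theta\in(0,K/n]$ I would use the explicit endpoint data $P_n^{(\beta,\gamma)}(1)=\Gamma(n+\beta+1)/(\Gamma(\beta+1)\,n!)\asymp n^{\beta}$ and $\|P_n^{(\beta,\gamma)}\|_{L^2(w_{\beta,\gamma})}^2\asymp n^{-1}$, so $p_n^{(\beta,\gamma)}(1)\asymp n^{\beta+\frac12}$; near $\theta=0$ the relevant solution $v_n$ behaves like $A\,\theta^{\beta+\frac12}$ (the regular solution of $v''=\tfrac{\beta^2-1/4}{\theta^2}v$ consistent with $p_n$ being a polynomial), so $p_n^{(\beta,\gamma)}(\cos\theta)=v_n/\big[(\sin\tfrac\theta2)^{\beta+\frac12}(\cos\tfrac\theta2)^{\gamma+\frac12}\big]$ stays within a bounded factor of $p_n^{(\beta,\gamma)}(1)$ on $(0,K/n]$, giving $|p_n^{(\beta,\gamma)}(\cos\theta)|\lesssim n^{\beta+\frac12}$ there; this matches the bulk bound at $\theta=K/n$, where $\theta^{-\beta-1/2}\asymp n^{\beta+1/2}$. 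Combining the two regimes, for $x\in[0,1)$ and all $n\ge1$,
\[
\big|p_n^{(\beta,\gamma)}(x)\big|\;\lesssim\;\max\Big\{n^{\beta+\frac12},\ (1-x)^{-\frac12(\beta+\frac12)}\Big\},
\]
the two quantities being comparable on the transition scale $1-x\asymp n^{-2}$. Now take $c=\max\big(\tfrac{2\beta+1}{4},\,\tfrac{2\gamma+1}{4},\,\tfrac12\big)>0$. For $x$ near $1$, $(1-x^2)^c\asymp(1-x)^c$ absorbs both $n^{\beta+\frac12}$ (since $c\ge\tfrac12(\beta+\tfrac12)$ and $1-x\lesssim n^{-2}$ in the boundary layer) and the blow-up $(1-x)^{-\frac12(\beta+\frac12)}$; swapping $(\beta,\gamma)$ and reflecting handles $x$ near $-1$; and on a fixed compact subinterval of $(-1,1)$ the bulk bound already gives $|p_n^{(\beta,\gamma)}|=O(1)$. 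Hence $(1-x^2)^c|p_n^{(\beta,\gamma)}(x)|\le C$ for all $x\in(-1,1)$ and $n\ge1$.

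\emph{Main obstacle.} The genuinely delicate step is the matching at the turning-point scale $\theta\asymp1/n$: making the Liouville--Green amplitude comparison rigorous right down to that scale and patching it to the boundary-layer asymptotics so the powers of $n$ and of $1-x$ line up. This is precisely the content of the classical Szeg\H{o}-type estimates, which is why Theorem~8.1 of \cite{freudorthogonal} is quoted rather than reproved here; the Sonin--P\'olya monotonicity, the $L^2$ identity for $v_n$, and the final bookkeeping for $c$ are all routine.
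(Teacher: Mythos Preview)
The paper does not prove this lemma at all: it is stated with the citation \cite[Theorem~8.1, p.~45]{freudorthogonal} and used as a black box to verify hypothesis~\eqref{cond:pn} for Jacobi weights. There is therefore no ``paper's own proof'' to compare against; your proposal is an attempt to supply an argument for a classical result that the authors deliberately quote.

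As a sketch of the classical proof your outline is reasonable and follows a standard route (Liouville normal form, Sonin--P\'olya amplitude control in the bulk, separate treatment of the edge layer $\theta\lesssim 1/n$). The Sonin--P\'olya part is essentially correct: the total variation bound $\int_{K/n}^{\pi/2}|\Phi_n'|/\Phi_n\,d\theta=O(1)$ checks out, and coupling it with the $L^2$ normalization $\int_0^\pi v_n^2\,d\theta=O(1)$ to pin down the size of $S_n$ is the right idea. The point you correctly flag as the obstacle is the only genuinely incomplete step: your boundary-layer claim that $p_n^{(\beta,\gamma)}(\cos\theta)$ ``stays within a bounded factor of $p_n^{(\beta,\gamma)}(1)$ on $(0,K/n]$'' does not follow from the indicial behaviour $v_n\sim A\theta^{\beta+1/2}$ alone, since that asymptotic is valid only for $\theta\to0$ with $n$ fixed and does not a priori persist uniformly out to $\theta\asymp 1/n$. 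The standard way to close this gap is a uniform Bessel comparison (Mehler--Heine type): on $(0,K/n]$ one shows $n^{-\beta-1/2}p_n^{(\beta,\gamma)}(\cos(z/n))$ is uniformly close to $(z/2)^{-\beta}J_\beta(z)$, which is bounded on compact $z$-sets, yielding $|p_n^{(\beta,\gamma)}|\lesssim n^{\beta+1/2}$ there and matching your bulk estimate at $\theta=K/n$. With that in place your choice of $c$ is fine (and could in fact be taken to be $\max\{\tfrac{\beta}{2}+\tfrac14,\tfrac{\gamma}{2}+\tfrac14,0\}+\delta$ for any $\delta>0$, though the lemma only asks for some $c>0$).
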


Thus, for all $\beta>-1$ and $\gamma>-1$, the measure $\mu_{\beta, \gamma}$ satisfies all hypotheses of Theorem \ref{t.main1}. Therefore, we obtain the following corollary.
\begin{corollary} Let $\xi_j$ be independent random variables with zero mean and unit variance  and uniformly bounded $(2+\ep)$ moments, for some fixed $\ep>0$.
 Let $N_n$ be the number of real roots of 
$\sum_{j=0}^{n} \xi_j p_j^{(\beta, \gamma)}$, where  $\beta>-1$ and $\gamma>-1$. 	Then given  any compact interval $[a,b]\subset (-1, 1)$, the following holds
\begin{equation} \label{eq:main1:2:jac}
\E N_n([a,b]) = \frac   {\nu([a,b])} {\sqrt 3} n + o(n),
\end{equation}
where $\nu$ is the equilibrium measure of $[-1, 1]$.\footnote{The density of $\nu$ with respect to the Lebesgue measure is $\frac{dx}{\pi\sqrt {1-x^{2}}}$ (ref. \cite[Example 2.1.5]{stahl1992general}, for instance).}

	Assume furthermore that the random variables $\xi_i$ are identically distributed, we have
			\begin{equation} \label{eq:main1:jac}
	\E N_n = \frac 1 {\sqrt 3} n + o(n).
	\end{equation}
	
\end{corollary}

  Beside the Jacobi polynomials, there are many results in the vast literature for orthonormal polynomials that give sufficient conditions for measures $\mu$ that satisfy \eqref{cond:pn}. We refer the interested reader to several resources such as \cite{freudorthogonal, nevai1994orthogonal, stahl1992general}. Below is an example of such conditions when $\K$ is a single interval, which we assume without loss of generality to be $[-1, 1]$.
\begin{theorem}\label{t.main2} Assume that $\mu$ is supported in $\K = [-1, 1]$ with a density $w(x)$ satisfying
\begin{eqnarray}\label{cond:cir}
	\int_{-1}^{1} \frac{dx}{w(x)\sqrt{1-x^{2}}}<\infty.
\end{eqnarray}
Assume that for any compact interval $I\subset (-1,1)$  there exists a constant $C_I>0$ for which
\begin{equation} 
C_I^{-1}\le w(x)\le C_I\quad \text{	almost everywhere on $I$,}\nonumber
\end{equation}
 and given any compact interval $J\subset (0,\pi)$ there exists a constant $C>0$ such that for all sufficiently small $h>0$,
\begin{equation}\label{cond:ls}
\int_J  |w(\cos(\theta+h))  - w(\cos(\theta))|^2 d\theta \le C|h|.
\end{equation}

Then $\mu$ satisfies the hypothesis of Theorem \ref{t.main1} and in particular, given  any compact interval $[a,b]\subset (-1,1)$, the following holds
\begin{equation} 
	\lim_{n\to\infty} \frac 1 n \E N_n([a,b]) = \frac 1 {\sqrt 3} \nu([a,b]),
\end{equation}
	where $\nu$ is the equilibrium measure of $\K = [-1, 1]$ $($that is $d\nu= \frac{dx}{\pi\sqrt{1-x^{2}}})$.
	
	Assume furthermore that the random variables $\xi_i$ are identically distributed, we have
	\begin{equation} 
	\lim_{n\to\infty} \frac 1 n \E N_n(\R) = \frac 1 {\sqrt 3}.
	\end{equation}

\end{theorem}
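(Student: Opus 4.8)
The only statement that requires genuine work is that the hypotheses on $w$ imply the bound \eqref{cond:pn}: once we know $\sup_{n\ge 0}\sup_{x\in I}|p_n(x)|\le C_I$ for every compact $I\subset(-1,1)$, the measure $\mu$ satisfies all hypotheses of Theorem~\ref{t.main1} — condition \eqref{cond:w} being literally one of the present assumptions — and the two displayed limits follow from \eqref{eq:main1:1}--\eqref{eq:main1:2}. So I will concentrate entirely on establishing local uniform boundedness of the orthonormal polynomials $p_n$.

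The plan is to pass to the unit circle via the Szeg\H{o} correspondence. Writing $x=\cos\theta$, the polynomials $p_n$ for the weight $w$ on $[-1,1]$ are related to the orthonormal polynomials $\varphi_m$ on the unit circle for the weight $W(\theta)=w(\cos\theta)\,|\sin\theta|$ by the classical identities (see \cite{freudorthogonal,nevai1994orthogonal}): up to factors bounded above and below on any compact arc of $(0,\pi)$, $p_n(\cos\theta)$ is controlled by $\Re\bigl(e^{-in\theta}\varphi_{2n}(e^{i\theta})\bigr)$ (and, in some normalizations, also by $|\varphi_{2n-1}(e^{i\theta})|$). Hence for a compact arc $\Gamma\subset(0,\pi)$ with image interval $I\subset(-1,1)$ one gets $\sup_{x\in I}|p_n(x)|\lesssim_I \sup_{\theta\in\Gamma}\bigl(|\varphi_{2n}(e^{i\theta})|+|\varphi_{2n-1}(e^{i\theta})|\bigr)$, so it suffices to bound $|\varphi_m(e^{i\theta})|$ uniformly in $m$ and locally uniformly in $\theta$ away from $0$ and $\pi$.

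Next I would translate the hypotheses into properties of $W$. First, $W\in L^1$ since it is (comparable to) the density of the finite pushforward of $\mu$ under $x\mapsto\pm\arccos x$, so $\log^+W\in L^1$; and from $\log^+(1/W)\le \log^+\tfrac1{w(\cos\theta)}+\log^+\tfrac1{|\sin\theta|}\le \tfrac1{w(\cos\theta)}+\log^+\tfrac1{|\sin\theta|}$ together with $\int_{-1}^1\frac{dx}{w(x)\sqrt{1-x^2}}=\int_0^\pi\frac{d\theta}{w(\cos\theta)}<\infty$ (which is \eqref{cond:cir} rewritten) and the integrability of $\log^+\tfrac1{|\sin\theta|}$, we obtain $\log^-W\in L^1$; hence $W$ obeys the Szeg\H{o} condition $\int|\log W|<\infty$. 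Second, on a compact arc $\Gamma\subset(0,\pi)$ the factor $|\sin\theta|$ is smooth and bounded above and below, so the local two-sided bound on $w$ and the condition \eqref{cond:ls} transfer at once to: $W$ is bounded above and below on $\Gamma$ and $\int_\Gamma|W(\theta+h)-W(\theta)|^2\,d\theta\le C|h|$ for all small $h$. With these in hand I would invoke a local boundedness theorem for orthonormal polynomials on the circle — under the global Szeg\H{o} condition together with a local two-sided bound and a local $L^2$-Lipschitz-$\tfrac12$ condition on the weight, the $\varphi_m$ are bounded uniformly in $m$ on compact subarcs (typically a by-product of the local Szeg\H{o} asymptotics $\varphi_m^*(e^{i\theta})\to\overline{D(e^{i\theta})}^{-1}$, with $D$ the Szeg\H{o} function). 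Transporting this bound back through the identities of the previous paragraph yields \eqref{cond:pn}, and the theorem follows.

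The main obstacle is precisely this last step: extracting a local uniform bound on the $\varphi_m$ from a merely $L^2$-type local smoothness of $W$. The mean-Lipschitz class encoded by \eqref{cond:ls} does not embed into $C^0$, so one cannot simply quote the classical theorems on pointwise Szeg\H{o} asymptotics that assume $W$ continuous (or Dini-continuous) on the arc; one needs the sharper statements valid under mean-Lipschitz hypotheses, which sit at essentially the same threshold that governs uniform convergence of orthogonal polynomial expansions. Pinning down a reference whose hypotheses are matched exactly by \eqref{cond:cir} and \eqref{cond:ls} — and noting that the endpoint behavior near $\pm1$ is irrelevant, since Theorem~\ref{t.main1} only requires bounds on compact subintervals of $(-1,1)$ — is where the care lies; the remaining ingredients (the Szeg\H{o} map, the elementary transfer of moduli of continuity from $w(\cos\theta)$ to $W$, and the final appeal to Theorem~\ref{t.main1}) are routine.
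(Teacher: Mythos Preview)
You correctly reduce the problem to verifying \eqref{cond:pn} on each compact $I\subset(-1,1)$, and your translation of the hypotheses to the circle weight $W(\theta)=w(\cos\theta)\,|\sin\theta|$ is fine. But the gap you yourself flag is real and is not closed in your proposal: you need a uniform bound on the circle polynomials $\varphi_m$ under a global Szeg\H{o} condition combined with only \emph{local} two-sided bounds and local $L^2$-Lipschitz-$\tfrac12$ regularity of $W$, and you do not actually produce such a result. The standard mean-Lipschitz bounds (e.g.\ Freud, Theorem~5.2, p.~245 of \cite{freudorthogonal}) are stated under \emph{global} hypotheses---$c\le W\le C$ on the whole circle and the Lipschitz estimate on all of $[-\pi,\pi]$---neither of which holds for your $W$, since nothing in the assumptions controls $w$ near $\pm1$.

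The paper sidesteps this with an auxiliary-measure trick that converts the local hypotheses into global ones for a modified weight, after which only off-the-shelf results are needed. Given $\ep'>0$, define $\alpha'(x)=w(x)$ on $[-1+\ep'/3,\,1-\ep'/3]$ and $\alpha'(x)=1/\sqrt{1-x^2}$ outside. Then $\alpha'(\cos\theta)\,|\sin\theta|$ is two-sidedly bounded on \emph{all} of $(-\pi,\pi]$, and the global mean-Lipschitz condition holds because outside the compact arc $E=\{\theta:\cos\theta\in[-1+\ep'/3,1-\ep'/3]\}$ the function is identically $1$, while on $E$ one combines \eqref{cond:ls} with the smoothness of $|\sin\theta|$ (the transition set between the two regimes has measure $O(h)$). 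Freud's Theorem~5.2 then yields $\sup_{n}\sup_{x\in[-1,1]}|q_n(x)|<\infty$ for the orthonormal polynomials $q_n$ of $\alpha$. Finally, a comparison theorem of Freud (Theorem~5.7, p.~245)---whose hypotheses are precisely that $\alpha'=w$ on a subinterval, that the $q_n$ are uniformly bounded, and that $\int_{-1}^1\frac{dx}{w(x)\sqrt{1-x^2}}<\infty$, which is exactly where \eqref{cond:cir} enters---transfers the bound to the $p_n$ on compact subsets of that subinterval. The missing idea in your argument is this auxiliary weight $\alpha$: it replaces the hunt for a local theorem by two global citations.
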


\section{Local universality of random orthogonal polynomials}\label{s.local}

In this section, we discuss our second set of results, where we prove in particular that many local statistics of the distribution of real roots of $F_n=F_{n,\xi}$  are essentially independent of the distribution of its coefficient vector $\xi=(\xi_0,\dots,\xi_n)$. An example of a local statistics that would be of interest is the average number of roots, the average pairs of roots, and more generally the average number of $k$-tuples of roots of $F_n$ that are inside a given short interval. The rough size of the length of the interval is often called the  scale of the local statistics, and as is common in this subject, the smaller the scale the more difficult the proof would be. On the other hand, there is also a natural lower bound on the scale, since we would want to have typically at least one root inside the interval. In the context of random orthonormal polynomials,  the natural smallest scale is $1/n$, as demonstrated by Gaussian theory \cite{lubinsky2016linear}. Our results below, Theorem~\ref{thm:general:complex}  and Theorem~\ref{thm:general:real}, are stated in this optimal scale, and results for some larger scales could also be obtained by concatenating intervals of the sharp scale obtained here.

Two sequences of random variables $(\xi_1, \dots, \xi_n)$ and $(\tilde \xi_1, \dots, \tilde \xi_n)$ are said to satisfy the moment matching condition {\bf C1} if the following hold, for some constants $N_0, \tau >0$  and $0<\ep<1$. 

{\bf Moment Matching Condition C1.}
\begin{enumerate} [(i)]
	
	\item {\it Bounded $(2+\ep)$ moments:} \label{cond-moment} The random variables $\xi_i$ (and similarly $\tilde \xi_i$), $0\le i \le n$,  are independent (real or complex, not necessarily identically distributed) random variables with unit 
	variance\footnote{For a complex random variable $Z$, the variance of $Z$ is defined to be $\E|Z - \E Z|^{2}$.} and bounded $(2+\ep)$ moments, namely $\E\left |\xi_i \right |^{2+\ep} \le \tau$.

	\item {\it Matching moments to second order with finite exceptions:}\label{cond-matching} For any $i\ge N_0$, for all $a, b\in \{0, 1, 2\}$ with $a+b\le 2$, $$\E \Re\left (\xi_i\right )^{a}\Im \left (\xi_i\right )^{b} = \E \Re\left (\tilde \xi_i\right )^{a}\Im \left (\tilde \xi_i\right )^{b}.$$
	
\end{enumerate}

\begin{theorem}[General Complex universality]\label{thm:general:complex} 
Assume the hypothesis of Theorem \ref{t.main1} for the measure $\mu$ and its orthonormal polynomials $(p_j)$. Assume furthermore that the coefficients $\xi_i$ and $\tilde \xi_i$ satisfy Condition {\bf C1}. Let $k$ be any positive integer and $C>0$. Let $I$ be a compact interval in the interior of $\K$. Then there exist positive constants $C'$ and $c$ independent of $n$  such that the following holds.
	
	For any complex numbers $z_1, \dots, z_k$ in $I+ B(0, c/n)$ and any function $G: \mathbb{C}^{k}\to \mathbb{C}$ supported on \newline$\prod_{i=1}^{k} B (z_i, c/n) $ with continuous derivatives up to order $2k+4$ and $||\triangledown^aG||_{\infty} \le C n^{a}$ for all $0\le a\le 2k+4$, we have 
	\begin{eqnarray}
	\left |\E\sum G\left (\zeta_{i_1}, \dots, \zeta_{i_k}\right) -\E\sum G\left (\tilde \zeta_{i_1}, \dots, \tilde \zeta_{i_k}\right) \right |\le C'n^{-c},\label{gcomplexb}
	\end{eqnarray}
	where the first sum runs over all $k$-tuples\footnote{For example, if $k=2$ and $F_n$ only has two roots $\zeta_1$ and $\zeta_2$, then the first sum is $G(\zeta_1, \zeta_1) + G(\zeta_1, \zeta_2)+G(\zeta_2, \zeta_1)+G(\zeta_2, \zeta_2)$.} $(\zeta_{i_1}, \dots, \zeta_{i_k})$ of the roots $\zeta_1, \zeta_2, \dots$ of $F_n$, and 
	the second  sum runs over all $k$-tuples $(\tilde \zeta_{i_1}, \dots, \tilde \zeta_{i_k})$ of the roots $\tilde \zeta_1, \tilde  \zeta_2, \dots$ of $ \tilde F_n$. 
\end{theorem}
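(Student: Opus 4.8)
The plan is to adapt the comparison (Lindeberg exchange) method of \cite{TVpoly}: rather than evaluating $\E\sum G$ separately for $F_n$ and $\tilde F_n$, reduce \eqref{gcomplexb} to showing that a bounded functional built from the values of $F_n$ at a few points in the balls is insensitive to the coefficient distribution, and establish the latter by replacing $\xi_j$ by $\tilde\xi_j$ (for $j\ge N_0$) one index at a time. I would normalize $\widehat F_n(z):=F_n(z)/\sqrt{K_n(\Re z)}$ with $K_n(x)=\sum_{j\le n}|p_j(x)|^2$, so $\E|\widehat F_n(x)|^2=1$ on $I$; here $K_n(x)\asymp n$ uniformly near $I$, the upper bound immediate from \eqref{cond:pn} and the lower bound a standard Christoffel-function estimate under \eqref{cond:w}. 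I would also use the Bernstein-type consequence of \eqref{cond:pn} (after slightly enlarging $I$): $|p_j^{(a)}(x)|=O_a(j^a)$ on $I$, hence $|p_j(z)|=O(1)$ for $z$ within $O(1/n)$ of $I$.

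\textbf{Step 1: reduction to a local functional of $F_n$.} Using $\sum_\zeta\delta_\zeta=\tfrac1{2\pi}\Delta\log|F_n|$ and its $k$-fold analogue, $\E\sum G(\zeta_{i_1},\dots,\zeta_{i_k})$ equals $(2\pi)^{-k}\int\big(\Delta_{z_1}\cdots\Delta_{z_k}G\big)\,\E\big[\prod_j\log|F_n(z_j)|\big]\,dz$ up to diagonal and multiplicity corrections, which are handled by the lower cases $1,\dots,k-1$ together with an a priori bound on the number of zeros of $F_n$ in $B(z_i,2c/n)$ (see Step 3). Writing $\log|F_n(z_j)|=\tfrac12\log K_n(\Re z_j)+\log|\widehat F_n(z_j)|$, the deterministic part $\tfrac12\log K_n$ is slowly varying (by \eqref{cond:w}--\eqref{cond:pn}), and --- since $\int\Delta_{z_j}(\cdots)\,dz_j=0$ for the compactly supported $G$ --- every contribution that is constant in some $z_j$ or involves a $\Delta$ against the slowly varying part is $O(n^{-c})$; this reduces matters to the quantity $\E[\prod_j\log|\widehat F_n(z_j)|]$, uniformly over $z_j\in B(z_i,c/n)$. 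The hypothesis $\|\nabla^a G\|_\infty\le Cn^a$ up to $a=2k+4$ is exactly what makes these reductions quantitative at the scale $n^{-c}$, and the same reduction applies verbatim to $\tilde F_n$.

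\textbf{Step 2: truncating the logarithm and the exchange.} For small $\epsilon>0$ split $\log|w|=\ell_\epsilon(w)+r_\epsilon(w)$, $\ell_\epsilon$ smooth and equal to $\log|w|$ on $\{|w|\ge\epsilon\}$, $r_\epsilon$ supported on $\{|w|\le\epsilon\}$. By the anti-concentration bounds of Step 3 (controlling $\P(|\widehat F_n(w)|\le\epsilon)$ and $\E[(\log|\widehat F_n(w)|)^2\mathbf 1_{|\widehat F_n(w)|\le\epsilon}]$), the $r_\epsilon$-contributions are $O(\epsilon^\theta)$ uniformly in $n$; take $\epsilon=n^{-c}$. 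It remains to compare $\E\,\Phi(\widehat F_n(w_1),\dots,\widehat F_n(w_k))$, where $\Phi(u_1,\dots,u_k):=\prod_j\ell_\epsilon(u_j)$ (so $\|\nabla^a\Phi\|_\infty=n^{O(c)}$), between $F_n$ and $\tilde F_n$. Now truncate each $\xi_j$ to a recentered, renormalized copy $\xi_j^\flat$ of $\xi_j\mathbf 1_{|\xi_j|\le n^\delta}$ --- an $O(n^{-c})$ change by the $(2+\ep)$-moment hypothesis and the a priori estimates --- and likewise $\tilde\xi_j$; then swap one index at a time for $j\ge N_0$. Since $\widehat F_n(w)$ is \emph{linear} in $\xi$, writing $F=A+\xi_j^\flat p_j$ with $A$ independent of $(\xi_j,\tilde\xi_j)$ gives $\widehat F_n(w_l)=(\text{indep.\ of }\xi_j)+\xi_j^\flat q_{j,l}$ with $|q_{j,l}|=|p_j(w_l)|/\sqrt{K_n(\Re w_l)}=O(n^{-1/2})$; Taylor-expanding $\Phi$ to third order in $\xi_j^\flat$, the zeroth-, first-, and second-order terms match up to $O(n^{-c})$ by the moment-matching condition \textbf{C1}(ii) and the truncation estimates, while the third-order remainder is $O(n^\delta)\cdot n^{O(c)}\cdot O(n^{-3/2})$; summing the $\le n$ swaps gives $O(n^{-1/2+\delta+O(c)})$, which is $O(n^{-c'})$ once $\delta,c$ are chosen small. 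Combined with Step 1 this gives \eqref{gcomplexb}.

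\textbf{Step 3: anti-concentration --- the main obstacle.} All of the above rests on a small-ball estimate: for every real $w$ in a fixed neighborhood of $I$, $\sup_u\P(|\widehat F_n(w)-u|\le\lambda)\lesssim\lambda^\theta$ for some fixed $\theta>0$ uniformly in $n$, plus its natural multi-point refinement and the consequent a priori bound $\E[N_n(B(z_i,2c/n))^{k+1}]=O(1)$ via Jensen's formula. This is the genuinely new ingredient: for Kac polynomials the analogous estimate exploits that $1,x,x^2,\dots$ form a geometric progression, and that argument has no counterpart for a general orthonormal system. Following the strategy flagged in the introduction, I would combine orthogonality with the Remez inequality --- orthogonality prevents $p_0,\dots,p_n$ from being simultaneously small on a set of non-negligible measure, and Remez upgrades such an $L^2$/measure lower bound to a pointwise one, producing a point $t\in I$ and signs $\ep_k=\pm1$ for which $\big|\sum_{k\in S}\ep_k p_k(t)\big|$ is bounded below on a set $S$ of boundedly many indices --- and then feed this non-degeneracy into an Esseen/Littlewood--Offord inequality, which needs only second-moment (after truncation, bounded) information on the $\xi_j$. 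I expect the bulk of the work to lie in this step; granting it, Steps 1--2 are fairly standard adaptations of the comparison machinery, and the theorem follows.
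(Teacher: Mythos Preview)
Your overall architecture---Green's formula, truncate the logarithm, Lindeberg swap using delocalization $|p_j(z)|/\sqrt{V_n(z)}=O(n^{-1/2})$---matches the paper, and your identification of anti-concentration as the crux is right. But there is a genuine gap between what you claim in Step~3 and what the Remez argument actually delivers, and Step~2 depends on the stronger (unproven) version.

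You assert a \emph{pointwise} small-ball bound ``for every real $w$ \dots\ $\sup_u\P(|\widehat F_n(w)-u|\le\lambda)\lesssim\lambda^\theta$'' and, crucially, control of $\E[(\log|\widehat F_n(w)|)^2\mathbf 1_{|\widehat F_n(w)|\le\epsilon}]$ at each $w$. The Remez/orthogonality mechanism you (correctly) describe does not give this: it produces, inside each interval of length $c/n$, \emph{some} point $x'$ at which a Hal\'asz-type separation holds for the coefficients, yielding $\P(|F_n(x')|\le e^{-n^{c_1}})=O(n^{-A})$. There is no pointwise $\lambda^\theta$ bound at an arbitrary $w$---for discrete $\xi_j$ (e.g.\ Rademacher) the value set of $\widehat F_n(w)$ can cluster near $0$ at specific $w$'s, and Berry--Esseen only gives $\lambda+n^{-\epsilon'}$, which does not control $\E[(\log|\widehat F_n(w)|)^2\mathbf 1_{\{\cdot\}}]$ because the additive term survives as $\lambda\to 0$. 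So your $r_\epsilon$ estimate in Step~2 is not justified.

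What the paper does instead is to combine the ``some point'' anti-concentration with a complex-analytic propagation step that you are missing. On a good event $A_0$ (of probability $1-O(n^{-A})$) one has both $\log|F_n|\le n^{c_1}$ on the whole local disk and $\log|F_n(w_0)|\ge -n^{c_1}$ at one point $w_0$; then Jensen's bound gives $N(B(w_0,1/n))\lesssim n^{c_1}$, one factors $F_n=f\cdot\prod_j(z-w_j)$ over the local zeros, and Harnack's inequality applied to the harmonic function $\log|f|$ converts the one-point lower bound into $\log|f(z)|\ge -Cn^{3c_1}$ uniformly on the smaller disk. This yields an \emph{integrated} bound $\E[\mathbf 1_{A_0}\int_{B}|\log|F_n(z)||^2\,dz]=O(n^c)$, which is exactly what Green's formula needs (the test function is integrated over the disk, so pointwise control is unnecessary). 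On $A_0^c$ one falls back on the deterministic bound $N\le n$. Your plan becomes correct once you replace the pointwise claims in Steps~2--3 by this Jensen/Harnack argument on the integrated quantity; without it, the $r_\epsilon$ step does not close.
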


\begin{theorem}[General Real universality]\label{thm:general:real} Assume the hypothesis of Theorem \ref{t.main1} for the measure $\mu$ and its orthonormal polynomials $(p_j)$. Assume furthermore that  $\xi_i$ and $\tilde \xi_i$ are real random variables with mean 0 that satisfy Condition {\bf C1}. Let $k$ be any positive integer and $C>0$.  Let $I$ be a compact interval in the interior of $\K$. Then  there exist positive constants $C'$ and $c$ independent of $n$ such that the following holds.
	
	For any real numbers $x_1,\dots, x_k$, all of which are in $I$, and any function $G: \mathbb{R}^{k}\to \R$ supported on $\prod_{i=1}^{k}[x_i-c/n, x_i+c/n] $ with continuous derivatives up to order $2k+4$ and $||\triangledown^aG||_{\infty}\le C n^{a}$ for all $0\le a\le 2k+4$, we have
	\begin{eqnarray} 
	\left |\E\sum G\left (\zeta_{i_1}, \dots, \zeta_{i_k}\right) 
	-\E\sum G\left (\tilde \zeta_{i_1}, \dots, \tilde \zeta_{i_k}\right) \right |\le C' n^{-c},\nonumber
	\end{eqnarray}
	where the first sum runs over all $k$-tuples $(\zeta_{i_1}, \dots, \zeta_{i_k}) \in \R^{k} $ of the roots $\zeta_1, \zeta_2, \dots$ of $F_n$, and the second  sum runs over all $k$-tuples $(\tilde \zeta_{i_1}, \dots, \tilde \zeta_{i_k}) \in \R^{k}$ of the roots $\tilde \zeta_1, \tilde  \zeta_2, \dots$ of $ \tilde F_n$. 
\end{theorem}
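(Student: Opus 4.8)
The plan is to adapt the Lindeberg-type replacement (``swapping'') argument of \cite{TVpoly}, which underlies local universality for Kac polynomials (see also \cite{DOV}): rather than comparing the weighted root counts directly, I would rewrite the $k$-point statistic $\sum G(\zeta_{i_1},\dots,\zeta_{i_k})$ as an honest smooth functional of a bounded number of local data of $F_n$ on the scale $1/n$, and then replace the coefficients $\xi_i$ by $\tilde\xi_i$ one index at a time, bounding the cost of each replacement. The complex version, Theorem~\ref{thm:general:complex}, is handled by the same scheme with the short real interval replaced by a thin complex neighbourhood; since here $G$ is supported on a real box, only roots within $O(1/n)$ of the real axis matter and the argument can be run directly on the line.

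\emph{Localization and rescaling.} Cover $I$ by $O(n)$ windows of length $\asymp 1/n$; by the support hypothesis on $G$ it suffices to treat $k$-tuples of windows, and after translating we may assume each relevant window is centred near a base point $x_0\in I$. Set $\hat F_n(t):=n^{-1/2}F_n(x_0+t/n)$ for $t$ in a fixed bounded disc. Using \eqref{cond:pn} (together with a Bernstein-type inequality to control $p_j^{(a)}$ on the interior) and \eqref{cond:w}, the variance of $\hat F_n(t)$ and of each Taylor coefficient $c_a:=F_n^{(a)}(x_0)/(a!\,n^{a+1/2})$ is of order $1$, so $\hat F_n$ is a well-scaled random analytic function on the disc. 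Real roots of $F_n$ in the window correspond to real roots of $\hat F_n$ in a bounded interval, and the rescaled test function $\widetilde G(t_1,\dots,t_k):=G(x_0+t_1/n,\dots,x_0+t_k/n)$ has all derivatives up to order $2k+4$ bounded by an absolute constant thanks to $\|\nabla^a G\|_\infty\le Cn^a$. Thus it is enough to compare $\E\sum\widetilde G(t_{i_1},\dots,t_{i_k})$ over real roots of $\hat F_n$ between the two coefficient families.

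\emph{From roots to a smooth functional, and the swap.} On a high-probability ``regularization'' event $\mathcal R$ I would express $\sum\widetilde G(t_{i_1},\dots,t_{i_k})$ as $\Phi\big(c_0,\dots,c_M\big)$ for a fixed $M=M(k)$ and a fixed functional $\Phi$ with bounded derivatives up to order $2k+4$ --- e.g.\ by counting real roots of $\hat F_n$ via the argument principle on a thin complex rectangle and then mollifying the integrand $\widetilde G\cdot \hat F_n'/\hat F_n$; on $\mathcal R$ one demands that $|\hat F_n|$ be bounded below on the contour and that $\hat F_n$ and finitely many derivatives be bounded above, which also makes roots outside the rectangle negligible. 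Replacing $\sum\widetilde G$ by $\Phi$ costs $O(\P(\mathcal R^c))$. Now view $\E\,\Phi(c_0,\dots,c_M)$ as a function of $\xi_0,\dots,\xi_n$ and replace these by $\tilde\xi_0,\dots,\tilde\xi_n$ one at a time. For $i\ge N_0$, changing $\xi_i$ perturbs each $c_a$ by $O(|\xi_i|/\sqrt n)$ (using \eqref{cond:pn} and the Bernstein bound); Taylor-expanding $\Phi$ to third order, the zeroth-, first- and second-order terms match in expectation by independence and the second-order moment matching in {\bf C1}, while the remainder is $O(n^{-1-\ep/2})$ up to logarithmic factors --- here, since only $(2+\ep)$ moments are assumed, the naive third-moment bound must be replaced by a truncation at scale $n^{1/2}$ together with $\E|\xi_i|^{2+\ep}\le\tau$. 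Summing over the $\le n$ indices gives total cost $O(n^{-\ep/2+o(1)})$; the finitely many indices $i<N_0$ each move the $c_a$ by $O(1/\sqrt n)$ and so contribute $O(N_0/\sqrt n)$ via the bound on $\nabla\Phi$. Undoing the rescaling yields the claimed $O(n^{-c})$.

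\emph{The main obstacle.} Everything reduces to showing $\P(\mathcal R^c)=O(n^{-c})$, i.e.\ a small-ball (anti-concentration) estimate for $(c_0,\dots,c_M)$, equivalently for $\hat F_n$ and its low-order derivatives. For Kac polynomials this exploits that $1,x,x^2,\dots$ form a geometric progression; that device is unavailable for orthonormal polynomials. The fix, as flagged in the introduction, is to show that inside any interval of length $\asymp 1/n$ in $I$ there is a point $x^\ast$ and a sign pattern $\theta_j\in\{\pm1\}$ over a block of $O(1)$ consecutive indices with $\big|\sum_j\theta_j p_j(x^\ast)\big|\gtrsim 1$: orthonormality controls an $L^2$-average of such sign sums, and the Remez inequality promotes a lower bound at a coarse scale to a pointwise lower bound. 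Plugging this non-degeneracy into an Esseen/Berry--Esseen-type small-ball bound gives anti-concentration for $\hat F_n(x^\ast)$, and differentiating the same construction (again with Bernstein's inequality) extends it to the whole vector $(c_0,\dots,c_M)$, so that $\mathcal R$ fails only on an event of probability $n^{-c}$. I expect this step --- combining Remez with orthogonality to produce the anti-concentration input at the optimal scale $1/n$, uniformly in $n$ --- to be the genuinely hard part; the swapping itself is then routine modulo careful bookkeeping of the $(2+\ep)$-moment truncation.
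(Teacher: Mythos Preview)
Your route diverges from the paper's, and as written it has a genuine gap.

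The paper does \emph{not} run the Lindeberg swap directly for real roots. It first proves the complex version (Theorem~\ref{thm:general:complex}) by swapping on $K_n(\log|F_n(z)|)$ via Green's formula, and then \emph{deduces} Theorem~\ref{thm:general:real} from it. The bridge is a repulsion estimate (Lemma~\ref{l.repulsion}): for small $c_2>0$,
\[
\P\big(N_{F_n}(B(x,n^{-(1+c_2)}))\ge 2\big)=O(n^{-3c_2/2}),
\]
which, after covering a strip $S_1$ of height $\gamma=n^{-(1+c_2)}$ by $O(n^{c_2})$ such balls, gives $\P(N(S_1\setminus\R)\ge 1)=O(n^{-c_2/2})$. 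One then extends $G_1$ to $H(x+iy)=G_1(x)\phi(4y/\gamma)$, applies complex universality to $H$, and controls $\E|X-Y|$ where $X=\sum_j H(\zeta_j)$ runs over complex roots and $Y=\sum_{\zeta_j\in\R}G_1(\zeta_j)$ over real ones. The repulsion lemma itself is proved by first transferring to the Gaussian case via Theorem~\ref{thm:general:complex} and then a Rouch\'e argument comparing $\widetilde F_n$ with its linear Taylor polynomial.

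Your proposal misses exactly this step. When you ``count real roots of $\hat F_n$ via the argument principle on a thin complex rectangle,'' the contour integral picks up \emph{all} zeros inside, real or not; since $F_n$ has real coefficients, non-real ones come in conjugate pairs and contribute in full. Your event $\mathcal R$ asks only that $|\hat F_n|$ be bounded below on the contour and that $\hat F_n$ and a few derivatives be bounded above --- neither condition rules out conjugate pairs sitting inside the rectangle arbitrarily close to the real axis. So the functional $\Phi$ you build encodes a complex-root statistic, and on $\mathcal R$ it need not equal the real-root statistic you want. What is missing is precisely the repulsion input above; once you add it, you have essentially re-derived the paper's reduction to Theorem~\ref{thm:general:complex}, and there is no gain in redoing the swap separately for the real case.

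A secondary point: the anti-concentration the paper actually uses (Lemma~\ref{cond-smallball}) is weaker and differently shaped than what you describe. It asserts only that with probability $1-O(n^{-A})$ there exists \emph{some} $x'$ in the local interval with $|F_n(x')|\ge e^{-n^{c_1}}$; combined with Jensen and Harnack this controls $\int|\log|F_n||^2$ on the good event (Lemma~\ref{l.badevent}). The Remez--orthogonality idea you outline is indeed the mechanism behind that lemma, but it is not pushed to a joint small-ball bound for a vector $(c_0,\dots,c_M)$ of Taylor coefficients, nor to a uniform lower bound along a contour --- both of which your scheme would need and neither of which follows from the single-point statement.
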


Note that for every complex valued random variable $\xi$ with unit variance, there exists a bivariate Gaussian variable $Z$ with the same mean and covariance matrix, and with $(2+\ep)$ moment bounded by $10$ (say). Therefore, by triangle inequality, one can assume in all the proofs that the random variables $\tilde{\xi}_{i}$ are Gaussian.


Using Theorem \ref{thm:general:real}, we will derive an estimate of number of real roots at a local scale. For each $\ep'>0$, let
$$\K_{\ep'} = \{x\in \K: [x - \ep', x+\ep']\subset \K\}.$$
\begin{theorem}\label{thm:local:expectation}
	Under the hypothesis of Theorem \ref{t.main1}, for any $\ep'>0$, there exists a positive constant $c$ such that the following holds for any interval $[a, b]$ of length $O(1/n)$ inside $\K_{\ep'}$:
	\begin{equation*}\label{key}
\left |\E N_n(a, b) - \E \tilde N_n(a, b)\right | \ll n^{-c}
	\end{equation*}
where $\tilde N_n(a, b)$ is the number of real roots in $(a, b)$ of the polynomial $\tilde F_n(x) = \sum_{i=0}^{n} \tilde \xi_i p_i(x)$ with $\tilde \xi_i$ being iid standard Gaussian.
\end{theorem}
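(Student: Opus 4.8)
The plan is to deduce the statement from the real universality result, Theorem \ref{thm:general:real}, applied with $k=1$, by approximating the counting function $N_n(a,b)$ by a suitable smooth test function $G$ of a single root. First I would fix $\ep'>0$ and an interval $[a,b]\subset\K_{\ep'}$ of length $\ell=O(1/n)$, and choose $I$ to be a fixed compact interval with $[a,b]\subset I\subset\K_{\ep'/2}$, so that $I$ lies in the interior of $\K$ and Theorem \ref{thm:general:real} applies on $I$ with some constants $c,C'$. The rough size of the target window $B(x_1,c/n)$ in that theorem is exactly the scale $1/n$, which matches the length of $[a,b]$ up to constants; this is why the theorem is stated at the optimal scale.

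The main steps are as follows. First, replace the sharp indicator $\mathbf 1_{(a,b)}$ by a smooth bump: build $G_1,G_2:\R\to[0,1]$, each supported in an interval of length $O(1/n)$ centered near $[a,b]$, with $G_1\le \mathbf 1_{(a,b)}\le G_2$, with $G_2-G_1$ supported on two boundary windows of length $\delta/n$ (for a small parameter $\delta>0$ to be optimized), and with $\|\nabla^a G_j\|_\infty\ll_\delta n^a$ for $0\le a\le 2k+4=6$. Then $\sum G_j(\zeta_i)$ is sandwiched: $\E\sum G_1(\zeta_i)\le \E N_n(a,b)\le \E\sum G_2(\zeta_i)$, and likewise for $\tilde F_n$. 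Second, apply Theorem \ref{thm:general:real} with $k=1$ to each $G_j$ to get $|\E\sum G_j(\zeta_i)-\E\sum G_j(\tilde\zeta_i)|\ll n^{-c}$. Third, control the gap $\E\sum (G_2-G_1)(\zeta_i)$, and the same for $\tilde\zeta_i$: this is bounded by $\E N_n(\text{boundary windows})$, i.e. the expected number of roots of $F_n$ in two intervals of length $O(\delta/n)$ inside $\K_{\ep'/2}$. Combining these three bounds via the triangle inequality gives
\begin{equation*}
|\E N_n(a,b)-\E\tilde N_n(a,b)| \ll n^{-c} + \E N_n(\text{bdry}) + \E \tilde N_n(\text{bdry}),
\end{equation*}
and one then takes $\delta$ a small power of $n$ going to $0$ (or fixed small) after bounding the boundary terms.

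The hard part — and the only genuinely non-routine input — is the a priori bound on the expected number of real roots of $F_n$ (and of the Gaussian model $\tilde F_n$) in a window of length $O(\delta/n)$ inside the bulk: one needs $\E N_n(\text{window of length }\delta/n)\ll \delta^{a_0}$ for some $a_0>0$ uniformly in $n$, or at least $\ll \delta^{a_0} + n^{-c'}$. For the Gaussian model this should follow from the Kac–Rice density estimates underlying \cite{lubinsky2016linear} (the root intensity is $O(n)$ in the bulk, so a length-$\delta/n$ window carries $O(\delta)$ roots on average). For general $\xi$ one route is to bootstrap: first establish the bound for the window at a slightly larger but still local scale using a crude argument (e.g. relating the number of sign changes on the window to an anti-concentration/repulsion estimate for $F_n$, of the type the paper develops via Remez' inequality), then feed it back. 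Alternatively, since this a priori estimate for fixed small $\delta$ is already what one needs, one can take $\delta$ to be a fixed small constant and absorb the $O(\delta)$ into the error by letting $\delta\to 0$ after $n\to\infty$ — but because the claimed bound is the uniform $n^{-c}$, I expect the cleanest path is to prove the window estimate $\E N_n(\text{length }\delta/n)\ll \delta^{a_0}$ directly (again via Theorem \ref{thm:general:real} comparing to Gaussian, plus the Gaussian Kac–Rice bound), choose $\delta=n^{-c/(2a_0)}$, and conclude. This bootstrap — using the universality theorem both to transfer the main term and, at a coarser scale, to transfer the a priori root-count bound from the Gaussian case — is the step I would expect to require the most care.
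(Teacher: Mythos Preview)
Your overall architecture is the same as the paper's: sandwich $\mathbf 1_{[a,b]}$ between smooth cutoffs $G_1\le\mathbf 1_{[a,b]}\le G_2$, apply Theorem~\ref{thm:general:real} with $k=1$ to each, and control the boundary discrepancy. But you misidentify the ``hard part.'' If you arrange the sandwich correctly, the general-$\xi$ boundary term never appears. Indeed,
\[
\E N_n(a,b)-\E\tilde N_n(a,b)\ \le\ \E\sum G_2(\zeta_i)-\E\sum G_1(\tilde\zeta_i)
=\big[\E\sum G_2(\zeta_i)-\E\sum G_2(\tilde\zeta_i)\big]+\E\sum(G_2-G_1)(\tilde\zeta_i),
\]
and symmetrically for the lower bound. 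So the only boundary term is $\E\sum(G_2-G_1)(\tilde\zeta_i)\le \E\tilde N_n(\text{bdry})$, which is purely Gaussian and is handled directly by Kac--Rice: the paper shows that the Gaussian root intensity is $O(n)$ uniformly on $\K_{\ep'}$ (using the estimates \eqref{cond:w}, \eqref{cond:kernel:2}, \eqref{eq:bound:pj:1dev}), so a window of length $\alpha$ carries $O(n\alpha)$ expected Gaussian roots. Your proposed bootstrap to bound $\E N_n(\text{bdry})$ for general $\xi$ is therefore unnecessary.

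One further point you glossed over: once you take the boundary width $\alpha=\delta/n$ with $\delta=n^{-c'}$ a negative power of $n$ (the paper uses $\alpha=n^{-1-c/12}$), the derivative bounds become $\|G_j^{(a)}\|_\infty\ll\alpha^{-a}=n^{a(1+c')}$, which is \emph{larger} than the $n^a$ allowed in Theorem~\ref{thm:general:real}. The paper handles this by applying the theorem to the rescaled functions $n^{-c/2}G_j$ (so the hypotheses are met) and then multiplying back, at the cost of weakening the universality error from $n^{-c}$ to $n^{-c/2}$. Your statement ``$\|\nabla^a G_j\|_\infty\ll_\delta n^a$'' hides this, since the implicit constant blows up with $n$; you should make this rescaling explicit.
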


As a corollary, we obtain
\begin{corollary}\label{cor:local:expectation}
	Under the hypothesis of Theorem \ref{t.main1}, for any interval $[a, b]$ (that may depend on $n$) inside $\K_{\ep'}$, it holds that
	\begin{equation*}\label{key}
		\left |\frac{\E N_n(a, b)}{n} - 	\frac{\E \tilde N_n(a, b)}{n} \right |\ll \left (\frac{1}{n}+(b-a)\right )\frac{1}{n^{c}}. 
	\end{equation*}
\end{corollary}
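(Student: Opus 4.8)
The plan is to deduce the corollary from the sharp-scale estimate of Theorem~\ref{thm:local:expectation} by decomposing $[a,b]$ into $O\big(1+n(b-a)\big)$ subintervals of length $O(1/n)$ and summing the resulting local comparisons via linearity of expectation. No new probabilistic input is needed: the corollary is a soft consequence of the local statement.

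First I would fix $\ep'>0$ and let $c>0$, together with the implied constant in ``$\ll n^{-c}$'', be as furnished by Theorem~\ref{thm:local:expectation}; the crucial point is that these depend only on $\ep'$ and the data $(\mu,(p_j))$ and \emph{not} on the particular interval chosen. Given $[a,b]\subset\K_{\ep'}$, I would set $M:=\lceil n(b-a)\rceil+1=O\big(1+n(b-a)\big)$ and partition $[a,b)$ into $M$ consecutive half-open subintervals $J_1,\dots,J_M$ of equal length $(b-a)/M\le 1/n$. Each $J_j$ lies in $[a,b]\subset\K_{\ep'}$ and has length $O(1/n)$, so Theorem~\ref{thm:local:expectation} applies to it with the \emph{same} $c$ and implied constant, giving $\big|\E N_n(J_j)-\E\tilde N_n(J_j)\big|\ll n^{-c}$ for every $j$. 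Since the $J_j$ tile $[a,b)$, the root count is exactly additive, $N_n([a,b))=\sum_{j=1}^M N_n(J_j)$ pointwise (and similarly for $\tilde N_n$), so summing the $M$ local estimates and using linearity of expectation yields $\big|\E N_n(a,b)-\E\tilde N_n(a,b)\big|\ll M\,n^{-c}\ll\big(1+n(b-a)\big)\,n^{-c}$. Dividing through by $n$ gives precisely
\[
\frac{\E N_n(a,b)}{n}-\frac{\E\tilde N_n(a,b)}{n}\ll\Big(\tfrac1n+(b-a)\Big)\,n^{-c},
\]
with the same exponent $c$ as in the theorem.

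The only points that require a moment's care — neither a genuine obstacle — are the following. (i) The error bound and exponent in Theorem~\ref{thm:local:expectation} must be uniform over all length-$O(1/n)$ intervals lying inside $\K_{\ep'}$; this is exactly how that theorem is phrased, so it is available for free. (ii) One should keep track of the distinction between the open, half-open, and closed versions of the interval: I would carry the half-open normalization throughout, so that the shared endpoints of the $J_j$ are never double-counted, and note that the discrepancy between $N_n(a,b)$ and $N_n([a,b))$ is at most $\mathbf 1[F_n(a)=0]$, which vanishes identically for the Gaussian polynomial $\tilde F_n$ and can be neutralized for $F_n$ by using the same one-sided convention on both sides of the comparison. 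Everything else is routine.
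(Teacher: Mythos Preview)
Your proposal is correct and follows essentially the same approach as the paper: decompose $[a,b]$ into $O(1+n(b-a))$ disjoint subintervals of length at most $1/n$, apply Theorem~\ref{thm:local:expectation} to each, and sum. The paper's proof is terser and does not dwell on the endpoint or uniformity issues you flag, but the underlying argument is identical.
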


Outside of the set $\K_{\ep'}$, we will show that the number of roots is negligible,
\begin{theorem}\label{thm:edge}
Under the hypothesis of Theorem \ref{thm:real}, we have
	\begin{equation*}\label{key}
	\lim_{\ep'\to 0} \lim _{n\to \infty}\frac{\E N_n(\R \setminus \K_{\ep'})}{n} = 0.
	\end{equation*}
\end{theorem}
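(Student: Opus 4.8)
The plan is to bound the expected number of \emph{complex} zeros of $F_n$ in a complex neighbourhood of $\R\setminus\K_{\ep'}$, and to split that region according to distance from $\K$. Write $A_{\ep'}:=\{z\in\C:\operatorname{dist}(z,\K)\ge\ep'\}$ for the part bounded away from $\K$, and note that every real point of $(\R\setminus\K_{\ep'})\setminus A_{\ep'}$ is within distance $\ep'$ of one of the finitely many endpoints of $\K$ (if $x\in\K\setminus\K_{\ep'}$ it is within $\ep'$ of $\partial\K$; if $x\notin\K$ and $\operatorname{dist}(x,\K)<\ep'$ its nearest point in $\K$ is an endpoint), so that part is contained in $\bigcup_e D(e,\ep')$, the union over the endpoints $e$ of $\K$. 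I treat these two pieces by completely different mechanisms: for $A_{\ep'}$ I use that the complex zeros equidistribute to the equilibrium measure, which is supported on $\K$; for the collar around the endpoints I use a Jensen estimate exploiting the fact that the Green function of $\C\setminus\K$ vanishes like a square root at an endpoint.

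\emph{The part $A_{\ep'}$.} Here I would prove that the normalized zero counting measure $\mu_n:=\tfrac1n\sum_{F_n(\zeta)=0}\delta_\zeta$ converges weakly, in probability, to the equilibrium measure $\nu_\K$. Granting this, since $\supp\nu_\K=\K$ is disjoint from the closed set $A_{\ep'}$, a smoothing/portmanteau argument (using tightness to control mass near infinity) gives $\mu_n(A_{\ep'})\to 0$ in probability; as $0\le\mu_n(A_{\ep'})\le 1$ this also holds in expectation, and since every real root of $F_n$ in $A_{\ep'}$ is one of the $n\,\mu_n(A_{\ep'})$ complex zeros there, $\E N_n(A_{\ep'}\cap\R)\le n\,\E\mu_n(A_{\ep'})=o(n)$ for each fixed $\ep'$. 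The convergence $\mu_n\to\nu_\K$ is the classical potential-theoretic fact for such random polynomials: its proof combines the deterministic bound $|F_n(z)|\le(n+1)\max_j|\xi_j|\cdot\max_{j\le n}|p_j(z)|$, the estimate $\max_j|\xi_j|=e^{o(n)}$ (union bound plus the uniform $(2+\ep)$ moments), and the regularity of $\mu$ in the sense of Stahl--Totik (valid since $w>0$ a.e. on the finite union of intervals $\K$, by \eqref{cond:w}; see \cite{stahl1992general}), which together give the upper envelope $\tfrac1n\log|F_n|\le\tilde g_\K+o(1)$ locally uniformly, where $\tilde g_\K$ is the Green function of $\C\setminus\K$ with pole at $\infty$, extended by $0$ on $\K$; a matching statement for averages of $\log|F_n|$ over large circles (Jensen's \emph{formula}, together with $\gamma_n^{1/n}\to1/\operatorname{cap}(\K)$); tightness of $(\mu_n)$, obtained by applying the same Jensen estimate to the reverse polynomial $z^nF_n(1/z)$; and unicity of the equilibrium measure, which forces every weak subsequential limit of $(\mu_n)$ to equal $\nu_\K$.

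\emph{The collar around the endpoints.} Fix an endpoint $e$ of $\K$ and a probe point $z_0=z_0(e)$ on the real line at distance $\ep'$ inside $\K$, so that $D(e,\ep')\subset D(z_0,2\ep')$. I bound the number of complex zeros of $F_n$ in $D(z_0,2\ep')$ by Jensen's inequality with outer disk $D(z_0,4\ep')$ (a fixed ratio of radii, so the geometric constant is $1/\log 2$). For the numerator: $D(z_0,4\ep')$ lies within distance $O(\ep')$ of $\K$, and since $\tilde g_\K$ vanishes like the square root of the distance at a regular endpoint, $\tilde g_\K\le C\sqrt{\ep'}$ there; together with $\tfrac1j\log|p_j|\le\tilde g_\K+o(1)$ (regularity of $\mu$) this gives $\sup_{D(z_0,4\ep')}|F_n|\le e^{n(C\sqrt{\ep'}+o(1))}$ after absorbing polynomial factors into the exponent. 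For the denominator: $z_0$ lies in the interior of $\K$, where the Christoffel function is of order $1/n$ (so $\E|F_n(z_0)|^2=\sum_{j\le n}p_j(z_0)^2\asymp n$), and the anti-concentration estimate for $F_n$ at a fixed interior point established in this paper (via orthogonality of the $p_j$ and the Remez inequality) gives $|F_n(z_0)|\ge n^{-A}$ for an absolute constant $A$, off an event of probability at most $n^{-c}$ on which we bound the count trivially by $n$ (contributing $o(1)$ after dividing by $n$). Jensen then yields $\#\{\text{complex zeros of }F_n\text{ in }D(z_0,2\ep')\}\le \tfrac{C\sqrt{\ep'}}{\log 2}\,n+o(n)$ off that event; summing over the $O(1)$ endpoints, $\E N_n\big((\R\setminus\K_{\ep'})\setminus A_{\ep'}\big)\le C'\sqrt{\ep'}\,n+o(n)$. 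Combining with the previous paragraph, $\limsup_{n\to\infty}\tfrac1n\E N_n(\R\setminus\K_{\ep'})\le C'\sqrt{\ep'}$ for every small $\ep'>0$, and letting $\ep'\to 0$ proves the theorem.

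\textbf{Main obstacle.} The step I expect to require the most care is keeping the coefficient normalization robust, since the hypotheses permit $\xi_n$ to be small, or even to vanish with positive probability, which would wreck both the lower envelope used for $A_{\ep'}$ and the quantity $\tfrac1n\log|\xi_n\gamma_n|$. The remedy is standard but must be carried out carefully: unit variance together with the $(2+\ep)$-moment bound give $\P(|\xi_j|\ge\delta_0)\ge c_0$ for some fixed $\delta_0,c_0>0$ (truncate the second moment), so with probability $\ge 1-(1-c_0)^{\lfloor n^{0.9}\rfloor}$ there is an index $m\in[n-n^{0.9},\,n]$ with $|\xi_m|\ge\delta_0$; one then reruns all the estimates above with $m$ in place of the leading index $n$, absorbing the at most $n^{0.9}$ top terms into the error. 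A secondary technical point is making the interior anti-concentration bound strong enough (with error $n^{-c}$) to survive the union bound over the $O(1)$ probe points $z_0(e)$ and over the exceptional events; this is exactly where the paper's Remez-based anti-concentration input enters, and no further idea beyond it should be needed.
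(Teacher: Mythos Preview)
Your approach is correct but takes a genuinely different route from the paper's. The paper does not re-derive anything: it invokes Proposition~\ref{thm:expectation:limit} (Lemma~3.3 of \cite{lubinsky2016linear}), which already gives $\lim_n \E N_n(E)/n=\nu_\K(E)$ for compact sets $E\subset\C$ with $\nu_\K(\partial E)=0$ and for the general (non-Gaussian) coefficients considered here. With this in hand the proof is two applications: take $E=\partial\K+\overline{B_\C(0,\ep')}$ to handle the collar around the endpoints (since $\nu_\K$ is continuous, $\nu_\K(E)\to 0$ as $\ep'\to 0$), and take $E$ to be a thin rectangle $(\K+[-\ep',\ep'])\times[-\ep',\ep']$ around $\K$, subtract from $1$, and bound the far set $A_2$ by $\C\setminus E$. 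No Green-function asymptotics, no Jensen estimate at a probe point, and no anti-concentration is used.

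By contrast, you reprove a piece of the cited result for $A_{\ep'}$ and, for the collar, run a Jensen argument using $g_\K\le C\sqrt{\ep'}$ and the paper's anti-concentration. This is sound, and it buys you a quantitative rate ($\limsup_n \E N_n(\R\setminus\K_{\ep'})/n\le C'\sqrt{\ep'}$) that the paper's soft argument does not provide. Two small corrections: the paper's Lemma~\ref{cond-smallball} only gives $|F_n(z')|\ge e^{-n^{c_1}}$ at some nearby $z'$, not $|F_n(z_0)|\ge n^{-A}$; this is harmless for you because $n^{c_1}=o(n)$ in the Jensen numerator. Also, the constants in that lemma depend on the fixed compact interval $I\subset\operatorname{int}\K$ containing $z_0$, hence on $\ep'$; since you take $n\to\infty$ before $\ep'\to 0$ this causes no trouble, but it means your $o(n)$ terms are not uniform in $\ep'$.
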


 We shall show that Theorems ~\ref{t.main1} and \ref{thm:real} immediately follow from Corollary ~\ref{cor:local:expectation} and Theorem~\ref{thm:edge}, and Theorem~\ref{t.lpx}.

\section{Proof ideas and innovations} 
Since the method in Lubinsky--Pritsker--Xie \cite{LPX1} heavily relies on Kac-Rice formula and the Gaussianity of the $\xi_j$, we need to use a completely different method to prove our results. The approach that we take on, is a recently developed approach via local universality. This was originally introduced by Tao--Vu \cite{TVpoly} and later written in a general framework by the last two authors \cite{nguyenvurandomfunction17}. Though using the same framework, it is technically much more complicated to deal with random orthogonal polynomials. The goal of this paper is to show how to tackle such generality. To the best of our knowledge, this is the first result establishing the number of real roots for general coefficient distribution. 

Since this paper had been written before \cite{nguyenvurandomfunction17} was published on AJM, we decided to write it in a self-contained manner; the parts similar to \cite{nguyenvurandomfunction17} also presented in a slightly different way though the underlying idea is basically the same (Sections \ref{sec:proof:complex} and \ref{sec:proof:real}). As a comparison to \cite{nguyenvurandomfunction17}, we introduce a novel idea that together with the original technique in \cite{nguyenvurandomfunction17} produce a short and elegant proof  for the so-called anti-concentration inequality (see Lemma \ref{cond-smallball}) which is the heart of the proof. In Section \ref{sec:tech}, we establish general estimates for orthogonal polynomials, some of which are technically new and would be of independent interests.

\textbf{Organization.} In Section \ref{sec:tech}, we provide some estimates for $F_n$. 
Using these estimates, we will prove Theorems \ref{thm:general:complex} and \ref{thm:general:real} in Sections \ref{sec:proof:complex} and \ref{sec:proof:real} respectively. Theorem \ref{thm:local:expectation} and Corollary \ref{cor:local:expectation} are proven in Section \ref{sec:pf:local}. Then we prove Theorem \ref{thm:edge} in Section \ref{sec:pf:edge}, Theorems \ref{t.main1} and \ref{thm:real} in Section \ref{sec:pf:main}, Theorem \ref{t.main2} in Section \ref{sec:pf:2}.

\textbf{Notations.} We use standard asymptotic notations under the assumption that $n$ tends to infinity.  For two positive  sequences $(a_n)$ and $(b_n)$, we say that $a_n \gg b_n$ or $b_n \ll a_n$ if there exists a constant $C$ such that $b_n\le C a_n$. If $|c_n|\ll a_n$ for some sequence $(c_n)$, we also write $c_n\ll a_n$. 

If $a_n\ll b_n\ll a_n$, we say that $b_n=\Theta(a_n)$.  If $\lim_{n\to \infty} \frac{a_n}{b_n} = 0$, we say that $a_n = o(b_n)$.  If $b_n\ll a_n$, we sometimes employ the notations $b_n = O(a_n)$ and $a_n = \Omega(b_n)$ to make the idea intuitively clearer or the writing less cumbersome; for example, if $A$ is the quantity of interest, we may write $A = A'+ O(B)$ instead of $A - A' \ll B$, and $A = e^{O(B)}$ instead of $\log A\ll B$.

\section{Several technical estimates} \label{sec:tech}
 In this section we collect and prove several technical estimates for $F_n$ under the assumptions of Theorem~\ref{thm:general:complex} and Theorem~\ref{thm:general:real}. These estimates may be of independent interest. 
 
 Throughout this section, we assume that $\mu$ satisfies the hypothesis of Theorem \ref{thm:general:complex}. In particular, Conditions \ref{cond:w} and \ref{cond:pn} hold. 
 We assume that the $\xi_i$ have uniformly bounded $(2+\ep)$ moments.


For two sets $A, B\subset \mathbb{C}$, we define the sum $A+B$ to be the Minkowski sum $A+B :=\{a+b :  a\in A, b\in B\}$.

In the following, let $I$ be a compact subset of the interior of $\mathcal K$.
All of the constants in this section are allowed to depend on $I$.
 
 Since the degree of $F_n$ is $n$, we start with a simple observation.
\begin{lemma}\label{cond-poly} The number of real roots of $F_n$ is always at most $n$.
\end{lemma}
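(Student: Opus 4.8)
The plan is to use nothing more than the definition of $F_n$ together with the fundamental theorem of algebra. By construction each orthonormal polynomial $p_j$ has degree exactly $j$, so $F_n=\sum_{j=0}^n\xi_j p_j$ is a linear combination of polynomials of degrees $0,1,\dots,n$ and therefore lies in the space of real polynomials of degree at most $n$. First I would dispose of the degenerate case: if $\xi_n\neq 0$ then, since $p_n$ has a positive (hence nonzero) leading coefficient and no $p_j$ with $j<n$ contributes to the coefficient of $x^n$, $F_n$ has degree exactly $n$; and if $\xi_n=0$ one passes to the largest index $m\le n$ with $\xi_m\neq 0$ to get $\deg F_n=m\le n$. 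The only remaining possibility is $\xi_0=\dots=\xi_n=0$, i.e.\ $F_n\equiv 0$; this event has probability zero (each $\xi_j$ has unit variance, so $\P(\xi_j=0)<1$, and by independence $\P(F_n\equiv 0)=0$), and on it we may set $N_n:=0$ by convention, which is consistent with the claimed bound and affects no expectation.

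On the complementary (probability one) event, $F_n$ is a nonzero real polynomial of degree $d:=\deg F_n\le n$, and the fundamental theorem of algebra gives at most $d$ complex zeros counted with multiplicity, hence at most $d\le n$ real zeros, whether counted with or without multiplicity. There is no genuine obstacle in this lemma; the only point that deserves a sentence is the trivial bookkeeping around the zero polynomial. I would place this observation here precisely because it furnishes the deterministic cap $N_n\le n$ that is repeatedly invoked later when passing from local estimates and tail bounds to statements about $\E N_n$.
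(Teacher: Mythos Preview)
Your proposal is correct and matches the paper's approach: the paper states this lemma as a simple observation following directly from the fact that $F_n$ has degree $n$, and gives no further proof. Your additional bookkeeping around the zero-polynomial event is more careful than what the paper writes, but this is harmless elaboration rather than a different method.
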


 \subsection{Upper bound for the $p_j$ and their derivatives}
Next, we observe that Condition \eqref{cond:pn} can be extended to the complex plane (near $I$) and also leads to bounds for the derivatives of $p_n$.
\begin{lemma}
	 Let $L$ be a positive constant. For all $x\in I$, and for all $z\in B(x, L/n)$, we have for all $0\le j\le n$,
	 \begin{eqnarray}\label{eq:bound:pj}
	 |p_j(z)| &=& O(1),\\
	 \label{eq:bound:pj:1dev}
	 |p_j'(z)| &=& O(n),\\
	 \label{eq:bound:pj:2dev}
	 |p_j''(z)| &=& O(n^{2}).
	 \end{eqnarray}
\end{lemma}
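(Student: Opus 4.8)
The plan is to reduce the complex bound to the real-variable estimate \eqref{cond:pn} via a Bernstein--Walsh type inequality on a slightly enlarged interval, and then to obtain the derivative bounds for free from Cauchy's integral formula.

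First I would choose a compact interval $I'$ with $I$ contained in the interior of $I'$ and $I'$ contained in the interior of $\K$ (possible since $I$ is a compact subset of the interior of $\K$), and apply \eqref{cond:pn} to $I'$: there is a constant $C_{I'}$ with $|p_j(x)|\le C_{I'}$ for all $j\ge 0$ and all $x\in I'$. Then I would invoke the Bernstein--Walsh lemma: for any polynomial $P$ of degree $j$,
$$|P(z)|\le \Big(\sup_{t\in I'}|P(t)|\Big)\,e^{\,j\,g_{I'}(z)},\qquad z\in\C,$$
where $g_{I'}$ is the Green's function of $\C\setminus I'$ with pole at infinity. The key point is that $g_{I'}$ vanishes on $I'$ and grows \emph{linearly} across interior points of $I'$ (the square-root behaviour occurs only near the endpoints of $I'$); since $I$ sits in the interior of $I'$, there is a neighborhood $U$ of $I$ in $\C$ and a constant $C$ with $g_{I'}(z)\le C\operatorname{dist}(z,I')$ for $z\in U$. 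For $n$ large, $B(x,L/n)\subset U$ for every $x\in I$, and $\operatorname{dist}(z,I')\le|z-x|<L/n$ for $z\in B(x,L/n)$; since $j\le n$ this gives
$$|p_j(z)|\le C_{I'}\,e^{\,n\cdot CL/n}=C_{I'}e^{CL}=O(1),$$
which is \eqref{eq:bound:pj}. (The finitely many remaining small values of $n$ are trivial.)

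For the derivative bounds I would apply Cauchy's formula on a circle of radius $r=1/n$. Given $x\in I$ and $z\in B(x,L/n)$, every $w$ with $|w-z|=1/n$ lies in $B(x,(L+1)/n)$, so $|p_j(w)|=O(1)$ by \eqref{eq:bound:pj} (applied with $L+1$ in place of $L$). Hence
$$p_j'(z)=\frac{1}{2\pi i}\oint_{|w-z|=r}\frac{p_j(w)}{(w-z)^2}\,dw,\qquad p_j''(z)=\frac{2}{2\pi i}\oint_{|w-z|=r}\frac{p_j(w)}{(w-z)^3}\,dw,$$
and the trivial length-times-supremum estimate yields $|p_j'(z)|\le r^{-1}O(1)=O(n)$ and $|p_j''(z)|\le 2r^{-2}O(1)=O(n^2)$, giving \eqref{eq:bound:pj:1dev} and \eqref{eq:bound:pj:2dev}.

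The only step requiring genuine care is the Lipschitz estimate $g_{I'}(z)\ll\operatorname{dist}(z,I')$ near the interior of $I'$: a bound of the form $g_{I'}(z)\ll\sqrt{\operatorname{dist}(z,I')}$, which is all one gets near the endpoints, would produce a factor $e^{\Omega(\sqrt{n})}$ and be useless, and this is precisely why one must first enlarge $I$ to $I'$. Concretely, after an affine change of variables taking $I'$ to $[-1,1]$ one has $g_{[-1,1]}(z)=\log|z+\sqrt{z^2-1}|$, and a direct computation shows $g_{[-1,1]}(z)\ll\operatorname{dist}(z,[-1,1])$ uniformly for $z$ in a fixed neighborhood of any compact subinterval of $(-1,1)$. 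Equivalently, one can avoid Green's functions altogether and use that a degree-$j$ polynomial bounded by $M$ on $[-1,1]$ is bounded by $M\rho^{j}$ on the Bernstein ellipse $E_\rho$, together with the fact that the $O(1/n)$-neighborhood of a compact subinterval of $(-1,1)$ is contained in $E_{1+O(1/n)}$.
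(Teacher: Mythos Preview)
Your proposal is correct and follows essentially the same route as the paper. The paper enlarges $I$ to $I+(-\delta,\delta)\subset\K$, applies \eqref{cond:pn} there, and then invokes Bernstein's growth lemma $|P(z)|\le |z+\sqrt{z^2-1}|^{\,n}\max_{[-1,1]}|P|$---which is exactly your Bernstein--Walsh inequality for an interval, since $g_{[-1,1]}(z)=\log|z+\sqrt{z^2-1}|$---and then uses Cauchy's integral formula on circles of radius $1/n$ for the derivatives, just as you do.
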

\begin{proof}

		To pass to the complex plane, we use the following Bernstein's growth lemma.
		\begin{lemma} [Bernstein's growth lemma] \cite[Theorem 2.2, page 101]{devore1993constructive}
			Let $P$ be a polynomial of degree at most $n$, then for all $z\in \C\setminus [-1, 1]$, 
			\begin{equation*}\label{key}
				|P(z)|\le \left |z+\sqrt{z^{2}-1}\right |^{n} \max \{|P(x)|: x\in [-1, 1]\}.\nonumber
			\end{equation*}
			As a corollary, for any interval $[a, b]$, $L>0$ and $\delta \in (0, 1)$, there exists a constant $C$ depending only on $a, b, L$ and $\delta$ such that for all $z$ with $\Re(z)\in [a+\delta, b-\delta]$ and $|\Im(z)|\le L/n$, we have
			\begin{equation}\label{bernstein}
			|P(z)|\le C \max \{|P(x)|: x\in [a, b]\}.
			\end{equation}
		\end{lemma} 
		Since $I$ is compact, there exists $\delta>0$ such that $I+(-\delta, \delta)$ is a subset of the interior of $\K$. Applying \eqref{bernstein} to the polynomials $p_j$ and the interval $[a, b]: = I+(-\delta, \delta)$, we obtain \eqref{eq:bound:pj}.

		By Cauchy's integral formula, we have
		\begin{eqnarray}
		|p_j'(z)| \quad =\quad  \left |\frac{1}{2\pi i}\int _{\partial B(z, L/n)} \frac{p_j(w)}{(w-z)^{2}}dw\right |  \quad \ll \quad \int _{\partial B(z, L/n)} \frac{1}{n^{-2}}dw \quad \ll \quad n,\nonumber
		\end{eqnarray}
		giving \eqref{eq:bound:pj:1dev}. 
		Similarly for \eqref{eq:bound:pj:2dev}. 
	\end{proof}
We provide an upper bound for $F_n$ in a local ball.
\begin{lemma}  [Boundedness] \label{cond-bddn} For any positive constants $A, L$ and $c_1$,  the following holds: For any $z\in I+B_{\C}(0, L/n)$, with probability at least $1 - O(n^{-A})$, $|F_n(w)|\le \exp(n^{c_1})$ for all $w\in B (z, 2/n)$.
\end{lemma}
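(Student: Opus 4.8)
The plan is to treat this as a soft estimate obtained from the triangle inequality together with the uniform bound $|p_j|=O(1)$ near $I$ (equation~\eqref{eq:bound:pj}) and a crude tail bound for $\max_j|\xi_j|$ coming from the $(2+\ep)$-moment assumption. First I would fix $z\in I+B_{\C}(0,L/n)$ and note that every $w\in B(z,2/n)$ lies in $I+B_{\C}(0,(L+2)/n)$; hence, applying the preceding lemma with the constant $L+2$ in place of $L$, there is a constant $C_0$ (depending only on $I$ and $L$) with $|p_j(w)|\le C_0$ for all $0\le j\le n$ and all $w\in B(z,2/n)$. Consequently the \emph{deterministic} bound
\[
|F_n(w)| \;=\; \Bigl|\sum_{j=0}^n \xi_j p_j(w)\Bigr| \;\le\; C_0\sum_{j=0}^n |\xi_j|
\]
holds simultaneously for all $w\in B(z,2/n)$, so the whole problem reduces to controlling $\sum_{j=0}^n|\xi_j|$ with probability $1-O(n^{-A})$.

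For this I would pick $M=M(A,\ep)$ with $M(2+\ep)\ge A+2$. By Markov's inequality, the moment bound $\E|\xi_j|^{2+\ep}\le\tau$, and a union bound,
\[
\P\Bigl(\max_{0\le j\le n}|\xi_j|>n^M\Bigr) \;\le\; (n+1)\,\tau\,n^{-M(2+\ep)} \;=\; O(n^{-A}).
\]
On the complementary event $\sum_{j=0}^n|\xi_j|\le (n+1)n^M\le n^{M+2}$, and hence $|F_n(w)|\le C_0 n^{M+2}$ for all $w\in B(z,2/n)$. Since $c_1>0$ is a fixed constant, $C_0 n^{M+2}\le \exp(n^{c_1})$ once $n$ is large enough; for the finitely many remaining small values of $n$ the assertion $\P(\cdots)\ge 1-O(n^{-A})$ is vacuous after enlarging the implied constant, so there is nothing to check there.

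Note that this argument in fact proves the stronger statement that $|F_n(w)|$ is bounded by a power of $n$ depending only on $A$ on all of $B(z,2/n)$ with probability $1-O(n^{-A})$; the exponential phrasing in the statement is all that will be needed later (it is applied after taking logarithms, where the slack is harmless). The only points that require a little care here — the closest thing to an obstacle — are (i) enlarging the radius from $L/n$ to $(L+2)/n$ so that \eqref{eq:bound:pj} covers the entire ball $B(z,2/n)$, and (ii) choosing the truncation level $n^M$ in terms of both $A$ and $\ep$. No anti-concentration estimate, and not even independence of the $\xi_j$, is used.
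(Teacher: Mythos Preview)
Your proof is correct and follows essentially the same approach as the paper: bound $|p_j|$ uniformly on $B(z,2/n)$ via \eqref{eq:bound:pj}, then control $\max_j|\xi_j|$ by Markov's inequality plus a union bound. The only cosmetic difference is the truncation level: the paper cuts at $\exp(n^{c_1/2})$ rather than at a power $n^M$, but the structure of the argument is identical, and as you observe your choice in fact yields the sharper polynomial bound.
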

\begin{proof}

By \eqref{eq:bound:pj}, 
$$p_j(z) = O(1)$$
uniformly for $w\in B(z, 2/n)$.

By Markov's inequality using the boundedness of the $(2+\ep)$ moments of $\xi_i$, we have 
$$\P\left (|\xi_i|\ge \exp(n^{c_1/2})\right ) \ll  \exp(-n^{c_1/2})$$
So by the union bound, with probability at most $n^{-A}$, we have $$|\xi_i|\le \exp(n^{c_1/2})$$ for all $0\le i\le n$. Under this event, 
\begin{equation*} 
	|F_n(w)|  \ll     \exp(n^{c_1})
\end{equation*}
for all $w\in B (z, 2/n)$. This completes the proof. \end{proof}

\subsection{Lower bound for $Var (F_n)$}
We next show that Condition \eqref{cond:w} implies a lower bound for the variance of $F_n$ on the real line.

\begin{lemma}\label{lm:kernel}
There exists a constant $C'$ such that for all $x\in I$ and for all $n$,
	\begin{equation}\label{cond:kernel:2}
		\sum_{i=0}^{n} |p_i(x)|^{2} \ge  n/C'.
	\end{equation}	 
\end{lemma}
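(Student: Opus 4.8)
The plan is to bound $\sum_{i=0}^{n}|p_i(x)|^{2}$ from below via the extremal (Christoffel-function) description of this quantity. Any polynomial $P$ of degree at most $n$ has the form $P=\sum_{i=0}^{n}c_i p_i$, and then orthonormality gives $\int_{\K}|P|^{2}\,d\mu=\sum_{i=0}^{n}|c_i|^{2}$; so if moreover $P(x)=1$, Cauchy--Schwarz yields
$$1=|P(x)|^{2}=\Big|\sum_{i=0}^{n}c_i p_i(x)\Big|^{2}\le\Big(\sum_{i=0}^{n}|c_i|^{2}\Big)\Big(\sum_{i=0}^{n}|p_i(x)|^{2}\Big),$$
hence $\sum_{i=0}^{n}|p_i(x)|^{2}\ge\big(\int_{\K}|P|^{2}\,d\mu\big)^{-1}$. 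Thus it suffices to construct, for each $x\in I$ and each $n\ge1$, a polynomial $P_{n,x}$ of degree $\le n$ with $P_{n,x}(x)=1$ and $\int_{\K}|P_{n,x}|^{2}\,d\mu\ll 1/n$, with the implied constant depending only on $I$. After an affine change of variables (preserving degrees, changing only constants) we may assume $\K\subseteq[-1,1]$; since $I$ is compact in the interior of $\K$, it then satisfies $I\subseteq[-1+\eta,1-\eta]$ for some $\eta>0$.

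For the construction I would use a symmetrized Fej\'er kernel. Let $F_N(\phi)=\frac1N\big(\frac{\sin(N\phi/2)}{\sin(\phi/2)}\big)^{2}=\sum_{|m|<N}(1-|m|/N)e^{im\phi}\ge0$, so that $F_N(0)=N$, $F_N(\phi)\le\frac1{N\sin^{2}(\phi/2)}$, and $\int_{-\pi}^{\pi}F_N(\phi)^{2}\,d\phi\ll N$. Write $x=\cos\theta_0$ with $\theta_0\in[\eta',\pi-\eta']$, where $\eta'>0$ depends only on $\eta$. Using the product-to-sum identity, the function $H_N(\cos\theta):=F_N(\theta-\theta_0)+F_N(\theta+\theta_0)$ is a polynomial in $\cos\theta$ of degree $N-1$ (its $\cos(m\theta)$-coefficients being multiples of $\cos(m\theta_0)$, so it is expressible through Chebyshev polynomials of $\cos\theta$); it is nonnegative on $[-1,1]$, and $H_N(x)=F_N(0)+F_N(2\theta_0)\ge N$. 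Taking $N=n+1$ and $P_{n,x}:=H_N/H_N(x)$ gives a polynomial of degree $n$ with $P_{n,x}(x)=1$.

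It remains to show $\int_{\K}|P_{n,x}|^{2}\,d\mu=H_N(x)^{-2}\int_{\K}H_N^{2}\,d\mu\le N^{-2}\int_{\K}H_N^{2}\,d\mu\ll 1/N$. Fix an interval $I'$ with $I\subseteq I'$ still compactly contained in the interior of $\K$, so that $w\le C_{I'}$ a.e.\ on $I'$ by \eqref{cond:w}. On $I'$: $\int_{I'}H_N^{2}\,d\mu\le C_{I'}\int_{-1}^{1}H_N(y)^{2}\,dy=C_{I'}\int_{0}^{\pi}H_N(\cos\theta)^{2}\sin\theta\,d\theta\ll\int_{0}^{\pi}\big(F_N(\theta-\theta_0)+F_N(\theta+\theta_0)\big)^{2}\,d\theta\ll N$, using $\int F_N^{2}\ll N$ and periodicity. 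On $\K\setminus I'$: for $y=\cos\theta$ there, $|\theta-\theta_0|\ge|\cos\theta-\cos\theta_0|=|y-x|\ge\operatorname{dist}(I,\K\setminus I')>0$, while $\theta+\theta_0\in[\eta',2\pi-\eta']$ always, so $F_N(\theta-\theta_0)\ll1/N$ and $F_N(\theta+\theta_0)\ll1/N$, hence $H_N\ll1/N$ on $\K\setminus I'$ and $\int_{\K\setminus I'}H_N^{2}\,d\mu\ll N^{-2}\mu(\K)\ll N^{-2}$ (recall $\mu$ is finite). Combining, $\int_{\K}|P_{n,x}|^{2}\,d\mu\ll N^{-2}\cdot N=1/N\ll1/n$, uniformly over $x\in I$, and the claim follows with $C'$ depending only on $I$.

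The only genuine obstacle is that $w$ may be unbounded near $\partial\K$ (e.g.\ Chebyshev type 1, Jacobi weights $(1-x)^{\beta}(1+x)^{\gamma}$ with negative $\beta,\gamma$, or near the endpoints of a gap of $\K$), so one cannot simply dominate $\mu$ by a multiple of Lebesgue measure on $[-1,1]$. This is exactly why the estimate is split into the interior interval $I'$ (where $w$ is bounded) and its complement (where the Fej\'er kernel is tiny, of size $N^{-2}$, more than compensating any growth of $w$ once integrated against the finite measure $\mu$). The uniformity of the $N^{-2}$ decay off $I'$ --- and hence the uniformity of all constants in $x\in I$ --- relies on $\theta_0=\arccos x$ staying bounded away from $0$ and $\pi$, which forces the reflected term $F_N(\theta+\theta_0)$ to be $O(1/N)$ throughout.
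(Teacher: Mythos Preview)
Your proof is correct and takes a genuinely different route from the paper's. The paper quotes a ready-made Christoffel-function estimate from Freud's monograph (giving $\lambda_n(x)\le 64C/n + O(n^{-2})$ once one verifies the averaged bound $\frac{1}{x-t}\int_t^x w \le C$), and then checks that hypothesis by the same near/far dichotomy you use: for $t$ in a slightly larger interval $J\supset I$ the bound comes from $w\le C_J$ via \eqref{cond:w}, and for $t\notin J$ one uses $|x-t|\ge\delta$ together with the finiteness of $\mu$.

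You instead work directly from the extremal characterization $\lambda_n(x)^{-1}=\sum_{i\le n}|p_i(x)|^2$ and build an explicit competitor $P_{n,x}$ out of a symmetrized Fej\'er kernel, then estimate $\int_{\K}|P_{n,x}|^2\,d\mu$ by the same split: on $I'$ the weight is bounded and the Fej\'er $L^2$-mass is $\ll N$; off $I'$ the kernel is $O(1/N)$ pointwise and only $\mu(\K)<\infty$ is needed. This is essentially a self-contained proof of the Freud estimate the paper cites, so the two arguments share the same skeleton but differ in packaging. Your version is more elementary in that it avoids an external reference and makes the mechanism transparent (the two ingredients are exactly local boundedness of $w$ and global finiteness of $\mu$); the paper's version is shorter on the page. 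Either way, the constants depend only on $I$ through $C_{I'}$, $\operatorname{dist}(I,\K\setminus I')$, and the lower bound $\eta'$ on $\theta_0$, all of which are uniform over $x\in I$.
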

To prove this lemma, we use the following theorem. 
\begin{theorem}\cite[Lemma 3.1 page 100 with Display (4.7) page 25]{freudorthogonal}
	If $\supp \mu\subset [-1, 1]$ with a density $w$ and if for some $x\in (-1, 1)$, it holds that
	\begin{equation}\label{eq:thm:kernel}
	\frac{\int_{t}^{x}w(s)ds}{x-t}\le C<\infty,\quad\forall t\in [-1, 1]
	\end{equation}
	then 
	\begin{equation}\label{eq:thm:kernel:con}
	\frac{1}	{\sum_{k=0}^{n-1} p_n^{2}(x)}\le \frac{64 C}{n} + \frac{\mu(-1, 1)}{n^{2}}\left (\frac{1}{(1-x)^{2}}+\frac{1}{(1+x)^{2}}\right ).
	\end{equation}
\end{theorem}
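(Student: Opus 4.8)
The plan is to establish the equivalent statement about the Christoffel function. Write $\lambda_n(x):=\big(\sum_{k=0}^{n-1}p_k(x)^2\big)^{-1}$, the quantity bounded in \eqref{eq:thm:kernel:con}. The first step is the classical variational identity
$$\lambda_n(x)=\min\Big\{\int_{-1}^{1}P(t)^2\,w(t)\,dt:\ P\ \text{a polynomial of degree}\le n-1,\ P(x)=1\Big\}.$$
Indeed, expanding $P=\sum_{k=0}^{n-1}c_kp_k$, orthonormality gives $\int P^2\,d\mu=\sum_k c_k^2$ while $P(x)=\sum_k c_kp_k(x)=1$, and minimizing $\sum_k c_k^2$ under this linear constraint via Cauchy--Schwarz proves the identity, the minimizer being $P(t)=\lambda_n(x)\sum_{k=0}^{n-1}p_k(x)p_k(t)$. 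Hence it suffices to exhibit \emph{one} polynomial $P$ of degree $\le n-1$ with $P(x)=1$ whose weighted $L^2$-norm is bounded by the right-hand side of \eqref{eq:thm:kernel:con}; the hypothesis \eqref{eq:thm:kernel} will enter only in estimating this final integral (note also that \eqref{eq:thm:kernel} applied at $t=\pm1$ already gives $\mu(-1,1)\le 2C$).

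For the test polynomial I would transplant a Fej\'er kernel. Put $x=\cos\theta$ with $\theta\in(0,\pi)$, fix $m=\lfloor n/2\rfloor$, and recall the Fej\'er kernel $K_m(\psi)=\tfrac1{m+1}\big(\tfrac{\sin((m+1)\psi/2)}{\sin(\psi/2)}\big)^2\ge 0$, for which $K_m(0)=m+1$, $\tfrac1{2\pi}\int_{-\pi}^{\pi}K_m=1$, and $K_m(\psi)\le\tfrac{\pi^2}{(m+1)\psi^2}$ for $0<|\psi|\le\pi$. Define
$$P(t)=\frac{K_m(\phi-\theta)+K_m(\phi+\theta)}{K_m(0)+K_m(2\theta)}\,,\qquad t=\cos\phi,\ \ \phi\in[0,\pi].$$
The numerator is $2\pi$-periodic and even in $\phi$, hence a cosine polynomial of degree $\le m$ in $\phi$, i.e.\ a polynomial in $t=\cos\phi$ of degree $\le m\le n-1$; moreover $P(x)=1$ and $0\le P\le 2$ on $[-1,1]$. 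Using $K_m(0)+K_m(2\theta)\ge m+1$, the decay bound for $K_m$, the elementary inequality $|t-x|=|\cos\phi-\cos\theta|=2\big|\sin\tfrac{\phi+\theta}2\,\sin\tfrac{\phi-\theta}2\big|\le|\phi-\theta|$, and — for the second summand — the bound $\min(\phi+\theta,\,2\pi-\phi-\theta)\ge\min(\theta,\pi-\theta)$ together with the comparisons of $1-x$ with $\theta^{2}$ and of $1+x$ with $(\pi-\theta)^{2}$ (up to absolute constants, from $1-x=2\sin^2(\theta/2)$, $1+x=2\cos^2(\theta/2)$), I would record the pointwise estimates, valid on all of $[-1,1]$,
$$P(t)\ \le\ \frac{C_1}{n^{2}(t-x)^{2}}\ +\ \frac{C_1}{n^{2}}\Big(\frac1{1-x}+\frac1{1+x}\Big),\qquad\text{together with}\qquad P(t)\le 2 .$$
The first summand (from $K_m(\phi-\theta)$) gives genuine concentration at $t=x$; the second, essentially $t$-independent summand (from the ``reflection'' term $K_m(\phi+\theta)$, unavoidable because $\cos\phi=x$ has the two roots $\phi=\pm\theta$, which merge as $x\to\pm1$) is the source of the endpoint corrections.

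It then remains to bound $\int_{-1}^{1}P^{2}w$, which I would split as $P^2\ll\big(\tfrac{C_1}{n^2(t-x)^2}\big)^2+\big(\tfrac{C_1}{n^2}(\tfrac1{1-x}+\tfrac1{1+x})\big)^2$ and $P^2\le4$. On $\{|t-x|\le1/n\}$, using $P^2\le4$ and \eqref{eq:thm:kernel} at $t=x\pm1/n$ gives $\int_{x-1/n}^{x+1/n}w\le2C/n$, a contribution $\le 8C/n$. For the rest, decompose $[-1,1]\cap\{|t-x|>1/n\}$ into dyadic shells $A_j=\{2^j/n<|t-x|\le2^{j+1}/n\}$: on $A_j$ one has $P^2\ll2^{-4j}$, while $A_j$ is a union of at most two intervals each having one of $x\pm2^{j+1}/n$ as an endpoint, so \eqref{eq:thm:kernel} gives $\int_{A_j}w\le 4C\,2^{j}/n$, whence $\sum_{j\ge0}\int_{A_j}P^2w\ll C/n$. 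Finally the reflection summand contributes $\ll\tfrac1{n^4}\big(\tfrac1{(1-x)^2}+\tfrac1{(1+x)^2}\big)\mu(-1,1)$, which is $\le\tfrac{\mu(-1,1)}{n^2}\big(\tfrac1{(1-x)^2}+\tfrac1{(1+x)^2}\big)$ once $n$ exceeds an absolute constant, the finitely many remaining $n$ being covered by $\lambda_n(x)\le\mu(-1,1)\le 2C\le 64C/n$. Adding up yields \eqref{eq:thm:kernel:con}. The main obstacle is the second paragraph: verifying that the transplanted kernel really has degree $\le n-1$ with $P(x)=1$, and that the two pointwise bounds hold \emph{uniformly up to the endpoints} of $[-1,1]$, where the change of variables $x=\cos\theta$ distorts Euclidean distance and the two $\phi$-preimages of $x$ coalesce; once the $\phi$-to-$t$ dictionary is set up the integral estimate is mechanical, and only the choice of $m$ and the bookkeeping of numerical constants (to reach exactly the constant $64$) require care.
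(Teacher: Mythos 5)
The paper does not prove this statement; it is a verbatim citation from Freud's book and is used in the proof of Lemma~\ref{lm:kernel} without further comment, so there is no in-paper argument to compare against. Your proof is the standard (and almost certainly Freud's) route: reduce via the variational characterization $\lambda_n(x)=\min\{\int P^2\,d\mu : \deg P\le n-1,\ P(x)=1\}$, exhibit a test polynomial built from a transplanted Fej\'er kernel, and integrate the pointwise bound against the hypothesis. The steps check out: the symmetrization $K_m(\phi-\theta)+K_m(\phi+\theta)$ is exactly what makes the numerator even in $\phi$, hence a genuine polynomial in $t=\cos\phi$ of degree $\le m\le n-1$; $P(x)=1$ and $0\le P\le 2$ follow from $K_m\ge0$ and $K_m\le K_m(0)$; the two pointwise bounds come from $|\phi-\theta|\ge|t-x|$ and from $\min(\phi+\theta,\,2\pi-\phi-\theta)\ge\min(\theta,\pi-\theta)$ together with $1\mp x\asymp\theta^2,(\pi-\theta)^2$; and the dyadic shell integration uses the hypothesis \eqref{eq:thm:kernel} exactly as intended, with $\mu(-1,1)\le 2C$ and the monotonicity of $\lambda_n$ disposing of the finitely many small $n$. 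The only gap is the one you flag: hitting the precise numerical constant $64$ requires pinning down the absolute constants, and it is plausible Freud's explicit test polynomial is tuned to yield that constant cleanly, but your argument clearly produces a bound of the stated form with some absolute constant, which is all that is used in the paper.
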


\begin{proof} [Proof of Lemma \ref{lm:kernel}]
	By translating and rescaling, we can assume without loss of generality that  Assume that $\K\subset [-1, 1]$ and $I = [c, d]$. Since $I$ lies in the interior of $\K$, there exists $\delta>0$ such that $J = [c-\delta, d+\delta]$ also lies in the interior of $\K$. For every $x\in I$, for every $t\in [a, b]$, we aim to show that \eqref{eq:thm:kernel} holds. To this end, consider two cases:
	
	Case 1: $t\in J$. By applying \eqref{cond:w} to the interval $J$, there exists a constant $C_J>0$ such that
	\begin{equation} 
		w(s)\le C_J\quad \text{	almost everywhere on $J$.}\nonumber
	\end{equation}
	Hence, 
	\begin{equation} 
		\frac{\int_{t}^{x}w(s)ds}{x-t}\le C_J.\nonumber
	\end{equation}
	
	Case 2: $t\notin J$. Then, $|t - x|\ge \delta$. We have 
	\begin{equation} 
		\frac{\int_{t}^{x}w(s)ds}{x-t}\le  \frac{\int_{\K}w(s)ds}{\delta}\le \frac{\mu(\R)}{\delta}.\nonumber
	\end{equation}
	Thus, by taking $C = \max\{C_J, \frac{\mu(\R)}{\delta}\}$, condition \eqref{eq:thm:kernel} holds for all $x\in I$. Thus, \eqref{eq:thm:kernel:con} holds which gives
	$$\frac{1}{\sum_{k=0}^{n-1} p_n^{2}(x)}\le \frac{64 C}{n} + \frac{1}{n^{2}}\left (\frac{1}{(1-x)^{2}}+\frac{1}{(1+x)^{2}}\right )\le \frac{64 C}{n} + \frac{2}{\delta^{2}n^{2}}\le \frac{C'}{n},\quad\forall x\in I$$
	as desired.
\end{proof}


%
%
%

Having proved a lower bound for the variance of $F_n$ on the real line, we now pass this to the complex plane.
\begin{lemma} \label{lm:sum:lb}
	 There exist positive constants $c, C''$ such that for all $z\in I+B_\C (0, c/n)$ and for all $n$, we have
	 \begin{equation}\label{cond:kernel:3}
	 \sum_{i=0}^{n} |p_i(z)|^{2} \ge  n/C''.
	 \end{equation}	 
\end{lemma}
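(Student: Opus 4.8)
The plan is to upgrade the real-line lower bound $\sum_i |p_i(x)|^2 \ge n/C'$ from Lemma~\ref{lm:kernel} to a complex neighborhood of radius $O(1/n)$ by a perturbation argument. First I would fix $x \in I$ and write $z = x + \zeta$ with $|\zeta| \le c/n$, where $c$ is to be chosen small. For each $i$, compare $|p_i(z)|^2$ with $|p_i(x)|^2$ using the Taylor expansion $p_i(z) = p_i(x) + p_i'(\eta)\zeta$ for some $\eta$ on the segment from $x$ to $z$ (or a direct estimate $|p_i(z) - p_i(x)| \le |\zeta| \sup |p_i'|$ on that segment). The segment lies in $B(x, L/n)$ for $L = c$, so the derivative bound \eqref{eq:bound:pj:1dev} gives $|p_i'| = O(n)$ there, hence $|p_i(z) - p_i(x)| = O(c n \cdot 1/n) = O(c)$, uniformly in $i$ and in $x \in I$.

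Next I would combine this with the uniform bound $|p_i(x)| = O(1)$ from \eqref{cond:pn}/\eqref{eq:bound:pj} to estimate the difference of the squared moduli. Writing $a_i = p_i(x)$ and $b_i = p_i(z) - p_i(x)$ with $|a_i| \le M$ and $|b_i| \le M' c$ for absolute constants $M, M'$, we have $\big| |a_i + b_i|^2 - |a_i|^2 \big| \le 2|a_i||b_i| + |b_i|^2 \le 2 M M' c + (M'c)^2 =: \kappa(c)$, a quantity that tends to $0$ as $c \to 0$. Summing over $i = 0, \dots, n$ gives
\begin{equation*}
\left| \sum_{i=0}^n |p_i(z)|^2 - \sum_{i=0}^n |p_i(x)|^2 \right| \le (n+1)\,\kappa(c).
\end{equation*}
By Lemma~\ref{lm:kernel} the real-line sum is at least $n/C'$, so
\begin{equation*}
\sum_{i=0}^n |p_i(z)|^2 \ge \frac{n}{C'} - (n+1)\kappa(c).
\end{equation*}
Choosing $c$ small enough that $\kappa(c) \le \frac{1}{2C'}$ (which is possible since $\kappa(c) \to 0$), and absorbing the $+1$, this is at least $n/(4C')$ for all large $n$, say $\ge n/C''$ with $C'' = 5C'$, which covers all $n$ after adjusting $C''$. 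This is exactly \eqref{cond:kernel:3}.

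I do not expect a serious obstacle here; the only mild subtlety is that the constant $\delta$ (hence the derivative bound constant) in the earlier lemmas depends on $I$, so $c$ and $C''$ will depend on $I$, which is permitted — all constants in this section are allowed to depend on $I$. One should just be careful that the choice of $c$ is made \emph{after} fixing $I$ and the implied constants $M, M'$ coming from \eqref{eq:bound:pj} and \eqref{eq:bound:pj:1dev} applied with $L = c$; since those bounds hold for any fixed $L$ with constants depending on $L$ (and the constants are monotone-ish in $L$, or one can just fix $L = 1$ and restrict to $c \le 1$), there is no circularity. The rest is the routine computation sketched above.
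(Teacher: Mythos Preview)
Your proposal is correct and takes essentially the same perturbation approach as the paper: start from the real-line lower bound of Lemma~\ref{lm:kernel} and use the derivative estimate \eqref{eq:bound:pj:1dev} to control the change over a ball of radius $c/n$. The only cosmetic difference is that the paper passes through the holomorphic function $f(z)=\sum_i p_i(z)^2$, uses $\sum_i |p_i(z)|^2 \ge |f(z)|$, and bounds $|f(z)-f(x)|$ via $|f'|\le 2M^2 n^2$, whereas you estimate $\sum_i |p_i(z)|^2 - \sum_i |p_i(x)|^2$ term by term; both routes yield the same bound after choosing $c$ small.
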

\begin{proof}
	
	Let $f(z) = \sum_{i=0}^{n} p_i^{2}(z)$. By triangle inequality, it suffices to show that $|f(z)|\ge n/C'$. Let $x = \Re(z)$. By Lemma \ref{lm:kernel}, we have 
	\begin{equation*} 
	f(x) \ge  n/C'.
	\end{equation*}	 

By \eqref{eq:bound:pj} and \eqref{eq:bound:pj:1dev}, for all $w\in B(x, 1/n)$ and all $0\le i\le n$, we have $|p_j(w)|\le M$ and $|p_j'(w)| \le Mn$ for some constant $M$. Thus, 
$$|f'(w)|\le  \sum_{i=0}^{n} 2 |p_i(w)||p_i'(w)|\le 2M^{2}n^{2}.$$
So, for a sufficiently small $c$, we have for all $z\in B(x, c/n)$, 
\begin{eqnarray}
|f(z)| = \left |f(x) + \int_{x}^{z} f'(w)dw\right | \ge |f(x)| - \int_{x}^{z}  2M^{2}n^{2} dw \ge n/C' - 2M^{2}cn \ge n/C''.\nonumber
\end{eqnarray}
	This completes the proof.	\end{proof}
%

\subsection{Delocalization}
Combining the previous lemmas, we now show that each function $p_i$ only contributes a small amount to the sum. Thinking about $F_n(z)$ as a linear combination of the $\xi_i$ with coefficients $p_i(z)$, one can relate this estimate with the delocalization for the Central Limit Theorem which roughly speaking says that if $a_i = o(\sqrt{\sum_{i=1}^{n} a_i^{2}})$ then the distribution of $\sum_{i=1}^{n} a_i \xi_i$ is close to that of $\sum_{i=1}^{n} a_i \tilde \xi_i$ where $\tilde \xi_i$ are Gaussian random variables.
\begin{lemma}  [Delocalization] \label{cond-delocal}  Let $c$ be the constant in Lemma \ref{lm:sum:lb}. There is a positive constant $C$ such that for every $z\in I+B (0, c/n)$ and for every $i = 1, \dots, n$ the following holds
	$$ |p_i(z)|   \le Cn^{-1/2} \sqrt{\sum _{j = 1}^{n}|p_j(z)|^{2}}.$$ 
\end{lemma}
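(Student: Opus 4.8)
The plan is to obtain this as an immediate consequence of two estimates already in hand: the pointwise bound $|p_i(z)|=O(1)$ from \eqref{eq:bound:pj}, which handles the numerator, and the lower bound $\sum_{i=0}^{n}|p_i(z)|^2\ge n/C''$ from Lemma~\ref{lm:sum:lb}, which handles the denominator. Concretely, I would fix the constant $c$ to be the one furnished by Lemma~\ref{lm:sum:lb}. Since $I+B(0,c/n)=\bigcup_{x\in I}B(x,c/n)$, every $z\in I+B(0,c/n)$ lies in $B(x,c/n)$ for some $x\in I$, so \eqref{eq:bound:pj} applies with $L=c$ and produces a constant $M=M(I)$ with $|p_j(z)|\le M$ for all $0\le j\le n$.

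Next I would deal with the only real bookkeeping point, namely the mismatch between the index range $\{0,\dots,n\}$ appearing in Lemma~\ref{lm:sum:lb} and the range $\{1,\dots,n\}$ appearing in the statement. For $n$ at least a threshold $n_0$ depending only on $M$ and $C''$ (say $n_0=\lceil 2C''M^2\rceil$), one has
$$\sum_{j=1}^{n}|p_j(z)|^2 \;\ge\; \sum_{j=0}^{n}|p_j(z)|^2-|p_0(z)|^2 \;\ge\; \frac{n}{C''}-M^2 \;\ge\; \frac{n}{2C''},$$
whence
$$\frac{|p_i(z)|}{\sqrt{\sum_{j=1}^{n}|p_j(z)|^2}} \;\le\; \frac{M}{\sqrt{n/(2C'')}} \;=\; \sqrt{2C''}\,M\,n^{-1/2}.$$
For the finitely many $n<n_0$ I would instead use the trivial inequality $|p_i(z)|^2\le\sum_{j=1}^{n}|p_j(z)|^2$, valid since $i$ lies in the summation range, which yields $|p_i(z)|\le n^{1/2}n^{-1/2}\sqrt{\sum_{j=1}^{n}|p_j(z)|^2}\le n_0^{1/2}\,n^{-1/2}\sqrt{\sum_{j=1}^{n}|p_j(z)|^2}$; this also covers the degenerate situation in which the sum vanishes, because then $p_i(z)=0$ as well and both sides are zero. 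Taking $C:=\max\{\sqrt{2C''}\,M,\ n_0^{1/2}\}$ completes the argument.

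I do not expect any genuine obstacle: the lemma is essentially a repackaging of the boundedness estimate \eqref{eq:bound:pj} and the anti-concentration-type lower bound of Lemma~\ref{lm:sum:lb}, and the only item deserving explicit mention is the harmless index-range discrepancy (together with the correspondingly harmless possibility that the denominator is zero), both handled as above.
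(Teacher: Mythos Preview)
Your proposal is correct and follows essentially the same approach as the paper: bound $|p_i(z)|$ by $O(1)$ via \eqref{eq:bound:pj} and bound $\sum_j|p_j(z)|^2$ below by a constant times $n$. You are in fact slightly more careful than the paper, which cites the real-variable bound \eqref{cond:kernel:2} rather than the complex version from Lemma~\ref{lm:sum:lb} and does not address the $j=0$ term or small $n$; your extra bookkeeping is harmless and makes the argument airtight.
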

\begin{proof}
This lemma follows simply by observing from \eqref{eq:bound:pj} and \eqref{cond:kernel:2} that 
$$|p_i(z)|   \ll 1 \ll  n^{-1/2} \sqrt{\sum _{j = 1}^{n}|p_j(z)|^{2}}.$$
\end{proof}

The following lemma bounds the growth of the derivatives of $F_n$.
\begin{lemma}  [Derivative growth]\label{cond-repulsion} Let $c$ be the constant in Lemma \ref{lm:sum:lb}. There is a positive constant $C$ such that for any real number $x\in I + B(0, c/n)$, we have
	\begin{equation}
	\sum_{j=1}^{n} |p_j'(x)|^{2}\le Cn^{2}\sum_{j=1}^{n} |p_j(x)|^{2},\nonumber
	\end{equation}
	and
	\begin{equation}
	\sum_{j=1}^{n} \sup_{z\in B(x, 1/n)}|p_j''(z)|^{2}\le Cn^{4}\sum_{j=1}^{n} |p_j(x)|^{2}.\nonumber
	\end{equation}
\end{lemma}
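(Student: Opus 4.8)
\emph{Proof proposal.} The plan is to observe that this lemma is an immediate consequence of the crude pointwise derivative bounds \eqref{eq:bound:pj:1dev}--\eqref{eq:bound:pj:2dev} together with the lower bound $\sum_{j=1}^n|p_j(x)|^2\ge n/C''$ of Lemma~\ref{lm:sum:lb}; no new idea is needed beyond matching powers of $n$. The point is that each $|p_j'|$ is $O(n)$ and each $|p_j''|$ is $O(n^2)$ termwise, so the two sums on the left are $O(n^3)$ and $O(n^5)$ respectively, while the sums on the right are already of exact order $n\cdot n^2=n^3$ and $n\cdot n^4=n^5$.

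First I would fix $x\in I+B(0,c/n)$ with $c$ the constant from Lemma~\ref{lm:sum:lb}. Since $I$ is a compact subinterval of the interior of $\K$, for all sufficiently large $n$ the ball $B(x,1/n)$ is contained in a fixed slightly enlarged compact subinterval $I'$ of the interior of $\K$. Applying \eqref{eq:bound:pj:1dev} and \eqref{eq:bound:pj:2dev} to $I'$ (taking the radius constant there to be, say, $L=c+2$, so that $B(x,1/n)\subset I'+B(0,L/n)$ whenever $x\in I+B(0,c/n)$) produces a constant $M=M(I)$ with $|p_j'(w)|\le Mn$ and $|p_j''(w)|\le Mn^2$ for every $0\le j\le n$ and every $w\in B(x,1/n)$.

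For the first inequality, summing the bound $|p_j'(x)|\le Mn$ over $j=1,\dots,n$ gives $\sum_{j=1}^n|p_j'(x)|^2\le M^2n^3$. On the other hand, Lemma~\ref{lm:sum:lb} gives $\sum_{j=1}^n|p_j(x)|^2\ge n/C''$, hence $n^2\sum_{j=1}^n|p_j(x)|^2\ge n^3/C''$, and the claim follows with $C:=M^2C''$. For the second inequality, summing $\sup_{w\in B(x,1/n)}|p_j''(w)|\le Mn^2$ squared over $j$ yields $\sum_{j=1}^n\sup_{w\in B(x,1/n)}|p_j''(w)|^2\le M^2n^5$, while $n^4\sum_{j=1}^n|p_j(x)|^2\ge n^5/C''$, so the claim follows with the same constant $C$.

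The only mild point to be careful about — hardly an obstacle — is the bookkeeping of radii: the estimates \eqref{eq:bound:pj:1dev}--\eqref{eq:bound:pj:2dev} are phrased for $z$ in an $O(1/n)$-neighborhood of a point of $I$, whereas here $x$ is itself an $O(1/n)$ perturbation of $I$ and we additionally need uniform control on $B(x,1/n)$; absorbing all of this into a single neighborhood of $I$ merely requires enlarging the radius constant in those estimates, after which all constants depend only on $I$, as the section permits.
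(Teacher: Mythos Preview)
Your proposal is correct and follows essentially the same argument as the paper: bound each $|p_j'|^2$ by $O(n^2)$ and each $\sup|p_j''|^2$ by $O(n^4)$ via \eqref{eq:bound:pj:1dev}--\eqref{eq:bound:pj:2dev}, sum to get $O(n^3)$ and $O(n^5)$, and then use the lower bound $\sum_j|p_j(x)|^2\gg n$ to absorb these into the right-hand sides. Your invocation of Lemma~\ref{lm:sum:lb} rather than \eqref{cond:kernel:2} is in fact slightly cleaner, since the former is stated on the full neighborhood $I+B(0,c/n)$ that the lemma requires.
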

\begin{proof}Let $c_1 = 1$. From \eqref{eq:bound:pj:1dev} and \eqref{cond:kernel:2}, we get
\begin{equation} 
\sum_{j=1}^{n} |p_j'(x)|^{2}\ \  \ll \ \  n^{3} \ \ \ll \ \ n ^{2}\sum_{j=1}^{n} |p_j(x)|^{2}.\nonumber 
\end{equation}
Similarly, by \eqref{eq:bound:pj:2dev} and \eqref{cond:kernel:2}, we get
\begin{equation} 
\sum_{j=1}^{n} \sup_{z\in B_{\C}(x, 1/n)}|p_j''(z)|^{2}\ \ \ll \ \ n^{5}\ \ \ll \ \ n ^{4}\sum_{j=1}^{n} |p_j(x)|^{2}.\nonumber
\end{equation}
These estimates complete the proof of Lemma \eqref{cond-repulsion}.
\end{proof}

 \subsection{Anti-concentration}
In the next lemma, we show that the minimum of $F_n$ in a local ball is bounded away from 0 with high probability. This is the key lemma of our proof. 
\begin{lemma} [Anti-concentration]\label{cond-smallball}  For any positive constants $A, c$ and $c_1$,  the following holds: For every $z\in I+B(0, c/n)$, with probability at least $1 - O(n^{-A})$, there exists $z'\in B (z, c/n)$ for which $|F_n(z')|\ge \exp(-n^{c_1})$.
\end{lemma}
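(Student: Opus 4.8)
The plan is to bound the complementary event
$$\mathcal B:=\{\,|F_n(w)|\le \exp(-n^{c_1})\ \text{for all}\ w\in B(z,c/n)\,\},$$
and in fact to show $\P(\mathcal B)=O(n^{-A})$ for \emph{every} $A>0$, which certainly suffices. Since $\exp(-n^{c_1})$ is decreasing in $c_1$, the event $\mathcal B$ only shrinks as $c_1$ grows, so I may assume $0<c_1\le 1$. I will write $\epsilon_0:=\exp(-n^{c_1})$ and fix $K:=\lceil n^{c_1}/(20\log n)\rceil$, so that $K\le n$ for large $n$ while $K\to\infty$. The three ingredients are: (i) on $\mathcal B$ the first $K$ Taylor coefficients of $F_n$ at $z$ are super-polynomially small; (ii) after conditioning on the high coefficients, orthonormality turns this into the statement that $(\xi_0,\dots,\xi_K)$ lies in a super-polynomially small ball around a fixed point; (iii) a uniform single-coordinate anti-concentration bound — which will use the bounded $(2+\ep)$-th moment, not merely unit variance — then shows such an event has probability at most $(1-\eta_0)^{K+1}=O(n^{-A})$.

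For (i), I would note that on $\mathcal B$ we have $|F_n|\le\epsilon_0$ on the circle $|w-z|=c/n$, so Cauchy's integral formula gives $|F_n^{(j)}(z)|\le j!\,\epsilon_0\,(n/c)^j$ for all $j$. For $0\le j\le K$ the factor $j!(n/c)^j\le (Kn/c)^K=e^{O(K\log n)}$ is, by the choice of $K$, smaller than $\epsilon_0^{-1}=e^{n^{c_1}}$ — say at most $e^{n^{c_1}/4}$ — so $|F_n^{(j)}(z)|\le e^{-n^{c_1}/2}=:\delta_1$ for all $0\le j\le K$. (Alternatively, restricting to a real sub-interval of $B(z,c/n)$, one passes from sup-norm smallness to small derivatives by the Remez inequality applied to the degree-$\le K$ piece below, at the same cost $e^{O(K\log n)}$.) For (ii), let $P_K:=\sum_{i=0}^K\xi_ip_i$ and condition on $\xi_{K+1},\dots,\xi_n$. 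Since $P_K^{(j)}(z)=F_n^{(j)}(z)-\sum_{i>K}\xi_ip_i^{(j)}(z)$, on $\mathcal B$ the numbers $P_K^{(j)}(z)$, $0\le j\le K$, lie within $\delta_1$ of the fixed numbers $\gamma_j:=-\sum_{i>K}\xi_ip_i^{(j)}(z)$ determined by the conditioning; let $R$ be the unique polynomial of degree $\le K$ with $R^{(j)}(z)=\gamma_j$ for $0\le j\le K$. Then $Q:=P_K-R$ has degree $\le K$ and $|Q^{(j)}(z)|\le\delta_1$ for all $j\le K$, so its Taylor expansion about $z$ yields $\|Q\|_{L^\infty(\K)}\ll\delta_1$ (the convex hull of $\K$ is bounded, and $z$ is bounded because $z\in I+B(0,c/n)$ with $I$ compact), hence $\|Q\|_{L^2(\mu)}\ll\delta_1$. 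Writing $R=\sum_{i=0}^Kr_ip_i$, orthonormality of $(p_i)$ gives $\sum_{i=0}^K|\xi_i-r_i|^2=\|Q\|_{L^2(\mu)}^2\ll\delta_1^2$, so on $\mathcal B$ one has $|\xi_i-r_i|\le\delta_2$ for every $i\le K$, where $\delta_2\ll\delta_1$ and the $r_i$ are fixed by the conditioning.

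For (iii), I would use the elementary fact that for any real $\xi$ with $\E\xi=0$, $\E\xi^2=1$ and $\E|\xi|^{2+\ep}\le\tau$, there is $\eta_0=\eta_0(\tau,\ep)\in(0,1)$ with $\sup_t\P(\xi\in[t,t+\tfrac14])\le1-\eta_0$: setting $\eta:=\P(\xi\notin[t,t+\tfrac14])$ and assuming $\eta<\tfrac12$, either the interval $[t,t+\tfrac14]$ lies inside $[-\tfrac58,\tfrac58]$, forcing $\E[\xi^2\mathbf 1_{\xi\notin[t,t+1/4]}]>\tfrac12$, or it is bounded away from $0$, in which case $\E\xi=0$ together with Cauchy--Schwarz forces $\E[\xi^2\mathbf 1_{\xi\notin[t,t+1/4]}]\ge c'/\eta$ for an absolute $c'>0$; in either case Hölder's inequality $\E[\xi^2\mathbf 1_{\xi\notin[t,t+1/4]}]\le\tau^{2/(2+\ep)}\eta^{\ep/(2+\ep)}$ forces $\eta\ge\eta_0(\tau,\ep)>0$. (For complex coefficients one centers and passes to a real part.) Applying this with radius $\delta_2<\tfrac18$ and using independence of the $\xi_i$,
$$\P(\mathcal B\mid\xi_{K+1},\dots,\xi_n)\le\prod_{i=0}^K\P(|\xi_i-r_i|\le\delta_2)\le(1-\eta_0)^{K+1},$$
so $\P(\mathcal B)\le(1-\eta_0)^{K+1}\le e^{-\eta_0K}$, which is $\le n^{-A}$ once $n$ is large because $K\ge n^{c_1}/(20\log n)\to\infty$; the finitely many small $n$ are absorbed into the implied constant.

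The hard part is the balancing act in (i)--(ii): transferring smallness from the $1/n$-scale ball to the global scale costs a factor $e^{O(K\log n)}$ (whether via Cauchy's estimates or via Remez), so the budget $\epsilon_0^{-1}=e^{n^{c_1}}$ only permits $K$ of order $n^{c_1}/\log n$; the argument survives solely because this still tends to infinity, which is exactly what makes $(1-\eta_0)^{K+1}$ beat every $n^{-A}$. The other point to get right is the uniform constant $\eta_0$ in (iii): it genuinely requires the uniformly bounded $(2+\ep)$-th moment — without it a unit-variance variable can place almost all of its mass in an arbitrarily short interval, and no anti-concentration of the required uniform strength would be available.
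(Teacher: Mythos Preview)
Your proof is correct and takes a genuinely different route from the paper's. The paper proves the stronger Lemma~3.10: it first treats Rademacher coefficients, where it needs a Hal\'asz-type small-ball inequality whose hypothesis requires that, at some point $x'$ in the interval, every short signed combination $\sum_s \pm b_{i_s}p_{i_s}(x')$ stays away from $0$. To find such an $x'$ it bounds the measure of the bad set for each combination via the Remez inequality, feeding in a global sup-norm lower bound obtained from the $L^2(\mu)$-orthonormality of the $p_i$; a union bound over the (polynomially many) combinations then produces the required $x'$. Finally it passes from Rademacher to symmetric to general $\xi_i$ by a standard symmetrization argument.

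Your argument bypasses Hal\'asz, Remez, and the Rademacher detour altogether: from $\sup_{B(z,c/n)}|F_n|\le e^{-n^{c_1}}$ you use Cauchy's estimates to force the first $K\asymp n^{c_1}/\log n$ derivatives of $F_n$ at $z$ to be tiny; conditioning on $\xi_{K+1},\dots,\xi_n$ turns this into $\|P_K-R\|_{L^\infty(\K)}\ll e^{-n^{c_1}/2}$, and orthonormality of $(p_i)$ in $L^2(\mu)$ then pins each $\xi_i$ ($i\le K$) inside a microscopic interval around a fixed $r_i$; the elementary single-coordinate bound $\sup_t\P(\xi_i\in[t,t+\tfrac14])\le 1-\eta_0$ (which, as you note, needs the uniform $(2+\ep)$-moment bound) finishes via independence. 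This is more direct and conceptually cleaner. What the paper's route buys is a \emph{deterministic} real point $x'$ at which the small-ball bound holds, and a formulation (Lemma~3.10) already packaged for weighted sums $\sum_j b_j\xi_jp_j$; your method would extend to that weighted setting as well, since orthonormality yields $|b_i\xi_i-r_i|\ll e^{-n^{c_1}/2}$ and the assumed lower bound $|b_i|\gg e^{-i^{c_1/2}}$ for $i\le K$ still leaves $|\xi_i-r_i/b_i|$ tiny.
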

 In fact, we shall prove the following more general result, which applies to a more general type of random orthonormal polynomials
$$F_{(b_{j, n}), n}(x)=\sum_{j=1}^n b_{j, n} \xi_j p_j(x)$$
where $b_{j, n}$ are deterministic numbers that may depend on $n$.

\begin{lemma} \label{lm:anti:general}  Let $A, c, c_1,  \ep$ be positive constants. Assume that the random variables $\xi_{i}$ independent with uniformly bounded $(2+\ep)$-central moments and 
$$e^{- i^{c_1/2}} \ll |b_{i, n}| \ll e^{  i^{c_1/4}}$$  
uniformly  for all $i, n\geq 0$.  

Then there exists a constant $C>0$  such that the following holds. For every $x\in I$,  there exists $x'\in[x-c/n, x+c/n]$  such that 
$$\sup_{z\in \R}\P(|F_{(b_{j, n}), n}(x')-Z|\leq e^{-n^{c_1}})\leq Cn^{-A}.$$
\end{lemma}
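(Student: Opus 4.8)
The plan is to reduce the anti-concentration statement to a statement about a short linear combination $\sum_{j\in S} \pm b_j p_j$ that does not vanish identically on any small interval, then combine this with the Remez inequality to locate a point $x'$ where a \emph{fixed} number of terms of $F_{(b_j),n}$ is bounded away from $0$, and finally run a standard conditioning/small-ball argument on the remaining random coefficients. First I would isolate a block of indices, say $S = \{m+1,\dots,m+K\}$ for a suitable constant $K = K(I)$ and a starting index $m$ that we are free to choose; the crucial claim is that for \emph{every} choice of signs $\epsilon_j \in \{\pm 1\}$, the polynomial $q(x) = \sum_{j\in S} \epsilon_j b_j p_j(x)$ cannot be uniformly small on all of $I$. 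To see this, note that $q$ has degree at most $m+K$, so if $\|q\|_{L^\infty(I)}$ were $\le \eta$ then by the Remez inequality $\|q\|_{L^\infty(\K)} \le \eta \cdot T_{m+K}(1 + \text{const})$ grows at most like $e^{O(m+K)\log(\cdot)}$; on the other hand orthonormality forces $\int q^2 \, d\mu = \sum_{j\in S} b_j^2 \ge K e^{-2(m+K)^{c_1/2}}$, and using the upper bound $w \le C$ near $I$ together with the lower bound $w \ge C^{-1}$ away from the bad set (more precisely, the structure of $\mu$ from the hypothesis of Theorem~\ref{t.main1}) this $L^2$ mass cannot all concentrate where $q$ is allowed to be large. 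Balancing these forces a lower bound $\|q\|_{L^\infty(I)} \ge e^{-n^{c_1}/2}$ (with room to spare) provided $m+K = O(n)$ and $m$ is, say, of order $n/2$; here the tension between the exponential Remez factor and the exponentially small possible size of $b_j$ is exactly why the hypothesis $|b_j| \gg e^{-i^{c_1/2}}$ is stated with the exponent $c_1/2$.

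Next I would upgrade this ``$L^\infty$ on $I$'' bound to a bound at a \emph{specific} nearby point. Since $q$ has degree $O(n)$ and $\|q\|_\infty$ on a neighbourhood of $I$ is at most $e^{O(n^{c_1/4})} \cdot \|q\|_{L^\infty(I)}$ (again by Bernstein/Remez-type growth, using $|b_j| \ll e^{i^{c_1/4}}$ and \eqref{eq:bound:pj}), Markov's inequality for polynomials bounds $|q'|$ on $I$ by $n^{O(1)}$ times $\|q\|_{L^\infty(I)}$; hence on an interval of length $c/n$ around the point where $|q|$ attains its max on $I$, $q$ cannot drop below $\tfrac12 \|q\|_{L^\infty(I)} \ge e^{-n^{c_1}}$. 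This gives, for each sign pattern, a point $x'_\epsilon \in [x-c/n, x+c/n]$; but we actually want one point that works regardless of the (random) signs, so instead I would argue at the level of the random variables directly: condition on $(\xi_j)_{j\notin S}$, write $F_{(b_j),n}(x') = Y(x') + \sum_{j\in S} b_j \xi_j p_j(x')$ where $Y$ is measurable with respect to the conditioning, and observe that the conditional law of $\sum_{j\in S} b_j \xi_j p_j(x')$ — a sum of $K$ independent non-degenerate pieces — has bounded density-type small-ball behaviour at scale comparable to $\|(b_j p_j(x'))_{j\in S}\|$, which by the previous paragraph is $\gg e^{-n^{c_1}}$ at a suitable $x'$. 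Because $K$ is a fixed constant, the standard Erd\H{o}s--Littlewood--Offord / Esseen-type bound gives $\P(|F_{(b_j),n}(x')| \le e^{-n^{c_1}} \mid (\xi_j)_{j\notin S}) \le C e^{-n^{c_1}} / (\text{scale}) \le C n^{-A}$ once we also truncate $|\xi_j| \le e^{n^{c_1/2}}$ for $j \in S$ on an event of probability $1 - O(n^{-A})$ (via Markov and the $(2+\ep)$ moment bound, exactly as in Lemma~\ref{cond-bddn}); taking expectation over the conditioning finishes it. One technical point: the small-ball bound for $\sum_{j\in S} b_j \xi_j p_j(x')$ requires that \emph{not all} of the $K$ vectors $b_j p_j(x')$ are tiny simultaneously, which is precisely what the ``every sign pattern'' version of the non-vanishing claim guarantees — even if individual $p_j(x')$ vanish, the $L^2$-norm of the block is large, so the max over $j\in S$ of $|b_j p_j(x')|$ is $\gg e^{-n^{c_1}}/\sqrt K$, and a single non-degenerate summand already suffices for the Esseen bound.

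The main obstacle I anticipate is the first step: proving that a block of $K$ consecutive orthonormal polynomials (weighted by the $b_j$) cannot be uniformly exponentially small on $I$, with the exponential rate $e^{-n^{c_1}}$ matching the claimed bound. The Remez inequality controls $\|q\|_{L^\infty(\K)}$ in terms of $\|q\|_{L^\infty(I)}$ but with a multiplicative factor that is exponential in $\deg q = O(n)$ — this is the reason the lemma is stated with $c_1$ free and the $b_j$ allowed to be only stretched-exponentially small rather than, say, polynomially small — so the calculation balancing (i) the Remez blow-up factor, (ii) the lower bound $\sum_{j\in S} b_j^2 \ge K e^{-2(m+K)^{c_1/2}}$ from orthonormality, and (iii) the pointwise bound $|p_j| = O(1)$ on $I$ needs to be done carefully to extract a clean $e^{-n^{c_1}}$. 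I would also need to handle the edge-of-support set $S_{\ep}$ where $w$ may degenerate, but since $I$ is a \emph{compact} subinterval of the interior of $\K$ this only affects the $L^2$ bookkeeping mildly and can be absorbed by choosing the neighbourhood $J \supset I$ as in the proof of Lemma~\ref{lm:kernel}; the choice of the starting index $m \asymp n/2$ (rather than $m$ near $0$ or near $n$) is what keeps all the exponents of the form $n^{c_1}$ rather than, say, $n^{c_1}\log n$, and is worth stating explicitly.
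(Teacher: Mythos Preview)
Your proposal has a genuine gap at the small-ball step. You take a block $S$ of \emph{constant} size $K=K(I)$, condition on $(\xi_j)_{j\notin S}$, and claim an Esseen-type bound of the form
\[
\P\Big(\Big|\sum_{j\in S} b_j\xi_j p_j(x')-Z\Big|\le e^{-n^{c_1}}\Big)\ \le\ C\,\frac{e^{-n^{c_1}}}{\text{scale}}\ \le\ Cn^{-A}.
\]
This fails for two reasons. First, even granting the bound, your ``scale'' is only shown to be $\gg e^{-n^{c_1}}$, so the ratio is $O(1)$, not $O(n^{-A})$. Second, and more fundamentally, an Esseen/density bound requires at least one summand with bounded density; when the $\xi_j$ are Rademacher (which the lemma must cover) the block sum $\sum_{j\in S} b_j\xi_j p_j(x')$ is supported on at most $2^K$ points, each carrying probability $\ge 2^{-K}$. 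Hence the L\'evy concentration function is bounded below by $2^{-K}$, a constant independent of $n$, and no choice of $x'$ can push the conditional small-ball probability below this. A constant-size block simply cannot produce polynomial decay in $n$.

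The paper's proof resolves exactly this issue by working with $N=n^{\alpha}$ indices (with $\alpha=c_1/4$) rather than $O(1)$, and replacing the Esseen bound by the Hal\'asz-type inequality: if all signed combinations $\sum_{s\le l'} a_{i_s}-\sum_{s\le l''} a_{j_s}$ with $l'+l''\le 2l$ are separated by at least $a$, then $\sup_Z\P(|\sum \epsilon_j a_j - Z|\le aN^{-l})=O_l(N^{-l})$. The Remez/orthogonality idea you sketched is then used not once but for each of the $O(n^{2\alpha l})$ index-pairs, to show that the set of $x'\in[x-c/n,x+c/n]$ where the separation fails has measure $O(e^{-n^{c_1/8}})$; a union bound leaves a good $x'$ in the prescribed interval, and choosing $l=A/(2\alpha)$ gives the $n^{-A}$ rate. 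The passage from Rademacher to general $\xi_i$ is then by symmetrization. Note also a secondary localization issue in your sketch: the point where $|q|$ attains its maximum on $I$ need not lie in $[x-c/n,x+c/n]$; the paper avoids this by applying Remez with $E\subset[x-c/n,x+c/n]$ directly against the full support interval $J$, which keeps everything inside the required window.
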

Note that we do not need to assume $\E \xi_i$ to be uniformly bounded for this lemma. 
\begin{proof} [Proof of Lemma \ref{lm:anti:general}]  For simplicity of notation, we write $F_n$ instead of $F_{(b_{j, n}), n}$, and $b_j$ instead of $b_{j, n}$ in the proof. By replacing $c_1$ by a slightly smaller constant and $A$ by a larger constant, it suffices to show that there exists some $x'\in [x-c/n, x+c/n]$ such that
\begin{equation}\label{eq:smallball:F}
 \sup_{Z\in \R} \P(|F_{n}(x')-Z|\leq e^{-n^{c_1}}n^{-A/2})\leq Cn^{-A/2}.
\end{equation}
We first prove this inequality for the case when $\xi_{i}$ are iid Rademacher by using the following Halasz type inequality.

\begin{lemma} (\cite{halasz1977} or \cite[Lemma 9.3]{nguyenvurandomfunction17}) \label{lm:halasz} Let $\ep_{1}$, \dots , $\ep_{N}$  be independent Rademacher random variables. Let $a_{1}$, \dots, $a_{N}$  be real numbers and $l$  be a fixed integer. Assume that for any distinct sets $\{i_{1}, \dots, i_{l'}\}$  and $\{j_{1}, \dots, j_{l''}\}$  where $l'+l''\leq 2l$,  one has $|a_{i_{1}}+\cdots+a_{i_{l'}} -a_{j_{1}}-\cdots-a_{j_{l''}}|\geq a$  then for any number $Z,$
$$
\P\left (\left |\sum_{j=1}^{N}a_{j}\ep_{j}-Z\right |\leq aN^{-l}\right )=O_{l}(N^{-l}).
$$
\end{lemma}
Let $1>\alpha>0$ and $l\in \mathbb{N}$ be constants to be chosen. Conditioning on all the $\xi_{i}$ with $i\geq n^{\alpha}$, it suffices to show the following version of \eqref{eq:smallball:F}:
\begin{equation}\label{eq:smallball:F:1}
\sup_{Z\in \R}\P\left (\left |\sum_{j=0}^{n^{\alpha}} b_j \xi_j p_j(x') - Z\right |\leq e^{-n^{c_1}}n^{-A/2}\right )\leq Cn^{-A/2}.
\end{equation} 

For every distinct subsets $\{i_{1}, \dots, i_{l'}\}$ and $\{j_{1}, \dots, j_{l''}\}$ of the set $\{0, 1 , \dots, n^{\alpha}\}$ with $ l'+l''\leq 2l$, let 
\begin{eqnarray*}
E&=&E_{i_{1},\ldots,i_{l'},j_{1},\ldots,j_{l''}}\\
&=&\left \{x'\in[x-c/n, x+c/n]: \left |  \sum_{s=1}^{l'}b_{i_{j}}p_{i_{s}}(x')-\sum_{s=1}^{l''}b_{j_{s}}p_{j_{s}}(x')\right |\leq e^{-n^{c_1}}\right \}
\end{eqnarray*}

To use Lemma \ref{lm:halasz}, we aim to find an $x'$ that lies outside of all of these sets. Our idea is to use the classical Remez inequality to control their size.

\begin{lemma}(Remez inequality).  Let $P$  be a polynomial of degree $d$  in $\mathbb{R}$.  Then for any interval $J\subset \mathbb{R}$ and every measurable subset $E\subset J$,  we have
$$
\max_{J}|P|\leq \left (\frac{4|J|}{|E|}\right )^{d}\sup_{E}|P|.
$$
\end{lemma} 
Let $J$ be the smallest interval that contains $\K+[-1/n, 1/n]$. Applying Remez inequality to $P=  \sum_{s=1}^{l'}b_{i_{s}}p_{i_{s}}(x')-   \sum_{s=1}^{l''}b_{j_{s}}p_{j_{s}}(x')$ , one gets
$$
M :=\max_{J}|P|\leq \left (\frac{4|J|}{|E|}\right )^{n^{\alpha}}e^{-n^{c_1}}.
$$

Using orthogonality of $p_{i},$
\begin{equation}\label{eq:ortho:M2}
  M^{2}  \ \ \geq \ \   \frac{1}{\mu(J)}\int P^{2}d\mu \ \ =\frac{1}{\mu(J)} \left (\ \  \sum_{s=1}^{l'}|b_{i_{s}}|^{2}+\sum_{s=1}^{l''}|b_{j_{s}}|^{2}\right )  \ \ \gg \ \ e^{-n^{c_1/2}}.
\end{equation}
Thus,
$$
e^{n^{c_1/2-\alpha}}\leq\frac{C}{|E|}.
$$
Choosing $\alpha=c_1/4$, we have $|E_{i_{1},\ldots,i_{l'},j_{1},\ldots,j_{l''}}|=O(e^{-n^{c_1/4}})$. By the union bound, the Lebesgue measure of the union of all such sets $E_{i_{1},\ldots,i_{l'},j_{1},\ldots,j_{l''}}$ is at most $n^{2\alpha l}O(e^{-n^{c_1/4}})=O(e^{-n^{c_1/8}})$. Thus, there exists an $x'\in[x-c/n, x+c/n]$ that lies outside of all $E_{i_{1},\ldots,i_{l'},j_{1},\ldots,j_{l''}}$. Applying Lemma \ref{lm:halasz} for these $x'$ with $a = e^{-n^{c_1}}$ and $N = n^{\alpha}$ yields
\begin{equation} 
\sup_{Z\in \R}\P\left (\left |\sum_{j=0}^{n^{\alpha}} b_j \xi_j p_j(x') - Z\right |\leq e^{-n^{c_1}}n^{-\alpha l}\right )\leq Cn^{-\alpha l}.
\end{equation} 
Setting $ l=A/2\alpha$, we obtain \eqref{eq:smallball:F:1} and thus \eqref{eq:smallball:F} for the Rademacher case.

Next, assume that $\xi_{i}$ are symmetric for all $i$, i.e., $\xi_{i}$ and $-\xi_{i}$ have the same distribution. Since $(\xi_{i})_{i}$ and $(\xi_{i}\ep_{i})_{i}$ have the same distribution where $\ep_{1}$, \dots, $\ep_{n}$ are independent Rademacher random variables, \eqref{eq:smallball:F} is equivalent to proving that for all $Z$,
$$
\P\left (\left |\sum_{j=0}^{n}b_{j}\xi_{j}\ep_{j}p_{j}(x')-Z\right |\leq e^{-n^{c_1}}n^{-A/2}\right )\leq Cn^{-A/2}
$$
for some $x'\in[x-c/n, x+c/n]$. The latter can be deduced from
$$
\int_{x-c/n}^{x+c/n}\P\left (\left |\sum_{j=0}^{n}b_{j}\xi_{j}\ep_{j}p_{j}(x')-Z\right |\leq e^{-n^{c_1}}n^{-A/2}\right )dx'\leq Cn^{-A/2-1}
$$
which, by Fubini's theorem, is the same as
\begin{equation}\label{eq:e:smallball}
  \E _{\xi_{1},\ldots,\xi_{n}}\int_{x-c/n}^{x+c/n}\P_{\ep_1, \dots, \ep_{n}} \left (\left |\sum_{j=0}^{n}b_{j}\xi_{j}\ep_{j}p_{j}(x')-Z\right |\leq e^{-n^{c_1}}n^{-A/2}\right )dx'\leq Cn^{-A/2-1}
\end{equation}
where the expectation is with respect to the randomness of the random variables $\xi_1, \dots, \xi_n$ and the probability is with respect to the randomness of $\ep_1, \dots, \ep_n$.

To prove the last inequality, notice that from the boundedness of the $ (2+\ep)$-moments of $\xi_{i}$ and symmetry of $\xi_{i}$, there exist positive constants $d$ and $q$ such that $\P(|\xi_{i}|<d)\leq q<1$. Indeed, if for some $d>0, \P(|\xi_{i}|<d)>1-d$, then by H\H{o}lder's inequality
\begin{eqnarray*}
1 &\leq& \E |\xi_{i}|^{2}=\E |\xi_{i}|^{2}1_{|\xi_{i}|<d}+\E |\xi_{i}|^{2}1_{|\xi_{i}|\geq d}\\
&\leq& d^{2}+d^{\ep/(2+\ep)}(\E |\xi_{i}|^{2+\ep})^{2/(2+\ep)}.
\end{eqnarray*}

Thus, one  chooses $d$ small enough and $q=1-d$ to have $\P(|\xi_{i}|<d)\leq q<1.$

Now, by Chernoff's inequality, with probability at least $1-e^{-\Theta(n^{\alpha})}$, there are at least $(1-q)n^{\alpha}/2$ indices $j\in\{0, \dots, n^{\alpha}\}$ for which $|\xi_{j}|\geq d$. On the event that this happens, we show that
\begin{equation}\label{eq:e:smallball:1}
\int_{x-c/n}^{x+c/n}\P_{\ep_1, \dots, \ep_{n}} \left (\left |\sum_{j=0}^{n}b_{j}\xi_{j}\ep_{j}p_{j}(x')-Z\right |\leq e^{-n^{c_1}}n^{-A/2}\right )dx'\leq Cn^{-A/2-1}.
\end{equation}
Assuming this, the left-hand side of \eqref{eq:e:smallball} is at most $Cn^{-A/2-1}+ e^{-\Theta(n^{\alpha})}\ll n^{-A/2-1}$, proving \eqref{eq:e:smallball}.

To prove \eqref{eq:e:smallball:1}, we condition on all $\ep_{j}$ with $j>n^{\alpha}$ and all $\ep_j$ with $|\xi_{j}|<d$. So, the remaining randomness comes from at least $(1-q)n^{\alpha}/2$ random variables $\ep_j$. We obtain from the above argument for the Rademacher case that for all $x'$ outside a subset of $[x-c/n, x+c/n]$ of measure at most $Ce^{-n^{c_1/8}}$, we have
\begin{eqnarray*}
\P_{\ep_1, \dots, \ep_n} \left (\left |\sum_{j=0}^{n}b_{j}\xi_{j}\ep_{j}p_{j}(x')-Z\right |\leq e^{-n^{c_1}}n^{-A/2}\right )  &\leq& Cn^{-A/2}.
\end{eqnarray*}
Integrating with respect to $x'$ gives \eqref{eq:e:smallball:1} and therefore \eqref{eq:smallball:F} for the symmetric case.

For the general case, assume that \eqref{eq:smallball:F} fails. That means there exists $Z\in \R$ such that
$$\P(|F_{n}(x')-Z|\leq e^{-n^{c_1}}n^{-A/2})> Cn^{-A/2}.$$
Let $\xi_{1}'$, \dots, $\xi_{n}'$ be independent copies of $\xi_{1}$, \dots, $\xi_{n},$ respectively. Let $\xi_{i}''=\xi_{i}-\xi_{i}'$ then $\xi_{i}'$ is symmetric and has uniformly bounded $( 2+\ep)$ moment. Then
\begin{eqnarray*}
&& \P\left (\left |\sum_{j=0}^{n}b_{j}\xi_{j}''p_{j}(x')\right |\leq 2e^{-n^{c_1}}n^{-A/2}\right ) \\
&\geq& \P\left (\left |\sum_{j=0}^{n}b_{j}\xi_{j}p_{j}(x')-Z|\leq e^{-n^{c_1}}n^{-A/2}, |\sum_{j=0}^{n}b_{j}\xi_{j}p_{j}(x')-Z\right |\leq e^{-n^{c_1}}n^{-A/2}\right )\nonumber\\
&\geq& Cn^{-A}.\nonumber
\end{eqnarray*}
 
This contradicts the symmetric case where $A$ is replaced by $2A$, and hence completes the proof. 
\end{proof}

%
%
%
%
%

\section{Proof of local universality for complex roots, Theorem \ref{thm:general:complex} }\label{sec:proof:complex}
In this section and the next, we prove the universality Theorems \ref{thm:general:complex} and \ref{thm:general:real}. They are similar in ideas to \cite{nguyenvurandomfunction17} by the last two authors. We nevertheless present them with two goals. Firstly, to make the paper self-contained and allow the reader not familiar with \cite{nguyenvurandomfunction17} to get a full picture of the proof. Secondly, we present them in a slightly different way so those who have read \cite{nguyenvurandomfunction17} can have a refreshed view.

Before going to the proof, let us outline a sketch.
\begin{itemize}
	\item Step 1: First we reduce to the basic case that $k=1$ and $G$ is a function of one complex variable. This case already contains all of the key ideas.
	\item Step 2: Apply Green's formula to reduce the local statistics to an explicit expression for $F_n$
	$$\E \sum_{j} G_1(\zeta_j) =\E \int_{\mathbb C}\log |F_n(z)| \times \text{(a deterministic function) } dz.$$
	We then replace the $\log |F_n(z)|$ (which blows up when $F_n(z)$ is too big or too small) by a smooth function $K_n(\log |F_n(z)|)$ where $K_n$ is a nice, bounded function 
	\begin{eqnarray}
	\E \int_{\mathbb C}\log |F_n(z)| \times \text{(a deterministic function) } dz &\approx& \E \int_{\mathbb C} K_n(\log |F_n(z)|)dz \notag\\
	&=& \int_{\mathbb C} \E K_n(\log |F_n(z)|) dz.\notag
	\end{eqnarray}
	\item Step 3: Use Lindeberg swapping technique to perform the comparison with the Gaussian counterpart
	$$ \E K_n(\log |F_n(z)|)\approx \E K_n(\log |\tilde F_n(z)|).$$ 
	
\end{itemize}
We now go into the details. 
\subsection{Step 1: Reduction}
We first describe some mild simplifications in the proof presented in this section. First, without loss of generality we may assume that $\widetilde \xi_j$ are all Gaussian. For the sake of simplificity and  brevity, we will assume that for any $j=0,...,n$  the coefficients $\xi_j$ and $\widetilde \xi_j$   have matching moments up to order $2$.  We will also assume that the test functions $G$ have the tensor form $G(\eta_1,\dots, \eta_k)=G_1(\eta_1)\dots G_1(\eta_k)$ where, for each $j$, $G_j$ is supported in $B(z_j,\frac c {n})$ with sufficient smoothness and so that $\|G_j^{(a)}\|_{\infty} = O(n^a)$ for $0\le a \le 2$. Here $c>0$ is a sufficiently small constant that does not depend on $n$. By a standard argument involving multiple Fourier series expansions,  the general test functions specified in Theorem~\ref{thm:general:complex} could be well approximated  by weighted average of these simpler functions. The amount of regularity required for  $G$ in the proof could be easily tracked, and we deemphasize this aspect of the proof  to improve the clarity and the brevity of the argument. Furthermore, we will only show the details for the case $k=1$, extension to the general case  is fairly straightforward, see e.g. \cite{DOV,do2019, nguyenvurandomfunction17,TVpoly}.

\subsection{Step 2: reduce to a nice integral of $F_n$}
We now transform the local statistics about the roots of $F_n$ to a more convenient integral form. Let $(\zeta_j)$ be the complex zeros of $F_n$ (multiplicity allowed). 
Using Green's formula, we write
$$\E \sum_{j} G_1(\zeta_j) =\E \int_{\mathbb C}\log |F_n(z)| \left (-\frac 1{2\pi}\Delta G_1(z)\right )dz, $$
where $dz$ is the two dimensional Lebesgue measure.
Let $H(z)=-\frac 1 {2\pi} \Delta G_1(z)$, which is bounded above by $O(n^2)$ and supported inside $B(z_1, \frac c{n})$.  
\begin{remark}\label{r.upperN}
Since $\|G\|_\infty =O(1)$, the above display can always be bounded above, up to some constant, by $N(B(z_1,\frac c {n}))$, the number of (complex) zeros of $F_n$ inside $B(z_1,\frac c{n})$.
\end{remark}

We now aim to approximate $\log |F_n(z)|$ by $K_n(\log|F_n(z)|)$ where $K_n$ is a nice function.
Let $\varphi:\mathbb R\to\mathbb R$ be a smooth approximation of the  function $f(x)=x$ near $0$ such that $|\varphi(x)|\le |x|$ everywhere, and furthermore $\varphi$ is supported on $[-2,2]$ and equals $x$ in $[-1,1]$. Let $M_n$ be a large deterministic number that may depend on $n$ and let $K_n(x)=M_n \varphi(x/M_n )$. Note that $|K_n(x)|\le min(|x|,2M_n)$ by construction, and for any $0\le \alpha\le 3$, it holds that 
$$\|K_n^{(\alpha)}\|_{\infty} \le M_n^{-2}.$$

For convenience of notation, let 
$$D_n=I+B_{\mathbb C}(0, c/n)$$
the Minkowski sum of these two sets.  We shall always assume that $z_1\in D_n$ and $c>0$ is a sufficiently small constant in the proof. The next lemma controls the second moment of the $\log|F_n(z)|$ which we shall use immediately after to control the error term of the approximation.
\begin{lemma}\label{l.badevent} For any $A,c>0$ we may find an event $A_0$ such that
$$\P(A_0) = 1- O_A(n^{-A})$$
and
$$\E \Big[1_{A_0} \int_{B(z_1,\frac 1{10n})} |\log |F_n(z)||^2 dz\Big] = O(n^{c}).$$
\end{lemma}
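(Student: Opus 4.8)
The plan is to construct $A_0$ as the intersection of the high-probability events supplied by the boundedness lemma (Lemma~\ref{cond-bddn}) and the anti-concentration lemma (Lemma~\ref{cond-smallball}), and then to control the integral of $|\log|F_n||^2$ by splitting into the region where $|F_n|$ is bounded away from $0$ (which the anti-concentration event almost gives us, after upgrading from a single point to the whole small ball) and the region near the zeros of $F_n$, where we integrate the logarithmic singularity explicitly using the number of zeros $N(B(z_1,\tfrac{c}{n}))$. First I would fix $c_1>0$ small, apply Lemma~\ref{cond-bddn} with the ball $B(z_1,2/n)$ to get an event of probability $1-O(n^{-A})$ on which $|F_n(w)|\le \exp(n^{c_1})$ for all $w\in B(z_1,2/n)$, so that $\log|F_n(w)|\le n^{c_1}$ throughout; this handles the upper side of $|\log|F_n||$. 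For the lower side I would apply Lemma~\ref{cond-smallball} to obtain, with probability $1-O(n^{-A})$, a point $z'\in B(z_1,c/n)$ with $|F_n(z')|\ge \exp(-n^{c_1})$. Call the intersection of these two events $A_0$; then $\P(A_0)=1-O_A(n^{-A})$ as required.

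The next step is to convert the pointwise lower bound at $z'$ into integrability of $\log|F_n|$ on $B(z_1,\tfrac1{10n})$. On the event $A_0$ write $F_n(z)=c_n\prod_j (z-\zeta_j)$ for the zeros $\zeta_j$ inside, say, $B(z_1,2/n)$ times a nonvanishing factor; evaluating the Jensen/product representation at $z'$ together with the upper bound $|F_n|\le e^{n^{c_1}}$ on $B(z_1,2/n)$ forces both $N:=N(B(z_1,2/n)) = O(n^{c_1}/\log(\cdot))$—in any case $N\le n$ by Lemma~\ref{cond-poly}—and a lower bound on the nonvanishing part of the form $|c_n\prod_{\zeta_j\notin B(z_1,1/(5n))}(z-\zeta_j)|\ge \exp(-O(n^{c_1}))$ uniformly for $z\in B(z_1,\tfrac1{10n})$, using that the ratios of distances $|z-\zeta_j|/|z'-\zeta_j|$ are bounded above and below by absolute constants when $\zeta_j$ is outside the slightly larger ball. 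Hence for $z\in B(z_1,\tfrac1{10n})$,
\begin{equation*}
\big|\log|F_n(z)|\big| \le O(n^{c_1}) + \sum_{\zeta_j\in B(z_1,1/(5n))} \big|\log|z-\zeta_j|\big|.
\end{equation*}
Squaring, using $(\sum a_i)^2 \le (N+1)\sum a_i^2$ with $N\le n$, and integrating over $z\in B(z_1,\tfrac1{10n})$ (area $O(n^{-2})$), the constant part contributes $O(n^{-2}\cdot n \cdot n^{2c_1})=O(n^{2c_1-1})$, while each singular term contributes $O(n)\int_{B(z_1,1/(10n))}|\log|z-\zeta_j||^2\,dz = O(n)\cdot O(n^{-2}\log^2 n)$ since $\zeta_j$ is within $O(1/n)$ of the domain and $\int_{B(0,r)}(\log|w|)^2\,dw = O(r^2\log^2(1/r))$; summing over the at most $n$ such zeros gives $O(n^{-1}\log^2 n)$. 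Choosing $c_1$ small enough that $2c_1<c$ (relative to the target exponent $n^c$ in the statement), all terms are $O(n^{c})$, which is what we want.

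The main obstacle is the middle step: turning the single-point estimate $|F_n(z')|\ge e^{-n^{c_1}}$ from Lemma~\ref{cond-smallball} into a genuine lower bound for the non-vanishing factor of $F_n$ uniformly on all of $B(z_1,\tfrac1{10n})$. This requires carefully separating the zeros that lie in the tiny ball $B(z_1,1/(5n))$ (whose contribution is kept as the explicit logarithmic integral) from those outside it (whose product evaluated at $z$ versus at $z'$ changes only by a bounded multiplicative factor per zero, hence by $e^{O(N)} = e^{O(n^{c_1})}$ overall, again because $N=O(n^{c_1})$ by the Jensen bound), plus handling the leading coefficient $c_n$ consistently. Once this factorization bookkeeping is in place the integral estimates are routine, and one should also note (as in Remark~\ref{r.upperN}) that everything is dominated by $N(B(z_1,c/n))\le n$, so no delicate counting beyond Lemma~\ref{cond-poly} is actually needed.
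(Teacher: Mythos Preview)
Your overall strategy—define $A_0$ via Lemmas~\ref{cond-bddn} and~\ref{cond-smallball}, factor out the nearby zeros, and integrate the remaining logarithmic singularities—matches the paper's. However, the step you yourself flag as ``the main obstacle'' contains a genuine gap.

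You assert that for $z,z'\in B(z_1,\tfrac1{10n})$ the product
\[
\prod_{\zeta_j\notin B(z_1,1/(5n))}\frac{|z-\zeta_j|}{|z'-\zeta_j|}
\]
is $e^{O(N)}=e^{O(n^{c_1})}$, on the grounds that each individual ratio is bounded by an absolute constant and that $N=N(B(z_1,2/n))=O(n^{c_1})$ by Jensen. But this product runs over \emph{all} zeros of $F_n$ outside $B(z_1,\tfrac1{5n})$, and there can be up to $n$ of them, not $N$. The Jensen bound only controls zeros \emph{inside} a ball on which you have bounded $|F_n|$; it says nothing about the (possibly $\Theta(n)$) zeros sitting just outside $B(z_1,2/n)$. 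For a zero at distance $d$ the log-ratio is $O(\tfrac1{nd})$, and if order-$n$ many zeros sit at distance $\Theta(1/n)$ this sum is $\Theta(n)$, not $O(n^{c_1})$. Nothing in your definition of $A_0$ rules this out, so the claimed uniform lower bound on the ``nonvanishing part'' is unjustified.

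The paper avoids any reference to the global zero set by working purely locally. From $N(B(w_0,1/n))\le 5n^{c_1}$ one uses pigeonhole to find a \emph{zero-free annulus} of width $\Theta(n^{-(1+c_1)})$ at some radius $r\in[\tfrac1{5n},\tfrac1n]$, and factors out only the zeros inside radius $r-\tfrac14 n^{-(1+c_1)}$. The quotient $f=F_n/\prod_j(z-w_j)$ is then analytic and \emph{nonvanishing} on the slightly larger disk $B(w_0,r+\tfrac14 n^{-(1+c_1)})$, so $\log|f|$ is harmonic there. The zero-free annulus guarantees the denominator is at least $(\tfrac1{12}n^{-(1+c_1)})^{5n^{c_1}}$ on the boundary circle, which together with the maximum principle yields $\log|f|\le Cn^{2c_1}$. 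Harnack's inequality (applied to $Cn^{2c_1}-\log|f|\ge 0$, with the ratio $(R+r)/(R-r)\sim n^{c_1}$) then upgrades the single-point estimate $\log|f(w_0)|\ge -n^{c_1}$ to the uniform lower bound $\log|f|\ge -Cn^{3c_1}$ on $B(w_0,r)\supset B(z_1,\tfrac1{10n})$. This harmonic-function step is exactly what your ratio computation is missing; once you have it, the rest of your integration argument goes through essentially as you wrote it.
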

We note that a similar estimate for random algebraic polynomials was proved in \cite[Theorem 11]{do2019}.
\proof
Let $c_1=c/3$. Using Lemma~\ref{cond-bddn} and Lemma~\ref{cond-smallball}, we may let $A_0$ be the event where:\\
\begin{itemize}
\item[(i)] for every $w\in B(z_1,\frac 2 n)$, we have \ \ \ \ $\log |F_n(w)| \le n^{c_1}$.
\item[(ii)] there is $w_0\in  B(z_1, \frac 1 {10n})$ such that  \ \ \ \  $\log |F_n(w_0)|\ge -n^{c_1}$.
\end{itemize}

Now, $B(z_1,\frac 1{10n})\subset   B(w_0,\frac 1 {5n})$, thus it suffices to show that 
$$1_{A_0}\int_{B(w_0,\frac 1{5n})}  |\log |F_n(z)||^2 dz = O(n^{3c_1}).$$

Using the classical Jensen's bound for the number of roots (see for example \cite[Equation 8]{nguyenvurandomfunction17}), it follows from (i) and (ii) that
\begin{eqnarray*}
N\left (B(w_0,\frac 1{n})\right ) 
&\le& \frac{1}{\log (3/2)}\left (\sup_{|z-z_1|=\frac 3{2n}} \log |F_n(z)| - \log |F_n(w_0)|\right ) \\
&<&   5n^{c_1}.
\end{eqnarray*}
It follows that there is some $\frac 1 {5n} \le r \le \frac 1{n}$ such that $F_n$ has no zeros in the annulus $\{r- \frac 1 3 n^{-(1+c_1)}  \le |z-w_0|\le r+ \frac 1 3 n^{-(1+c_1)}\}$. Now, for any $w\in B(w_0,\frac 1 n)$ we have
$$\int_{B(w_0,1)}|\log |z-w||^2 dz \le  \int_{B(0,2)}|\log |z||^2 dz \ll 1.$$
therefore it suffices to show that
\begin{equation}\label{e.logf}
\int_{B(w_0,\frac 1{5n})}  |\log |f||^2 dz = O(n^{3c_1})
\end{equation}
where $f(z)=\frac {F_n(z)}{\prod_j (z-w_j)}$ and $w_1,\dots, w_m$ are the zeros of $F_n$  inside $B(w_0,r-\frac{1}{4}n^{-(1+c_1)})$. It is clear that $f$ is analytic on $B(w_0,r+\frac{1}{4}n^{-(1+c_1)})$, and using the maximum principle we have
$$\sup_{|z-w_0|\le r+\frac{1}4 n^{-(1+c_1)}}  \log |f(z)| \le \sup_{|z-w_0| = r+\frac{1}4 n^{-(1+c_1)}}  \log |F_n(z)|+ Cn^{c_1}\log n \le Cn^{2c_1},$$
for some absolute constant $C$. At the same time we also have 
$$\log |f(w_0)|\ge \log |F_n(w_0)|\ge -n^{-c_1}.$$
It follows from an application of Harnack's inequality that
$$\log |f(z)| \ge -Cn^{3c_1}$$
uniformly over $z\in B(w_0,r)$ (which contains $B(w_0,\frac 1 {5n})$), for some absolute constant $C$. The desired estimate \eqref{e.logf} follows immediately. This completes the proof of Lemma~\ref{l.badevent}.
\endproof

Let $A_0$ be the event given by Lemma~\ref{l.badevent}. Let $N=N(B(z_1,\frac c {n}))$ below. Using Lemma~\ref{cond-poly} and Remark~\ref{r.upperN} we have  
\begin{eqnarray*}
 && \E \Big(|\int \log |F_n(z)|  H(z) dz| 1_{A_0^c}\Big)   \quad + \quad  \E \Big( |\int K_n(\log |F_n(z)|)  H(z) dz| 1_{A_0^c} \Big) \\
 &\ll& n \P(A_0^c) + \|K_n\|_{\infty} \P (A_0^c).
\end{eqnarray*}
On the other hand, using $|K_n(x)|\le |x|$ and the fact that $K_n(x)=x$ for $|x|\le M_n$ we have
\begin{eqnarray*}
&&   \E \left (1_{A_0}\left [\int_{\mathbb C}\log |F_n(z)|  H(z)dz  -   \int _{\C} K_n(\log|F_n(z)|)  H(z)dz\right ]\right )\\
 &\ll& \E\Big(1_{A_0} \int_{B(z_1,c/n)} |\log |F_n(z)||  1_{\{z: |\log |F_n(z)||\ge M_n\}} dz\Big)\\
 &\ll& M_n^{-1} \E\Big(1_{A_0} \int_{B(z_1, c/n)} |\log |F_n(z)||^2   dz\Big)\\
 &\ll& M_n^{-1} n^{c}, \quad \text{thanks to Lemma~\ref{l.badevent}}.
\end{eqnarray*}
Collecting estimates, it follows that
\begin{eqnarray*}
&&   \E\Big[\int_{\mathbb C}\log |F_n(z)|  H(z)dz  -   \int  K_n(\log|F_n(z)|)  H(z)dz\Big] \\
&\ll& n^{1-A}  + M_n n^{-A} + M_n^{-1} n^{c}.
\end{eqnarray*}

\subsection{Step 3: Comparison with the Gaussian counterpart}
Now, we will prove the following estimate, where we recall that $\widetilde F_n$ is the Gaussian analogue of $F_n$.
\begin{lemma}\label{l.lindeberg} The following holds uniformly over  $z\in B(z_1,\frac c {n})$ and $K_n:\R\to \C$:
\begin{equation}\label{e.lindeberg}
|\E K_n(\log|F_n(z)|)-\E K_n (\log |\widetilde F_{n}(z)|)| \quad \ll \quad n^{-\alpha_1 \ep/4}\max_{0\le a\le 3} \|K_n^{(a)}\|_{\infty}.
\end{equation}
\end{lemma}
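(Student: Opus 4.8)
The plan is a Lindeberg‑type exchange argument, following the comparison method of \cite{TVpoly} as implemented for random orthogonal‑type functions in \cite{nguyenvurandomfunction17,do2019}. After reducing to Gaussian $\widetilde\xi_i$, introduce the hybrids $F^{(m)}(z)=\sum_{i<m}\widetilde\xi_i p_i(z)+\sum_{i\ge m}\xi_i p_i(z)$, so that $F^{(0)}=F_n$ and $F^{(n+1)}=\widetilde F_n$, and telescope
\[
\E K_n(\log|F_n(z)|)-\E K_n(\log|\widetilde F_n(z)|)=\sum_{m=0}^{n}\Big(\E\Phi\big(S_m+\xi_m p_m(z)\big)-\E\Phi\big(S_m+\widetilde\xi_m p_m(z)\big)\Big),
\]
where $\Phi:=K_n(\log|\cdot|)$ and $S_m:=F^{(m)}(z)-\xi_m p_m(z)$ is independent of $(\xi_m,\widetilde\xi_m)$. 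I would carry out the exchange starting from $\xi_0$, so that $S_m$ always carries a genuine Gaussian component $\sum_{i<m}\widetilde\xi_i p_i(z)$ of variance $\sum_{i<m}|p_i(z)|^2\gg m$ (Lemma~\ref{lm:kernel}); this is what will furnish floor‑free anti‑concentration for $S_m$ at the fixed point $z$. For each $m$, conditioning on $S_m$ and Taylor‑expanding $s\mapsto\Phi(S_m+s\,p_m(z))$ on $\C\cong\R^2$ to second order at $s=0$: since $\xi_m,\widetilde\xi_m$ match moments to order $2$ (Condition {\bf C1}(ii)) and are independent of $S_m$, the constant, linear and quadratic terms contribute equally, so the $m$‑th summand equals $\E R_m(\xi_m)-\E R_m(\widetilde\xi_m)$ with $R_m$ the second‑order remainder.

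The remainder is bounded using the estimates of Section~\ref{sec:tech}. From the chain rule applied to $\log|w|$ (whose $a$‑th derivatives are $O(|w|^{-a})$) one gets $|\nabla^a\Phi(w)|\ll|w|^{-a}\max_{0\le b\le a}\|K_n^{(b)}\|_\infty$ for $1\le a\le 3$, together with $|\Phi|\le\|K_n\|_\infty$; combined with the delocalization $|p_m(z)|=O(1)$, $|p_m(z)|^{2+\ep}\ll|p_m(z)|^2$ (Lemma~\ref{cond-delocal}), the derivative‑growth bounds (Lemma~\ref{cond-repulsion}) and $\E|\xi_m|^{2+\ep}\le\tau$, interpolating between the second‑ and third‑order forms of $R_m$ yields $|R_m(s)|\ll|s\,p_m(z)|^{2+\ep}\min\!\big(d_m(s)^{-(2+\ep)},\ \text{(global bound)}\big)\,\max_b\|K_n^{(b)}\|_\infty$, where $d_m(s)$ is the distance from the origin to the segment $[S_m,S_m+s\,p_m(z)]$. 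When $d_m$ is of the typical order $\sqrt n$ — which by the Gaussian component of $S_m$ happens with probability $1-O(n^{-\delta'})$ — this gives $\E|R_m|\ll|p_m(z)|^{2+\ep}n^{-(2+\ep)/2+O(\delta')}\max_b\|K_n^{(b)}\|_\infty$, and summing over $m$ with $\sum_m|p_m(z)|^{2+\ep}\ll\sum_m|p_m(z)|^2=\Theta(n)$ (the upper bound coming from \eqref{cond:pn}) produces exactly a factor $n^{-\ep/2+O(\delta')}$, which is the target order.

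The remaining — and main — difficulty is that $\Phi$ is singular at the zeros of the hybrid polynomials, so $d_m$ can be arbitrarily small; this is precisely why $\log$ is truncated to $K_n$ and why the anti‑concentration estimates of Lemmas~\ref{cond-smallball} and~\ref{lm:anti:general} and the boundedness estimate of Lemma~\ref{cond-bddn} were established. I would run the whole argument on a high‑probability event on which $|\xi_i|,|\widetilde\xi_i|\le n^{\delta}$ for all $i$ (Markov, $\delta>1/(2+\ep)$), on which every relevant hybrid polynomial and leave‑one‑out sum is $\le e^{n^{c_1}}$ throughout a $B(z,c/n)$‑neighborhood (Lemma~\ref{cond-bddn}), and on which each is $\ge e^{-n^{c_1}}$ at a nearby point (Lemma~\ref{cond-smallball}, or Lemma~\ref{lm:anti:general} applied to the mixed coefficient sequence); off this event the contribution is at most $\|K_n\|_\infty\cdot O(n^{-A})$. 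For each $m$ one then splits according to the size of $d_m$: on $\{d_m\gg\sqrt n/n^{\delta'}\}$ one uses the Taylor bound above, while on the complement one uses $|\Phi|\le\min(|\log|\cdot||,\|K_n\|_\infty)$ together with the lower bound $e^{-n^{c_1}}$ transferred to the point $z$ by a Jensen/root‑counting argument exactly as in the proof of Lemma~\ref{l.badevent}. The delicate point is the bookkeeping: one must choose the exceptional threshold $N_0$, the interpolation exponent, and the small parameters $\delta,\delta',c_1$ (and use that $\max_b\|K_n^{(b)}\|_\infty$ is a fixed power of $n$) so that every error is $\ll n^{-\alpha_1\ep/4}\max_b\|K_n^{(b)}\|_\infty$; it is these interpolation and case‑split losses that degrade the natural exponent $\ep/2$ down to $\alpha_1\ep/4$. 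The general ($k>1$) statements feeding into Theorem~\ref{thm:general:complex} follow by the same scheme with multi‑index Taylor expansions and multilinear moment matching, as in \cite{TVpoly,nguyenvurandomfunction17,do2019}.
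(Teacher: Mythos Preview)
Your Lindeberg telescoping is the right framework, but you try to run the swap directly on the singular function $\Phi=K_n(\log|\cdot|)$ and defer the singularity to a per-step case split on $d_m$. This is where the sketch has a real gap. On the event $\{d_m\text{ small}\}$ you only have the crude bound $2\|K_n\|_\infty$ for the $m$th summand, so after summing over $n$ swaps you need $\P(d_m\text{ small})=o(1/n)$. Your only floor-free source of anti-concentration at the fixed point $z$ is the Gaussian block $\sum_{i<m}\widetilde\xi_i p_i(z)$; for small $m$ its variance is $O(1)$ (Lemma~\ref{lm:kernel} controls only the full sum $\sum_{i\le n}|p_i|^2$, not partial sums), so it gives nothing close to $o(1/n)$. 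The alternatives you list do not help at a fixed $z$: Lemmas~\ref{cond-smallball} and~\ref{lm:anti:general} produce a lower bound at \emph{some} nearby $x'$, not at $z$, and the Jensen/root-counting argument of Lemma~\ref{l.badevent} yields an $L^2$ bound over a disk, again not a pointwise lower bound at $z$. Even ignoring the small-$m$ issue, the balancing between the threshold on $d_m$ and the blow-up $|\nabla^a\Phi(w)|\sim|w|^{-a}$ on the good event does not close: pushing the bad-event probability below $1/n$ forces the good-event threshold so low that the Taylor contribution explodes.

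The paper sidesteps all of this with one move: mollify \emph{before} swapping. After normalizing by $\sqrt{V_n(z)}$, set $k_n(t)=K_n(t+\tfrac12\log V_n(z))$ and $H_n(w)=\psi(\log|w|)\,k_n(\log|w|)$, where $\psi$ is a smooth step vanishing on $(-\infty,-M-1]$ and equal to $1$ on $[-M,\infty)$. Then $H_n$ is globally $C^3$ with $\|H_n^{(\alpha)}\|_\infty\ll e^{3M}\max_{a\le 3}\|K_n^{(a)}\|_\infty$, and the swap is the textbook one: the $j$th step costs $O\big((|p_j(z)|/\sqrt{V_n(z)})^{2+\ep}\big)\max_\alpha\|H_n^{(\alpha)}\|_\infty$, and by Lemma~\ref{cond-delocal} these sum to $O(n^{-\alpha_1\ep}e^{3M})\max_a\|K_n^{(a)}\|_\infty$ with no case analysis. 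The mollification error is at most $\|K_n\|_\infty\,\E h_n(F_n/\sqrt{V_n})$ with $h_n(w)=1-\psi(\log|w|)$; since $h_n$ is also globally smooth with the same derivative bounds, one runs the \emph{same} swap on $h_n$ to reduce to $\E h_n(\widetilde F_n/\sqrt{V_n})\le \P\big(|\widetilde F_n(z)|/\sqrt{V_n(z)}\le e^{-M}\big)\ll e^{-M}$, a trivial Gaussian small-ball bound at the fixed $z$. Choosing $M=\tfrac{\alpha_1\ep}{4}\log n$ balances $n^{-\alpha_1\ep}e^{3M}$ against $e^{-M}$ and gives \eqref{e.lindeberg}. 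Note that Lemmas~\ref{cond-bddn}, \ref{cond-smallball}, \ref{lm:anti:general} and~\ref{l.badevent} are not used in this lemma at all; they enter the proof of Theorem~\ref{thm:general:complex} only elsewhere.
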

\proof Fix $z$. Note that $V_n(z):= Var[F_n(z)]$  does not depend on the distribution of the coefficients. We shall let $k_n(x)=K_n(x+\frac 1 2 \log V_n(z))$, then it is clear that
$$\|k_n^{(a)}\|_{\infty} =\|K_n^{(a)}\|_{\infty}$$ 
for all  $a\ge 0$.

We now approximate $k_n(\log |.|)$ with a nicer function $H_n(.)$. For some $M>0$ to be chosen later (that may depend on $n$), we consider a smooth step function $\psi:\R\to [0,1]$ such that $\psi=0$ on $(-\infty, -M-1]$ and $\psi=1$ on $[-M,\infty)$. Such a function can be easily constructed such that
$$\|\psi^{(a)}\|_{\infty}=O_a(1), \quad a\ge 0.$$
We note that $\psi^{(a)}$ is supported on $[-M-1, -M]$ if $a\ge 1$. We then define
$$H_n(w)= \psi(\log |w|) k_n(\log |w|), \quad w\in \C.$$
Viewing $H_n$ as a function on $\R^2$ (instead of $\C$), for any 2-index $\alpha$ with $|\alpha|\le 3$ we have
\begin{eqnarray*}
\|H_n^{(\alpha)}\|_{\infty} &\ll& e^{3M} \max_{0\le a\le 3} \| k_n^{(a)}\|_{\infty}
\end{eqnarray*}

For each $0\le j \le n $, let 
$$\varphi_j(z)=\sum_{k< j} \widetilde \xi_{k} p_k(z)+ \sum_{k > j} \xi_k p_k(z).$$

We then estimate
\begin{eqnarray*}
&&  \left |\mathbb E  H_n \left (\frac {F_n(z)}{\sqrt{V_n(z)}}\right ) - \E H_n \left (\frac{\widetilde F_{n}(z)}{\sqrt{V_n(z)}}\right )\right  |\\
&\le& \sum_{j=0}^n  \left |\mathbb E H_n \left (\frac{\varphi_j(z) + \xi_j p_j(z)}{\sqrt{V_n(z)}}\right ) - \mathbb E  H_n\left  ( \frac{\varphi_j(z) +  \widetilde \xi_j  p_j(z)}{\sqrt{V_n(z)}}\right ) \right |.
\end{eqnarray*}

For each $1\le j\le n$, let $H_{j,n}(w)= H_n\left (\frac {\varphi_j(z)}{\sqrt{V_n(z)}}+w\right )$ and view this as a function on $\R^2$. 

For each multiindex $\alpha$, we write $H^{(\alpha)}_{j,n}$ for the corresponding mixed partial derivative. 

Let $\Delta_{j,n}$ be the error term in the Taylor expansion around $(0,0)$ for $H_{j,n}$ up to second order.  Using the mean value theorem, it is clear that the following holds for both $s=2$ and $s=3$:
\begin{eqnarray*}
|\Delta_{j,n}| &\ll& \left (\frac{|p_j(z)|}{\sqrt{V_n(z)}}\right )^s  (|\xi_j|^s +|\widetilde \xi_j|^s)\sum_{|\alpha|=s}\|H_{j,n}^{(\alpha)}\|_{\infty}.
\end{eqnarray*}

Using the fact that $\xi_j$ and $\widetilde \xi_j$ have matching moments up to order $2$,  for any $t\in [2,3]$ we have
\begin{eqnarray*}
\left  | \mathbb E  H_{j,n} \left (\frac{\xi_j p_j(z)}{\sqrt{V_n(z)}}\right ) -\mathbb E  H_{j,n} \left ( \frac{\widetilde \xi_j  p_j(z)}{\sqrt{V_n(z)}}\right ) \right |  
&\ll& \left (\frac{|p_j(z)|}{\sqrt{V_n(z)}}\right )^t \E (|\xi_j|^t +|\widetilde \xi_j|^t)\sum_{2\le |\alpha|\le 3}\|H_{j,n}^{(\alpha)}\|_{\infty}. 
\end{eqnarray*}
Take $t=2+\ep$ where $\ep>0$ such that $\E |\xi_j|^{2+\ep}=O(1)$. Using Lemma~\ref{cond-delocal} and properties of $H_{j,n}$ we can bound the last display by
$$\ll n^{-\alpha_1 \ep} \left (\frac{|p_j(z)|}{\sqrt{V_n(z)}}\right )^{2} \max_{|\alpha|\le 3} \|H^{(\alpha)}_n\|;$$
therefore by summing over $j$, we obtain
\begin{eqnarray*}
  \left |\mathbb E  H_n \left (\frac {F_n(z)}{\sqrt{V_n(z)}}\right ) - \E H_n \left (\frac{\widetilde F_{n}(z)}{\sqrt{V_n(z)}}\right ) \right | 
&\ll&   n^{-\alpha_1 \ep}  \max_{|\alpha|\le 3} \|H^{(\alpha)}_n\|.
\end{eqnarray*}

Let $h_n(w):=1-\psi(\log |w|)$ which is a nonnegative function. Then
$$\Big|\E \Big [(k_n -\psi k_n)(\log|\frac{F_n(z)}{\sqrt {V_n(z)}}|) \Big] \Big| \le \|K_n\|_{\infty} \E \Big(h_n(\frac{F_n(z)}{\sqrt {V_n(z)}})\Big).$$
We now apply the same (swapping) argument as before for $h_n$ instead of $H_n$.  It follows that
\begin{eqnarray*}
\E \left (h_n\left (\frac{F_n(z)}{\sqrt {V_n(z)}}\right ) \right )
&=& \E \left (h_n\left (\frac{\widetilde F_n(z)}{\sqrt {V_n(z)}}\right ) \right ) + O\left (n^{-\alpha_1\ep} \max_{|\alpha|\le 3} \|^{(\alpha)}_n\|_{\infty}\right )\\
 &\ll& \P\left (\frac{|\widetilde F_n(z)|}{\sqrt {V_n(z)}}\le e^{-M}\right ) + O(n^{-\alpha_1\ep} e^{3M})\\
 &\ll&  e^{-M} + n^{-\alpha_1\ep} e^{3M}.
\end{eqnarray*}

Collecting estimates, it follows that
\begin{eqnarray*}
  |\mathbb E  K_n (F_n(z))  - \E K_n (\widetilde F_{n}(z)) | 
&\ll&  (n^{-\alpha_1 \ep}  e^{3M} + e^{-M}) \max_{0\le a\le 3} \|K^{(a)}_n\|.    
\end{eqnarray*}
By taking $M=\log(n^{\alpha_1\ep/4})$, we obtain \eqref{e.lindeberg}. This completes the proof of Lemma~\ref{l.lindeberg} \endproof

Using Lemma~\ref{l.lindeberg} and the triangle inequality, it follows that
\begin{eqnarray*} 
 \left  |\E \int  K_n(\log|F_n(z)|)  H(z)dz -  \E \int  K_n(\log|\widetilde F_{n}(z)|)  H(z)dz\right |  
&\ll& n^{-\alpha_1 \ep/4} M_n^{2}.
\end{eqnarray*}

Collecting estimates, we obtain
\begin{eqnarray*}
&& |\E \int_{\mathbb C}\log |F_n(z)|  H(z)dz - \E \int_{\mathbb C}\log |\widetilde F_{n}(z)|  H(z)dz|\\
&\ll& n^{1-A}  + M_n n^{-A} + M_n^{-1} n^{c} +  n^{-\alpha_1 \ep/4} M_n^{2}  \\
&\ll& n^{-c'}
\end{eqnarray*}
by choosing $M_n=n^{c'}$ with $c'>0$ small and $A>0$ sufficiently large. \endproof

\section{Proof of local universality for real roots, Theorem~\ref{thm:general:real}} \label{sec:proof:real}
Before going into the proof, let us again start with a road map.
\begin{itemize}
	\item Step 1: reduce to the basic case $k=1$ and $G$ is a smooth function of one real variable.
	\item Step 2: in order to reduce to the case that $G$ is a smooth function of one complex variable, we try to extend $G$ continuously to the complex plane and show that one does not lose much during the process. In other words, we need to show that the number of complex roots very close the real line is negligible. This can be explained by the so-called repulsion property of the roots: they tend to repel each other. So, if there is at least one such complex root $\zeta$, its complex conjugate $\bar \zeta$, which is another root, stays very close to $\zeta$; and that is quite unlikely. So, in this step, we provide an upperbound for the probability of having two close roots.
	
	\item Step 3: to finish, we implement the extension of $G$ to the complex plane as mentioned above and use Theorem \ref{thm:general:real}.
\end{itemize}

\subsection{Step 1: Reduction}
We first describe some mild simplifications in our proof. We first note that since $\xi_j$ are real the complex roots of $F_n$ are symmetric with respect to the real line. Without loss of generality we may assume that $\widetilde \xi_j$ are Gaussian. For simplicity, we will also assume that for all $j=1,..,n$ the coefficients $\xi_j$ and $\widetilde \xi_j$ satisfy the matching moment Condition {\bf C1}. In other words, we assume that $N_0$ in Condition {\bf C1} equals 1. The proof for general constants $N_0$ is similar.  Furthermore, we will only consider test functions of tensor form $G(\eta_1,...., \eta_k)=G_1(\eta_1)...G_k(\eta_k)$. We will also only show the details for the case $k=1$, the general case is entirely similar for these test functions. 

\subsection{Step 2: Repulsion} The key idea in the proof is the following observation: with high probability the roots of $F_n$ in a thin strip around local real intervals  are all real. Thanks to this observation, one could approximate linear statistics of local real roots for $F_n$ by some linear statistics of local complex roots for $F_n$, and thus the desired real universality estimates follow  from  Theorem~\ref{thm:general:complex}.  

To formulate the observation, we will prove
\begin{lemma}\label{l.repulsion} If $c_2>0$ is sufficiently small, then uniform over $x\in I+(-c/n,c/n)$, we have
\begin{equation}\label{e.repulsion}
P(N(B(x,n^{-(1+c_2)})) \ge 2) =O(n^{-\frac 3 2 c_2}).
\end{equation}
\end{lemma}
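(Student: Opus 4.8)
\emph{Strategy.} The plan is to show that if the tiny disk $B(x,n^{-(1+c_2)})$ contains two zeros of $F_n$, then $F_n(x)$ (and $F_n'(x)$) must be abnormally small relative to the size of $F_n$ on the much larger disk $B(x,c/n)$, and then to rule this out by an anticoncentration estimate together with a polynomial tail bound for $\sup_{B(x,c/n)}|F_n|$. Write $r=n^{-(1+c_2)}$ and $\rho=c/n$, so $r\le\rho/2$ for $n$ large. Suppose $z_1,z_2\in B(x,r)$ are zeros of $F_n$ (counted with multiplicity, possibly $z_1=z_2$). Then $g(z):=F_n(z)/\big((z-z_1)(z-z_2)\big)$ is entire, and since $|(z-z_1)(z-z_2)|\ge(\rho-r)^2\ge\rho^2/4$ on $|z-x|=\rho$, the maximum principle gives $|g(x)|\le 4\rho^{-2}\sup_{|z-x|=\rho}|F_n|$; as $|x-z_i|\le r$,
\begin{equation*}
|F_n(x)|=|g(x)|\,|x-z_1|\,|x-z_2|\le 4\Big(\tfrac{r}{\rho}\Big)^{2}\sup_{|z-x|=\rho}|F_n|\ \ll\ n^{-2c_2}\,S,\qquad S:=\sup_{B(x,\rho)}|F_n| .
\end{equation*}
(Equivalently this is Jensen's formula for $F_n$ on $B(x,\rho)$: two zeros in $B(x,r)$ force $\frac1{2\pi}\int_0^{2\pi}\log|F_n(x+\rho e^{i\theta})|\,d\theta-\log|F_n(x)|\ge 2c_2\log n-O(1)$; phrasing it this way avoids any case distinction on $z_1=z_2$.) An identical estimate with $g'$ in place of $g$ gives $\{N(B(x,r))\ge 2\}\subseteq\{|F_n(x)|\ll n^{-2c_2}S\}\cap\{|F_n'(x)|\ll n^{1-c_2}S\}$.

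\emph{Probabilistic input.} For a threshold $T$ split on $\{S\ge T\}$ versus $\{S<T\}$. For the first: from $|p_j(z)|=O(1)$ on $B(x,2\rho)$ (see \eqref{eq:bound:pj}), the Marcinkiewicz--Zygmund/Rosenthal inequality and the uniform $(2+\ep)$-moment bound give $\E|F_n(z)|^{2+\ep}=O(n^{1+\ep/2})$ there; since $|F_n|^{2+\ep}$ is subharmonic, the sub-mean-value inequality yields $\E\,S^{2+\ep}=O(n^{1+\ep/2})$, so $\P(S\ge T)\ll T^{-(2+\ep)}n^{1+\ep/2}$. For the second, on $\{S<T\}$ the reduction gives $|F_n(x)|\ll Tn^{-2c_2}$ and $|F_n'(x)|\ll Tn^{1-c_2}$; using the variance lower bound $\sum_j|p_j(x)|^2\asymp n$ (Lemma~\ref{lm:kernel}), the derivative bound $\sum_j|p_j'(x)|^2\ll n^3$ (Lemma~\ref{cond-repulsion}), the nondegeneracy $V_0V_1-V_{01}^2\asymp n^4$ of the confluent Christoffel--Darboux kernel (here $V_0=\sum p_j(x)^2$, $V_1=\sum p_j'(x)^2$, $V_{01}=\sum p_j(x)p_j'(x)$; this is precisely what makes the Gaussian Kac--Rice density $\asymp n$, cf.\ Theorem~\ref{t.lpx}), and a two–dimensional Berry--Esseen bound for the non--i.i.d.\ sum $(F_n(x),F_n'(x))$, one gets $\P(|F_n(x)|\le\lambda_1,\,|F_n'(x)|\le\lambda_2)\ll n^{-2}\lambda_1\lambda_2+n^{-\ep/2}$, hence the second event has probability $\ll T^2n^{-1-3c_2}+n^{-\ep/2}$. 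Thus $\P(N(B(x,r))\ge2)\ll T^2n^{-1-3c_2}+T^{-(2+\ep)}n^{1+\ep/2}+n^{-\ep/2}$; choosing $T=n^{(2+\ep/2+3c_2)/(4+\ep)}$ balances the first two terms to $\ll n^{-3c_2(2+\ep)/(4+\ep)}\le n^{-\frac32 c_2}$, and $n^{-\ep/2}\le n^{-\frac32 c_2}$ once $c_2\le\ep/3$ — which is where ``$c_2$ sufficiently small'' enters.

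\emph{Main obstacle.} The delicate point is exactly this last optimization. Because only $(2+\ep)$ moments are available, the tail of $\sup_{B(x,c/n)}|F_n|$ decays only polynomially, so one cannot first restrict to a high-probability event on which this supremum sits near its typical size $\asymp\sqrt n$; one is forced to trade the tail of $S$ against the anticoncentration gain, and this trade-off is what requires the sharper \emph{joint} small-ball estimate for $(F_n(x),F_n'(x))$ — using $F_n(x)$ alone yields only the exponent $\tfrac{2(2+\ep)}{3+\ep}c_2$, which is $<\tfrac32 c_2$ when $\ep<1$. Accordingly, the technical heart of the proof is (i) supplying the nondegeneracy $V_0V_1-V_{01}^2\asymp n^4$ of the Christoffel--Darboux kernel on compact subintervals of the interior of $\K$ (note $V_0V_1-V_{01}^2=\sum_{i<j}(p_i(x)p_j'(x)-p_j(x)p_i'(x))^2$ by Lagrange's identity), and (ii) the quantitative two-dimensional Berry--Esseen bound for $(F_n(x),F_n'(x))$ under merely $(2+\ep)$ moments via the delocalization $|p_j(x)|=O(1)$ of Lemma~\ref{cond-delocal}; everything else is routine complex-analytic bookkeeping of the type already used in Lemma~\ref{l.badevent}.
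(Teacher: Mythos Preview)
Your overall strategy is coherent and the arithmetic of the final optimization is correct, but the approach hinges on an ingredient that is \emph{not} available under the paper's hypotheses: the pointwise nondegeneracy $V_0(x)V_1(x)-V_{01}(x)^2\asymp n^4$ uniformly on compact subintervals of the interior of $\K$. You acknowledge this as the ``technical heart,'' but it is not a matter of bookkeeping. The paper only assumes $C_I^{-1}\le w\le C_I$ a.e.\ on compact $I$ and $\sup_n\sup_I|p_n|\le C_I$; no continuity of $w$ at any point is required. Pointwise sine-kernel universality for the Christoffel--Darboux kernel (which is what drives the lower bound on the determinant) typically needs at least Lebesgue-point continuity of $w$, and the paper never proves, or needs, such a statement. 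Your appeal to Theorem~\ref{t.lpx} is not enough: that theorem gives only the integrated asymptotic $\E \widetilde N_n([a,b])\sim \frac{n}{\sqrt3}\nu_\K([a,b])$, which certainly does not imply a uniform pointwise lower bound on the Kac--Rice density. And as you yourself compute, without the 2D joint small-ball the 1D route only gives exponent $\frac{2(2+\ep)}{3+\ep}c_2$, which falls short of $\tfrac32 c_2$ once $\ep<1$. So the proposal has a genuine gap at exactly the place you identify.

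The paper sidesteps this entirely, and the mechanism is instructive. It does \emph{not} attack $F_n$ directly. Instead it first invokes the already-proved complex universality (Theorem~\ref{thm:general:complex}, $k=1,2$) to transfer $\P(N\ge 2)\le \E N(N-1)$ to the Gaussian polynomial $\widetilde F_n$ up to an $n^{-c}$ error; for Gaussians it then uses Rouch\'e with the linearization $\ell(z)=\widetilde F_n(x)+(z-x)\widetilde F_n'(x)$. The crucial point is that $\min_{|z-x|=2\gamma}|\ell(z)|=\min(|\ell(x-2\gamma)|,|\ell(x+2\gamma)|)$, and each $\ell(x\pm2\gamma)$ is a \emph{single} real Gaussian with variance $\sum_j(p_j(x)\pm2\gamma p_j'(x))^2$; since $\gamma=n^{-(1+c_2)}$ and $\sum|p_j'|^2\ll n^3$, this variance is $\sum p_j(x)^2+O(n^{1-c_2})\asymp n$ by Lemma~\ref{lm:kernel} alone. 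So only a one-dimensional Gaussian small-ball bound is needed, and the determinant $V_0V_1-V_{01}^2$ never appears. The transfer to Gaussians also eliminates the Berry--Esseen error $n^{-\ep/2}$ and the $S^{2+\ep}$ tail trade-off, replacing them by Gaussian concentration for the remainder $\delta(z)=\widetilde F_n(z)-\ell(z)$.

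In short: your direct route is blocked because the covariance nondegeneracy it requires is not provable from the paper's assumptions; the paper's route is to bootstrap from the complex universality it has already established, which reduces the repulsion estimate to a purely Gaussian, purely one-dimensional small-ball bound that needs only $V_0\asymp n$.
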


We now go into the proof of Lemma~\ref{l.repulsion}. Let $c_2>0$ be small, and $\gamma=n^{-(1+c_2)}$. Let $N$ be the number of roots of $F_n$ in $B(x,\gamma)$, and let $\widetilde N$ be the number of roots for $\widetilde F_n$ in $B(x,1.01\gamma)$. Let $\varphi:\C\to[0,1]$ be a bump function supported inside $B(x,1.01\gamma)$ which is equal to $1$ on $B(x,\gamma)$.
We now use Theorem~\ref{thm:general:complex} (for $k=1,2$) for  $G(z)=n^{Cc_2}\varphi(z)$ and $G(z,w)=n^{Cc_2}\varphi(z)\varphi(w)$ for some large absolute constant $C$ (independent of $c_2$). It follows that, for some  constants  $c>0$ (independent of $c_2$), we have
\begin{eqnarray*}
P(N\ge 2) &\le& \E N (N -1) \\
&\ll& n^{-Cc_2} n^{-c}  + \E \widetilde N(\widetilde N-1)\\
&\ll&  n^{-Cc_2} n^{-c}  + \E [\widetilde N^2 1_{N \gg n^{c_1}}] + n^{2c_1}\P(\widetilde N\ge 2),
\end{eqnarray*}
where $c_1>0$ is a very small constant compared to $c_2$. Using   Lemma~\ref{cond-smallball}, and Lemma~\ref{cond-bddn} and Jensen's formula, we obtain
$$P(\widetilde N \ge C' n^{c_1}) \quad \le \quad  P(N_{\widetilde F_n}(B(x,\frac c n)) \ge C' n^{c_1})   \quad \ll \quad n^{-A}$$
for some $C'>0$ and any $ A>0$, both are independent of $c_1,c_2$.
Thus, using Lemma~\ref{cond-poly} we obtain
\begin{eqnarray*} 
  \E [\widetilde N^{2} 1_{\widetilde N \ge C' n^{c_1}}]   &\ll&  n^{-c'}.
\end{eqnarray*}
for some constants $c'>0$, $C'>0$ (both are independent of $c_1,c_2$). Thus, by choosing $c_2$ sufficiently small compared to $c$ and $c'$, we obtain
\begin{eqnarray*}
\P(N\ge 2) &\ll& n^{-\frac 3 2c_2} + n^{2c_1}\P(\widetilde N\ge 2).
\end{eqnarray*}
Since we choose $c_1$ very small compared to $c_2$, it suffices to prove that
$$ \P(\widetilde N\ge 2) \ll n^{-(\frac 3 2+\ep)c_2},$$
for some $\ep>0$. We'll show it for $\ep=0.1$.  Consider the Taylor polynomial 
$$\ell(z)=\widetilde F_n(x)+(z-x)\widetilde F_n'(x)$$ and let $\delta(z)=\widetilde F_n(z)-\ell(z)$. Since $\ell$ has at most one root, by Rouch\'e's theorem we have
$$\P(\widetilde N\ge 2) \le \P\left (\min_{|z-x|=2\gamma} |\ell(z)|\le \max_{|z-x|=2\gamma} |\delta(z)|\right ).$$
For convenience of notation, in the rest of the proof, let 
$$T= n^{-2c_2+c_1} (\sum_j |p_j(x)|^2)^{1/2}.$$
Now, using Lemma~\ref{cond-repulsion}, we have
$$\max_{|z-x|\le 4\gamma}  Var[\delta(z)] \ll \gamma^4 \sum_{j=1}^n \sup_{|w-x|\le 4\gamma} |p_j^{''}(w)|^2 \ll n^{-c_1}T^2.$$
Since $\delta(z)$ is Gaussian (for any fixed $z$) with zero mean, using convexity of the $\exp$ function and Jensen's inequality, we have
$$\E \left  [\exp\left (\frac 1{\gamma}\int_{|z-x|=4\gamma} \frac{|\delta(z)|}{\sqrt{Var[\delta(z)]}} ds\right ) \right ]\le \frac 1{\gamma}\int_{|z-x|=4\gamma}  \E \left  [\exp\left (\frac{|\delta(z)|}{\sqrt{Var[\delta(z)]}} ds\right )\right ] \ll 1$$
($\int \cdot ds$ is the usual line integral). Using Cauchy's integral formula, it follows that
$$\P\left (\max_{|z-x|=2\gamma} |\delta(z)|>T\right ) \ll \exp\left (-\frac{cT}{\max_{|z-x|=4\gamma} \sqrt{Var[\delta(z)]}}\right )  \ll \exp(-cn^{c_1/2})  \ll n^{-1.6c_2}.$$

On the other hand, using linearity of $\ell$ we have 
$$\min_{|z-x|=2\gamma}|\ell(z)| = \min(|\ell(x-2\gamma)|,|\ell(x+2\gamma)|).$$
Using Lemma~\ref{cond-repulsion} again and the fact that $c_1\ll c_2$, it follows that
$$Var[\ell(x+2\gamma)] = \sum_{j=1}^n (p_j(x)+2\gamma p'_j(x))^2 \gg \sum_j  p_j(x)^2 = T^2 n^{-2c_1+4c_2},$$
therefore by standard Gaussian estimates for any $c>0$, we have
$$\P(|\ell(x+2\gamma)|\le cT) \ll n^{-2c_2+c_1} \ll n^{-1.6c_2}.$$
A similar estimate holds for $\ell(x-2\gamma)$ by the same argument. Thus,
$$\P\left (\min_{|z-x|=2\gamma}|\ell(z)| < CT<\max_{|z-x|=2\gamma} |\delta(z)|\right ) \ll n^{-1.6c_2} ,$$
as desired. This completes the proof of Lemma~\ref{l.repulsion}. \endproof

\subsection{Step 3: Finish the proof}
Using Lemma \ref{l.repulsion}, the rest of the proof goes as follows.  Let $c_2>0$ be small, and $\gamma=n^{-(1+c_2)}$, and let $S_1$ be the rectangle
$$S_1:=[x_1-\frac c{n}, x_1+\frac c {n}]\times [-\frac {\gamma}{4}, \frac {\gamma}{4}].$$
We may cover $S_1$ using $O(n^{-1}/\gamma)=O(n^{c_2})$ balls of radius $\gamma$ with center inside $I+(-c/n,c/n)$. It follows that with high probability there are no roots of $F_n$ in $S_1\setminus \R$:
\begin{equation}\label{e.obs}
\P(N(S_1\setminus \R)\ge 1) = O(n^{-\frac 3 2 c_2} n^{c_2}) = O(n^{-c_2/2}).
\end{equation}
This is the observation we mentioned above. Now, let $\phi:\R \to [0,1]$ be smooth, supported on $[-1,1]$ with $\phi(0)=1$. Let $H(x+iy)=G_1(x)\phi(\frac{4y}{\gamma})$ be a function on $\C$. Recall that $\zeta_1,\dots,$ denote the complex roots of $F_n$. Let 
$$X=\sum_{j} H(\zeta_j), \ \ Y=\sum_{\zeta_j\in \R} G_1(\zeta_j).$$
Since universality holds for the complex roots of $F_n$ (Theorem~\ref{thm:general:complex}), it suffices to show that $\E |X -  Y| = O(n^{-c})$ for some $c>0$. We could actually show, for any $k\ge 1$,
\begin{eqnarray}\label{e.Nmoment}
\E |X -  Y|^k = O_k(n^{-c}),
\end{eqnarray}
for some $c>0$ that may depend on $k$. To see this, first note that $|X-Y|\le C N(S_1\setminus \R)$
for some $C>0$. Consequently, thanks to the observation \eqref{e.obs},
\begin{eqnarray*}
	\P(X\ne Y)   &\ll&  n^{-c_2/2}.
\end{eqnarray*}
Thus, using H\"older's inequality, to show \eqref{e.Nmoment} it suffices to show the weaker estimate
$$\E |X-Y|^k = O_k (1).$$
Note that $|X-Y|  \ll N(B(x_1,\frac c {n}))$. Using Jensen's bound, for $c>0$ sufficiently small it holds that
$$N\left (B\left (x_1,\frac c {n}\right )\right ) \ll \log  \max_{w\in B(x_1, \frac2n)} |F_n(w)| - \log \max_{w\in B(x_1, \frac 1{5n})} |F_n(w)|.$$
Using Lemma~\ref{cond-smallball} and Lemma~\ref{cond-bddn}, it follows that for some $C>0$ the following holds for all $A>0,c_1>0$ 
$$\P(|X-Y| > C n^{c_1}) \ll n^{-A}.$$
Also, using Lemma~\ref{cond-poly} there we have $|X-Y|=O(n)$.  
It follows that
\begin{eqnarray*}
	\E |X-Y|^k &\ll&    n^{k} \P(|X-Y|\ge C n^{c_1})  + n^{c_1k} \P(|X-Y|\ne 0)\\ 
	&\ll& 1
\end{eqnarray*}
by choosing $c_1$ sufficiently small (compared to $c_2$) and $A$ sufficiently large.  This completes the (conditional) proof of Theorem~\ref{thm:general:real}.

\section{Proof of Theorem \ref{thm:local:expectation} and Corollary \ref{cor:local:expectation}}\label{sec:pf:local}
\textit{Proof of Theorem \ref{thm:local:expectation}.}	Our idea is to apply Theorem \ref{thm:general:real} to the indicator function $\textbf{1}_{[a, b]}$. However, since the indicator function is not smooth, we will approximate it below and above by smooth functions, say $\hat G_1$ and $\hat G_2$. To this end, let $\alpha = n^{-1-c/12}$ where $c$ is the constant in Theorem \ref{thm:general:real} and let $\hat G_1, \hat G_2:\R \to \R$ be any functions with continuous sixth derivative and satisfy the following
\begin{itemize}
	\item $\textbf{1}_{[a-\alpha, b+\alpha]}\ge \hat G_2\ge \textbf{1}_{[a, b]}\ge \hat G_1\ge  \textbf{1}_{[a+\alpha, b-\alpha]}$ \quad pointwise,
	\item $||F^{a}_i||_{\infty}\ll \alpha^{-a} \ll n^{a+c/2}$ for all $i=1, 2$ and $1\le a\le 6$.
\end{itemize}
Applying Theorem \ref{thm:general:real} to $G_i = n^{-c/2} \hat G_i$ (where the factor $n^{-c/2}$ is added so that $||G^{a}_i||_{\infty}\ll n^{-a}$ as in the hypothesis), we obtain 
\begin{eqnarray} 
 \E\sum G_1\left (\zeta_{i} \right) 
-\E\sum G_1\left (\tilde \zeta_{i} \right) \ll n^{-c},\quad \E\sum G_2\left (\zeta_{i} \right) 
-\E\sum G_2\left (\tilde \zeta_{i} \right) \ll n^{-c}\nonumber
\end{eqnarray} 
which gives
\begin{eqnarray} \label{eq:F:12:0}
\E\sum \hat G_1\left (\zeta_{i} \right) 
-\E\sum \hat G_1\left (\tilde \zeta_{i} \right) \ll n^{-c/2}, \quad \E\sum \hat G_2\left (\zeta_{i} \right) 
-\E\sum \hat G_2\left (\tilde \zeta_{i} \right) \ll n^{-c/2}.
\end{eqnarray} 
We shall prove later that 
\begin{equation} \label{eq:F:12}
\E\sum  \hat G_1 \left (\tilde\zeta_{i} \right)  - \E\sum  \hat G_2\left (\tilde\zeta_{i} \right) \ll n^{-c/24}.
\end{equation}
Since $\sum \hat G_1 \left (\zeta_{i} \right)  \le N[a, b]\le \sum \hat G_2\left (\zeta_{i} \right)  $,  we conclude from \eqref{eq:F:12:0} and \eqref{eq:F:12} that 
\begin{equation} 
\E N[a, b] - \E \tilde N[a, b] \ll n^{-c/24}\nonumber
\end{equation}
as claimed.

To prove \eqref{eq:F:12}, we note that $0\le \hat G_2 - \hat G_1\le \textbf{1}_{[a-\alpha, a+\alpha]} + \textbf{1}_{[b-\alpha, b+\alpha]} $, thus it suffices to show the following
\begin{lemma}
	Let $I\subset K_{\ep'}$ be an interval of length $n^{-1-c/12}$. Then
	\begin{equation*}\label{key}
	\E \tilde N(I) \ll n^{-c/24}.
	\end{equation*}
\end{lemma}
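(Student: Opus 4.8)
The plan is to exploit that $\tilde F_n$ is a Gaussian field on $\R$ (its coefficients are i.i.d.\ standard Gaussians) and to estimate $\E\tilde N(I)$ directly from the Kac--Rice formula rather than through any comparison argument. Set
$$A_n(x)=\sum_{i=0}^n p_i(x)^2,\qquad B_n(x)=\sum_{i=0}^n p_i(x)p_i'(x),\qquad C_n(x)=\sum_{i=0}^n p_i'(x)^2,$$
so that $A_n(x)=\operatorname{Var}\tilde F_n(x)$, $B_n(x)=\operatorname{Cov}(\tilde F_n(x),\tilde F_n'(x))$ and $C_n(x)=\operatorname{Var}\tilde F_n'(x)$. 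By Lemma~\ref{lm:kernel} we have $A_n(x)\ge n/C'>0$ for every $x$ in the fixed compact set $\K_{\ep'}$, which lies in the interior of $\K$; hence the Gaussian vector $(\tilde F_n(x),\tilde F_n'(x))$ is non-degenerate throughout $I\subset\K_{\ep'}$, and the Kac--Rice formula (as used in \cite{lubinsky2016linear}) gives
$$\E\tilde N(I)=\int_I \rho_n(x)\,dx,\qquad \rho_n(x)=\frac1\pi\,\frac{\sqrt{A_n(x)C_n(x)-B_n(x)^2}}{A_n(x)}.$$

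Next I would bound the intensity $\rho_n$ by $n$, uniformly on $\K_{\ep'}$. Discarding the non-negative term $B_n(x)^2$ gives $\rho_n(x)\le \frac1\pi\sqrt{C_n(x)/A_n(x)}$, and Lemma~\ref{cond-repulsion} (the derivative-growth estimate), applied once and for all to the fixed compact set $\K_{\ep'}$, supplies a constant $C_0=C_0(\ep')$ with $C_n(x)\le C_0\, n^2 A_n(x)$ for all $x\in\K_{\ep'}$. Therefore $\rho_n(x)\le \frac{\sqrt{C_0}}{\pi}\,n$ for every $x\in\K_{\ep'}$ and every $n$. Since $I\subset\K_{\ep'}$ has length $n^{-1-c/12}$, integrating gives
$$\E\tilde N(I)=\int_I\rho_n(x)\,dx\le \frac{\sqrt{C_0}}{\pi}\,n\cdot n^{-1-c/12}\ll n^{-c/12}\ll n^{-c/24},$$
which is the claimed bound (with room to spare).

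I do not expect a genuine obstacle here; the only point that needs care is that the constants $C'$ and $C_0$ coming from Lemmas~\ref{lm:kernel} and~\ref{cond-repulsion} must not depend on the ($n$-dependent, shrinking) interval $I$. This is guaranteed because both lemmas are applied to the single fixed set $\K_{\ep'}$ --- which for every fixed $\ep'>0$ is a finite union of compact intervals contained in the interior of $\K$ --- and $I$ is a subinterval of it for all $n$. A secondary routine point is to justify the Kac--Rice identity for the polynomial $\tilde F_n$ on $I$, which is immediate from the smoothness of $\tilde F_n$ and the non-degeneracy $A_n>0$ on $I$; alternatively one may simply quote the Gaussian intensity computation already carried out in \cite{lubinsky2016linear}.
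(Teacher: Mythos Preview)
Your proposal is correct and follows essentially the same route as the paper: both apply the Kac--Rice formula for the Gaussian field $\tilde F_n$, discard the covariance term $B_n(x)^2$, and bound the resulting intensity $\rho_n(x)\le \frac1\pi\sqrt{C_n(x)/A_n(x)}\ll n$ uniformly on $\K_{\ep'}$ via the variance lower bound (Lemma~\ref{lm:kernel}) together with the derivative-growth estimate, then integrate over the interval of length $n^{-1-c/12}$. The paper phrases the kernels with the weight $w$ (writing $\tilde K_n^{(k,l)}=wK_n^{(k,l)}$), but the weight cancels in the ratio, so the computations are identical; your remark that the constants from Lemmas~\ref{lm:kernel} and~\ref{cond-repulsion} depend only on the fixed set $\K_{\ep'}$ and not on the shrinking $I$ is exactly the point that makes the bound uniform.
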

\begin{proof}
	By the Kac-Rice formula \cite{Kac1943average} (also \cite[Proposition 1.1]{lubinsky2016linear}), we have 
	\begin{eqnarray}\label{key}
	\frac{1}{n}\E \tilde N(I) &=& \int_{I} \frac{1}{\pi} \frac{\tilde K_{n}(x, x)}{n}\sqrt{\frac{\tilde K_{n}^{(1,1)}(x, x)}{\tilde K_{n}(x, x)^{3}} - \left (\frac{\tilde K_{n}^{(0,1)}(x, x)}{\tilde K_{n}(x, x)^{2}}\right )^{2}} dx  \nonumber\\
	&\le&\int_{I} \frac{1}{\pi} \frac{\tilde K_{n}(x, x)}{n}\sqrt{\frac{\tilde K_{n}^{(1,1)}(x, x)}{\tilde K_{n}(x, x)^{3}}  } dx  \nonumber
	\end{eqnarray}
	where
	$$K_{n}^{(k, l)}(x, x) =  \sum_{i=0}^{n} p_i^{(k)}(x) p_i^{(l)}(x), \quad \tilde K_{n}^{(k, l)}(x, x) = w(x)\sum_{i=0}^{n} p_i^{(k)}(x) p_i^{(l)}(x).$$

By Condition \eqref{cond:w}, $\tilde K_{n}^{(k, l)}(x, x)$ has the same order as $K_n^{(k, l)}(x, x)$.

Thus, by Condition \eqref{cond:pn}, we have
	\begin{equation*}\label{key}
 \sup_{x\in I} \frac{1}{n}\tilde K_{n}(x, x) \ll \sup _{n\ge 1}\sup_{x\in I} \frac{1}{n}  K_{n}(x, x)  \ll 1
	\end{equation*}
and by \eqref{cond:kernel:2}, 
 \begin{equation*}\label{key}
 \inf_{x\in I}  \tilde K^{3}_{n}(x, x) \gg n^{3}.
 \end{equation*}
By \eqref{eq:bound:pj:1dev}, we have
\begin{equation*}\label{key}
 \sup_{x\in I}  \tilde K^{(1, 1)}_{n}(x, x) \ll n^{3}.
\end{equation*}
	Therefore,  
	\begin{eqnarray*}\label{key}
	\frac{1}{n}\E \tilde N(I) &\ll&  \int_{I}  1 dx \ll |I| = n^{-1-c/12}
	\end{eqnarray*}
	as desired.
	\end{proof}

\begin{proof} [Proof of Corollary \ref{cor:local:expectation}]
	This corollary follows by decomposing the interval $[a, b]$ into $1+n(b-a)$ disjoint intervals of length at most $1/n$. Applying Theorem \ref{thm:local:expectation} to each of these intervals and adding up, we obtain
	\begin{equation*}\label{key}
		\E N_n(a, b)- \E \tilde N_n(a, b) \ll (1+n(b-a))n^{-c}
	\end{equation*}
which gives the desired bound.
	\end{proof}

\section{Proof of Theorem \ref{thm:edge}} \label{sec:pf:edge}
We shall use the following beautiful result from \cite{dauvergne2021necessary}. Let us denote by $\nu_n$ the empirical measure of the roots $(\zeta_i)$ of $F_n$
$$\nu_n = \frac{1}{n}\sum_{i=1}^{n} \delta_{\zeta_i}.$$
We recall that $\nu_\K$ is the equilibrium measure of $\K$
\begin{theorem} (\cite[Theorem 1.1]{dauvergne2021necessary})\label{thm:conv:nu}
	Under the hypothesis of Theorem \ref{thm:edge}, the sequence of empirical measures $\nu_n$ converges weakly to $\nu_{\K}$ with probability 1.
\end{theorem}
Here, we note that the hypothesis of Theorem \ref{thm:edge} guarantees that the measure $\mu$ is regular.

 Using this theorem and apply the same reasoning as in the proof of \cite[Corollary 2.5]{lubinsky2016linear}, we obtain the following.
\begin{proposition} (\cite[Lemma 3.3]{lubinsky2016linear}) \label{thm:expectation:limit} Under the hypothesis of Theorem \ref{thm:edge}, let $E\subset \mathbb{C}$  be a compact set satisfying $\nu_{\K}(\partial E)=0$ where $\partial E$ is the boundary of $E$ in $\C$.  Then
$$
\lim_{n\rightarrow\infty}\frac{\E N_{n}(E)}{n}=\nu_{\K}(E).
$$
\end{proposition}

%

Recall $\K_{\ep'} = \{x\in \K: [x- \ep', x+\ep']\subset \K\}$. Let  $A_1 = \partial \K+[-\ep', \ep']$ and $A_2 = \R\setminus (\K_{\ep'}\cup A_1)$.

We shall apply Proposition \ref{thm:expectation:limit} to $E = \partial \K +B_\C(0, \ep')$ where $B_\C(0, \ep')$ is the closed disk in the complex plane of radius $\ep'$. Since $\partial E_r\cap \K = \{a\pm r: a\in \partial \K\}$ is a collection of finitely many points and since $\nu_\K$ is continuous, $\nu_\K(\partial E) = 0$. Applying Proposition \ref{thm:expectation:limit} to $E$ and then send $\ep'$ to $0$, we obtain
$$
\lim_{\ep'\to 0}\lim_{n\rightarrow\infty}\frac{\E N_{n}(E)}{n}=\lim_{\ep'\to 0} \nu_{\K}(E) = 0.
$$
Since $A_1\subset E$,  this yields
$$
\lim_{\ep'\to 0}\lim_{n\rightarrow\infty}\frac{\E N_{n}(A_1)}{n} = 0.
$$
 
For $A_2$, let $F$ be strip $(\K+[-\ep', \ep'])\times [-\ep', \ep']$ that covers $\K$. Since $\nu_{\K}$ is supported on $\K$, $\nu_{\K}(\partial F)=0$ for sufficiently small $\ep'$. And so, 
$$  \lim_{n\rightarrow\infty}\frac{\E N_{n}(F)}{n}=\nu_{\K}(F)=\nu_{\K}(\K)=1.$$ 

Subtracting both sides from $1$, we obtain 
$$  \lim_{n\rightarrow\infty}\frac{\E N_{n}(\mathbb{C}\backslash F)}{n}=0$$ 
which implies that $$  \lim_{n\rightarrow\infty}\frac{\E N_{n}(A_2)}{n}=0$$ as $A_2\subset \C\setminus F$. This completes the proof.

\section{Proof of Theorems \ref{t.main1} and \ref{thm:real}} \label{sec:pf:main}

\begin{proof}[Proof of Theorem \ref{t.main1}]  For any compact interval $[a, b]$ in the interior of $\K$, by Corollary \ref{cor:local:expectation}, we have
\begin{equation*}\label{key}
	\frac{\E N_n(a, b)}{n} - 	\frac{\E \tilde N_n(a, b)}{n} \ll (1/n+(b-a))n^{-c} = o(1). \nonumber
\end{equation*}
By \eqref{eq4}, $\frac{\E \tilde N_n(a, b)}{n}$ converges to $\frac{1}{\sqrt{3}}\nu_K ([a,b])$. And hence $\frac{\E N_n(a, b)}{n}$ converges to the same limit.
\end{proof}

 \begin{proof}[Proof of Theorem \ref{thm:real}] 
We now move on to proving \eqref{eq:main1:1}. For every $\delta>0$, by Theorem \ref{thm:edge}, there exist $\ep'>0$ and $N>0$ such that for all $n>N$, 
\begin{equation}\label{eq:e1}
\frac{\E N_n(\R\setminus \K_{\ep'})}{n} <\delta.
\end{equation}
In the special case when $\xi_i$ are Gaussian, Theorem \ref{thm:edge} also holds and so we also have (with possibly a different $N$ which we then only need to increase $N$)
\begin{equation}\label{eq:e2}
\frac{\E \tilde N_n(\R\setminus \K_{\ep'})}{n} <\delta,\quad \forall n>N. 
\end{equation}

Since $\K_{\ep'}$ is a union of finitely many subintervals, applying Corollary \ref{cor:local:expectation} to each subintervals in $\K_{\ep'}$ and adding up yield
\begin{equation*}\label{eq:e33}
	\frac{\E N_n(\K_{\ep'})}{n} - 	\frac{\E \tilde N_n(\K_{\ep'})}{n} = o(1).  
\end{equation*}

Combining \eqref{eq:e1}, \eqref{eq:e2} and \eqref{eq:e33}, we obtain 
\begin{equation*}\label{eq:e3}
	\left |\frac{\E N_n(\R)}{n} - 	\frac{\E \tilde N_n(\R)}{n} \right |\le 3\delta
\end{equation*}
for sufficiently large $n$. By \eqref{eq2}, $\frac{\E \tilde N_n(\R)}{n} $ converges to $\frac{1}{\sqrt 3}$. So does $\frac{\E N_n(\R)}{n} $.
\end{proof}

\section{Proof of Theorem \ref{t.main2}} \label{sec:pf:2}

For this proof, we shall use the following classical results. The first result gives sufficient conditions for $p_n$ to be uniformly bounded on the entire support. We use Theorem 5.2 in Table I, page 245 of \cite{freudorthogonal} where we also refer to page 229 of \cite{freudorthogonal} for the definition of the function space. The remark containing (5.26) in \cite[page 244]{freudorthogonal} explains how to prove the aforementioned Theorem 5.2 (page 245) using the original version of \cite[Theorem 5.2)]{freudorthogonal} in page 232.
\begin{theorem} \cite[Theorem 5.2, page 245]{freudorthogonal} \label{thm:freud:1}
	Let $\alpha$ be an absolutely continuous measure supported on $[-1, 1]$ with density $\alpha'$. Suppose that there exist constants $c$ and $C$ so that $0\le c\le \alpha'(x) \sqrt{1-x^{2}}\le C$ for all $x\in (-1, 1)$ and $\alpha'$ satisfies the following Lipschitz condition
	\begin{eqnarray}\label{eq:lipschitz}
		\int_{-\pi}^{\pi}\left (\alpha'(\cos(\theta+h)) |\sin(\theta+h)|-\alpha'(\cos(\theta)) |\sin(\theta)|\right )^{2} d\theta &=& O(h) \quad\text{as } h\to 0.
	\end{eqnarray}
	Then the sequence of orthonormal polynomials with respect to $\alpha$ is uniformly bounded on $[-1, 1]$. 
\end{theorem}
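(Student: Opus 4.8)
Since each $p_n$ is a polynomial, hence continuous and bounded on the compact set $[-1,1]$, and there would be only finitely many ``small'' $n$ to worry about separately, it suffices to establish $\limsup_{n\to\infty}\|p_n\|_{L^\infty[-1,1]}<\infty$. The plan is to transport the problem to the unit circle $\mathbb{T}$ and run Szeg\H{o}'s theory. Set $g(\theta):=\alpha'(\cos\theta)\,|\sin\theta|$, an even $2\pi$-periodic weight; in this notation the first hypothesis reads $c\le g(\theta)\le C$, and condition \eqref{eq:lipschitz} is \emph{exactly} the statement $\|g(\cdot+h)-g\|_{L^2(\mathbb{T})}^2=O(h)$, i.e.\ $g$ lies in the Besov class $B^{1/2}_{2,\infty}(\mathbb{T})$. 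I would then invoke the classical Szeg\H{o} correspondence (Szeg\H{o}'s mapping) between the orthonormal polynomials $(p_n)$ for $\alpha$ on $[-1,1]$ and the orthonormal polynomials $(\varphi_m)$ on $\mathbb{T}$ for the symmetric measure proportional to $g(\theta)\,d\theta$ (normalized to be a probability measure, which leaves $g$ bounded above/below and in $B^{1/2}_{2,\infty}$); this yields, uniformly in $n$ and $\theta$,
\begin{equation*}
|p_n(\cos\theta)|\ \ll\ \bigl|\varphi_{2n}^*(e^{i\theta})\bigr|,\qquad \varphi_m^*(z):=z^{m}\overline{\varphi_m(1/\bar z)},
\end{equation*}
where the implied constant depends only on $c,C$ (it involves the ratio of leading coefficients and $D(0)$ below, both controlled once $g$ is bounded above and below). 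Since $|\varphi_m^*|=|\varphi_m|$ on $\mathbb{T}$, the theorem reduces to the single claim $\sup_{m\ge0}\|\varphi_m^*\|_{L^\infty(\mathbb{T})}<\infty$.

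To prove that, I would bring in the Szeg\H{o} function $D(z)=\exp\!\bigl(\tfrac{1}{4\pi}\int_{-\pi}^{\pi}\tfrac{e^{i\theta}+z}{e^{i\theta}-z}\log g(\theta)\,d\theta\bigr)$ for $z\in\mathbb{D}$. Since $\log g$ is bounded, $\log|D|$ is the Poisson integral of a bounded function, so $D,1/D\in H^\infty$ with norms $\ll 1$, and $|D(e^{i\theta})|^2=g(\theta)$ a.e. Next, using that $\log$ and $\exp$ are Lipschitz on the relevant bounded ranges (composition with a Lipschitz map preserves $B^{1/2}_{2,\infty}$ in this regime) and that the analytic projection is bounded on $B^{1/2}_{2,\infty}$, one gets $1/D\in B^{1/2}_{2,\infty}(\mathbb{T})$, so its Taylor coefficients $b_k$ satisfy $\sum_{k>m}|b_k|^2=O(1/m)$. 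Through the standard Schur-algorithm correspondence between the Verblunsky coefficients $\{a_j\}$ of $g$ and the Taylor coefficients of $1/D$, the same estimate translates into $\sum_{j\ge m}|a_j|^2=O(1/m)$.

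Then I would assemble the endgame. Using orthonormality, $|\varphi_m^*|=|\varphi_m|$ and $|D|^2=g$, one has $\int_{\mathbb{T}}|\varphi_m^*D|^2\,\tfrac{d\theta}{2\pi}=1$, while $\widehat{\varphi_m^*D}(0)=\varphi_m^*(0)D(0)=\kappa_mD(0)$ with $\kappa_m$ the leading coefficient of $\varphi_m$; hence $\|\varphi_m^*D-1\|_{L^2(\mathbb{T})}^2=2\bigl(1-\kappa_mD(0)\bigr)$. By Szeg\H{o}'s classical product formula $\kappa_mD(0)=\prod_{j\ge m}\sqrt{1-|a_j|^2}$ one gets $1-\kappa_mD(0)\le\sum_{j\ge m}|a_j|^2$, and therefore
\begin{equation*}
\|\varphi_m^*-1/D\|_{L^2(\mathbb{T})}^2\ \le\ \|1/D\|_\infty^2\,\|\varphi_m^*D-1\|_{L^2(\mathbb{T})}^2\ \ll\ \sum_{j\ge m}|a_j|^2\ =\ O(1/m).
\end{equation*}
Let $V_m(1/D)$ be the de la Vall\'ee Poussin mean of $1/D$, an algebraic polynomial of degree $<2m$ with $\|V_m(1/D)\|_{L^\infty(\mathbb{T})}\ll\|1/D\|_\infty$ and $\|1/D-V_m(1/D)\|_{L^2(\mathbb{T})}=O(m^{-1/2})$. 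Then $P_m:=\varphi_m^*-V_m(1/D)$ is a polynomial of degree $<2m$ with $\|P_m\|_{L^2(\mathbb{T})}=O(m^{-1/2})$, so the Nikolskii bound $\|P\|_{L^\infty(\mathbb{T})}\le\sqrt{\deg P+1}\,\|P\|_{L^2(\mathbb{T})}$ (immediate from Cauchy--Schwarz on Fourier coefficients) gives $\|P_m\|_\infty=O(1)$, whence $\|\varphi_m^*\|_\infty\le\|P_m\|_\infty+\|V_m(1/D)\|_\infty=O(1)$ uniformly in $m$. Tracing back through the Szeg\H{o} mapping yields $\sup_n\|p_n\|_{L^\infty[-1,1]}<\infty$.

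\textbf{Main obstacle.}
The delicate step is the $L^2\to L^\infty$ passage in the last paragraph: a uniform sup-norm bound for $\varphi_m^*$ is squeezed out of the $L^2$-convergence $\varphi_m^*\to1/D$ only because, at the borderline smoothness $B^{1/2}_{2,\infty}$, the $L^2$ rate $O(m^{-1/2})$ exactly cancels the $\sqrt{m}$ loss in the Nikolskii inequality --- and only if $1/D$ is approximated by polynomials whose sup-norms stay bounded (de la Vall\'ee Poussin means), \emph{not} by Fourier partial sums. This sharp balance is precisely why \eqref{eq:lipschitz} is imposed with an $O(h)$ (rather than merely $o(1)$) modulus. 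The second technical ingredient, transferring the $B^{1/2}_{2,\infty}$-smoothness of $g$ to the decay $\sum_{j\ge m}|a_j|^2=O(1/m)$ of the Verblunsky coefficients, is also somewhat involved but standard; by comparison the Szeg\H{o}-mapping bookkeeping (parities, normalizations, and the uniform constants coming from $c\le g\le C$) is routine.
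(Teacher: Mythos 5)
The statement labeled Theorem \ref{thm:freud:1} is not proved in this paper at all: it is imported verbatim from Freud's book, and the only accompanying text is a pointer to the relevant pages (Table~I on p.~245, the function-space definitions on p.~229, and the remark around (5.26) on p.~244 reducing it to the version of Theorem~5.2 on p.~232). So there is no internal proof for your argument to be compared against; your proposal is an independent reconstruction of Freud's result, not a re-derivation of something the paper itself establishes.

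On the merits of the reconstruction: the Szeg\H{o}-correspondence route is the right family of ideas, and the endgame (the identity $\|\varphi_m^*D-1\|_{L^2}^2=2(1-\kappa_mD(0))$, passage to $\|\varphi_m^*-1/D\|_{L^2}=O(m^{-1/2})$, de la Vall\'ee Poussin means, Nikolskii) is assembled correctly. The one step you should not wave through is the claim that $1/D\in B^{1/2}_{2,\infty}$ ``translates into'' $\sum_{j\ge m}|a_j|^2=O(1/m)$ via a ``standard Schur-algorithm correspondence.'' The map from the Taylor coefficients of $1/D$ to the Verblunsky coefficients is genuinely nonlinear, and this transfer is a theorem in its own right, not routine bookkeeping. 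Fortunately you can bypass the Verblunsky coefficients entirely: $\kappa_m^{-2}$ is the extremal value $\min\int|P|^2g\,d\theta/2\pi$ over polynomials $P$ of degree $\le m$ normalized by $P(0)=1$ (equivalently over $P=\varphi_m^*/\kappa_m$ competitors), and testing with the degree-$m$ de la Vall\'ee Poussin mean of $1/D$ divided by its value at $0$ gives $\kappa_m^{-2}\le D(0)^{-2}(1+O(1/m))$ directly from the Besov bound; combined with $\kappa_m\le D(0)^{-1}$ this yields $1-\kappa_mD(0)=O(1/m)$ without ever mentioning $\{a_j\}$. With that substitution the proof closes cleanly. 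A minor point worth recording: the Szeg\H{o} relation expresses $p_n(\cos\theta)$ as a normalized real part of $e^{in\theta}\varphi_{2n}^*(e^{i\theta})$ (or an equivalent combination of $\varphi_{2n}$ and $\varphi_{2n}^*$), so the sup-norm comparison $|p_n(\cos\theta)|\ll|\varphi_{2n}^*(e^{i\theta})|$ holds with a constant controlled by $c,C$, exactly as you assert; just make sure to track the parity/normalization constants if writing this out in full.
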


The next result, also from \cite{freudorthogonal} gives less stringent conditions for $p_n$ to be uniformly bounded in any compact subset of the interior of the spectrum. 
\begin{theorem}\cite[Theorem 5.7, page 245]{freudorthogonal} \label{thm:freud:2}
	Suppose that $\alpha$ and $\mu$ are finite measures that are absolutely continuous and supported on $[-1, 1]$ with densities $\alpha'$ and $\mu'$ respectively. Assume that for some constants $c$ and $C$, $0\le c\le \alpha'(x)\sqrt{1-x^{2}}\le C$ for all $x\in (-1, 1)$ and that the sequence of orthonormal polynomials with respect to $\alpha$ is uniformly bounded on $[-1, 1]$. Let $[a, b]\subset (-1, 1)$. Assume that $\alpha'(x) = \mu'(x)$ for all $x \in [a, b]$ and  
	\begin{eqnarray*}\label{cond:freud:2}
		\int_{-1}^{1} \frac{dx}{\mu'(x)\sqrt{1-x^{2}}}<\infty.
	\end{eqnarray*}
	Then the orthonormal sequence of $\mu$ is uniformly bounded in every closed part of $(a, b)$. 
\end{theorem}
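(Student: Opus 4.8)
The final displayed statement is Theorem~\ref{thm:freud:2}, a classical result quoted verbatim from Freud's monograph \cite[Theorem 5.7, page 245]{freudorthogonal}; the present paper uses it as a black box and does not reprove it. Nonetheless, here is the plan one would follow to establish it, which is essentially the argument in \cite{freudorthogonal}.

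\textbf{Plan.} The strategy is a \emph{comparison} between the orthonormal system of $\mu$ and that of $\alpha$, exploiting that the two densities agree on $[a,b]$ and that the $\alpha$-system is already known to be uniformly bounded there. First I would set up the Christoffel function machinery: write $\lambda_n(\beta,x) = \big(\sum_{k=0}^{n-1} q_k(x)^2\big)^{-1}$ for the Christoffel function of a measure $\beta$ with orthonormal polynomials $(q_k)$, and recall the extremal characterization $\lambda_n(\beta,x) = \min\{\int |P|^2\,d\beta : \deg P \le n-1,\ P(x)=1\}$. The key monotonicity is that if $\beta_1 \le \beta_2$ as measures then $\lambda_n(\beta_1,x)\le \lambda_n(\beta_2,x)$ pointwise. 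Since $\mu'=\alpha'$ on $[a,b]$, one splits $\mu = \mu|_{[a,b]} + \mu|_{[-1,1]\setminus[a,b]}$ and compares with $\alpha$ restricted similarly; the finiteness hypothesis $\int_{-1}^1 dx/(\mu'(x)\sqrt{1-x^2})<\infty$ is exactly what controls the ``far'' part.

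\textbf{Main steps.} (1) Use the uniform boundedness of the $\alpha$-orthonormal polynomials on $[-1,1]$ together with $0\le c\le \alpha'(x)\sqrt{1-x^2}\le C$ to get two-sided bounds $\lambda_n(\alpha,x)\asymp n^{-1}\sqrt{1-x^2}$ on compact subsets of $(-1,1)$, via the upper bound from the extremal problem using Chebyshev-type test polynomials and the lower bound from the assumed sup-norm bound $\sum_{k<n} q_k(x)^2 \le C^2 n$. (2) Transfer this to $\mu$ on $[a,b]$: since $\mu'=\alpha'$ there, for a test polynomial $P$ with $P(x)=1$ one has $\int_{[a,b]}|P|^2 d\mu = \int_{[a,b]}|P|^2 d\alpha \le \int |P|^2 d\alpha$, and by optimizing and using a localization argument (a polynomial peaked near $x\in(a,b)$ contributes negligibly outside $[a,b]$) one deduces $\lambda_n(\mu,x)\gtrsim \lambda_n(\alpha,x) \gtrsim n^{-1}$ uniformly on closed subintervals of $(a,b)$. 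Here the integrability condition on $1/\mu'$ over all of $[-1,1]$ is used to bound the tail contribution $\int_{[-1,1]\setminus[a,b]}|P|^2\,d\mu$ from below in terms of a quantity that does not destroy the estimate — equivalently, to produce good test polynomials that are small off $[a,b]$ while remaining $1$ at $x$. (3) From $\lambda_n(\mu,x)\gtrsim n^{-1}$, i.e. $\sum_{k<n} p_k(x)^2 \lesssim n$, upgrade to a pointwise bound on individual $p_n$. This is the step where one invokes a Markov--Stieltjes / three-term recurrence argument: combine the bound on the reproducing kernel $K_n(x,x)=\sum_{k<n}p_k(x)^2$ with the Christoffel--Darboux formula and the recurrence coefficients $a_n\to 1/2$, $b_n\to 0$ (which themselves follow from the $\alpha$-comparison and standard ratio-asymptotics on $(a,b)$), to conclude $\sup_n \sup_{x\in[a',b']}|p_n(x)|<\infty$ for every $[a',b']\subset(a,b)$.

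\textbf{Main obstacle.} The delicate point is step~(2): producing, for each $x$ in a closed subinterval of $(a,b)$ and each $n$, a polynomial of degree $\le n-1$ that equals $1$ at $x$, has $\int_{[a,b]}|P|^2\,d\alpha = O(n^{-1})$, \emph{and} whose mass $\int_{[-1,1]\setminus[a,b]}|P|^2\,d\mu$ is also $O(n^{-1})$ — the last bound being where the global hypothesis $\int_{-1}^1 dx/(\mu'(x)\sqrt{1-x^2})<\infty$ enters, since without integrability of $1/\mu'$ one cannot rule out that $\mu'$ is so small somewhere that a nearly-extremal polynomial for $\alpha$ nonetheless has uncontrollably large $\mu$-mass there (this would be visible in the Szeg\H{o}-type condition failing). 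Managing this requires either Freud's explicit construction of localized ``de la Vall\'ee-Poussin''-type kernels adapted to the arcsine-type weight, or a Szeg\H{o}-function comparison; everything else is bookkeeping with the extremal property of Christoffel functions.
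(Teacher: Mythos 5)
You correctly identify the key meta-level fact: Theorem~\ref{thm:freud:2} is quoted verbatim from Freud's monograph and is used in the paper as a black box. The paper contains no proof of it, so there is no ``paper's own proof'' to compare your sketch against; the relevant check is simply whether you recognized the statement's role, which you did.

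As for the sketch itself: the Christoffel-function comparison framework you outline (extremal characterization, monotonicity in the measure, exploitation of $\mu'=\alpha'$ on $[a,b]$, and use of the integrability of $1/(\mu'\sqrt{1-x^2})$ to control the tail of localized test polynomials) is indeed the right circle of ideas for results of this type, and you correctly flag the localization construction as the crux. The one place where your outline is thinner than it looks is step~(3). A two-sided Christoffel bound $\lambda_n(\mu,x)\asymp 1/n$ gives $\sum_{k<n}p_k(x)^2\lesssim n$, which is an averaged bound and does not by itself give $\sup_n|p_n(x)|=O(1)$; a priori a sparse set of $k$ could carry $p_k(x)^2\sim n$. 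Your appeal to Christoffel--Darboux and to recurrence-coefficient asymptotics $a_n\to 1/2$, $b_n\to 0$ is the right direction, but note that those asymptotics are not a free consequence of the stated hypotheses on $\mu$ alone and would themselves need to be extracted from the comparison with $\alpha$; this is where Freud's actual argument does real work, and a complete proof would have to spell out that transfer rather than invoke it. So: correct identification that no proof is required here, and a plausible but genuinely incomplete sketch, with the gap located exactly where you suspected plus the additional gap in passing from kernel estimates to pointwise bounds on individual $p_n$.
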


We are now ready to prove Theorem \ref{t.main2}.

 	For every $\ep'>0$, let $I = [-1+\ep', 1-\ep']$ be a compact interval in $(-1, 1)$. All we need to do is to show that \eqref{cond:pn} holds for $I$.
 	
Let $\alpha$ be the measure supported on $[-1, 1]$ with density
	$$\alpha'(x) = w(x)\textbf{1}_{x\in [-1 +\ep'/3, 1-\ep'/3]}+ \frac{1}{\sqrt{1-x^{2}}} \textbf{1}_{x\in [-1, 1]\setminus [-1 +\ep'/3, 1-\ep'/3]}.$$
	
	Observe that $\alpha' = \mu'$ on $[-1+\ep'/3, 1 - \ep'/3]$ and $0\le c\le \alpha'(x)\sqrt{1-x^{2}}\le C$ for all $x\in (-1, 1)$ for some constant $c, C$.
	
	We shall now show that $\alpha'$ satisfies \eqref{eq:lipschitz}. To this end, letting 
	$$E = \{\theta\in [-\pi, \pi]: \cos \theta \in [-1+\ep'/3, 1-\ep'/3] \},$$ 
	we decompose the interval $[-\pi, \pi]$ into 
	$A_h = \{\theta: \theta\in E, \theta + h\in E\}$, $B_h = \{\theta: \theta\notin E, \theta+h\notin E\}$, and $C_h = [-\pi, \pi]\setminus (A_h\cup B_h)$.
	
	Observe that on $A_h$, $\alpha'(\cos(\theta+h)) |\sin(\theta+h)|-\alpha'(\cos(\theta)) |\sin(\theta)| = w(\cos(\theta+h)) |\sin(\theta+h)|-w(\cos(\theta)) |\sin(\theta)|$. By Condition \eqref{cond:ls}, 
	\begin{eqnarray*} 
		\int_{A_h}\left (\alpha'(\cos(\theta+h)) |\sin(\theta+h)|-\alpha'(\cos(\theta)) |\sin(\theta)|\right )^{2} d\theta &=& O(h) \quad\text{as } h\to 0.
	\end{eqnarray*}
	
	On $B_h$, $\alpha'(\cos(\theta+h)) |\sin(\theta+h)|-\alpha'(\cos(\theta)) |\sin(\theta)| = 1- 1 = 0$, so the corresponding integral on $B_h$ is 0. Finally, observe that $|C_h| = O(h)$ and that for all $t\in [-\pi, \pi]$, we have
	$$|\alpha'(\cos(t))| |\sin(t)| \le \min\{1 , |w(\cos t)|\textbf{1}_{\cos t  \in [-1 +\ep'/3, 1-\ep'/3]}\} = O(1).$$ 
	Thus, the integral on $C_h$ is also of order $\int_{C_h} 1 = O(h)$.
	
	Combining these bounds, we obtain \eqref{eq:lipschitz}.
	
	Having proved that $\alpha $ satisfies the assumptions of Theorem \ref{thm:freud:1}, we conclude that the sequence of orthonormal polynomials with respect to $\alpha$ is uniformly bounded on $[-1, 1]$.  By Assumption \eqref{cond:cir}, $\mu$ also satisfies the hypothesis of Theorem \eqref{thm:freud:2}. Thus, by this theorem, 
	we obtain 
	\begin{equation*}\label{key}
	|p_j(x)| = O(1) \quad\text{uniformly for all $x\in [-1+\ep'/2, 1 - \ep'/2]$ and all $j$,}
	\end{equation*}
as desired.

 \section{Acknowledgements} We would like to thank Igor Pritsker and Doron Lubinsky for helpful discussions and references.
 

\bibliographystyle{plain}
\bibliography{polyref}

\begin{thebibliography}{10}

\bibitem{das1982real}
M~Das and SS~Bhatt.
\newblock Real roots of random harmonic equations.
\newblock {\em Indian J. Pure Appl. Math}, 13:411--420, 1982.

\bibitem{das1971real}
Minaketan Das.
\newblock Real zeros of a random sum of orthogonal polynomials.
\newblock {\em Proceedings of the American Mathematical Society},
  27(1):147--153, 1971.

\bibitem{dauvergne2021necessary}
Duncan Dauvergne.
\newblock A necessary and sufficient condition for convergence of the zeros of
  random polynomials.
\newblock {\em Advances in Mathematics}, 384:107691, 2021.

\bibitem{devore1993constructive}
Ronald~A DeVore and George~G Lorentz.
\newblock {\em Constructive approximation}, volume 303.
\newblock Springer Science \& Business Media, 1993.

\bibitem{DHV}
Yen Do, Hoi Nguyen, and Van Vu.
\newblock Real roots of random polynomials: expectation and repulsion.
\newblock {\em Proceedings of the London Mathematical Society},
  111(6):1231--1260, 2015.

\bibitem{DOV}
Yen Do, Oanh Nguyen, and Van Vu.
\newblock Roots of random polynomials with coefficients with polynomial growth.
\newblock {\em Annals of probability}, 46(5):2407--2494, 2018.

\bibitem{do2019}
Yen~Q. Do.
\newblock Real roots of random polynomials with coefficients of polynomial
  growth: a comparison principle and applications.
\newblock {\em arXiv preprint, 1905.02101}, 2019.

\bibitem{EO}
Paul Erd{\"o}s and AC~Offord.
\newblock On the number of real roots of a random algebraic equation.
\newblock {\em Proceedings of the London Mathematical Society}, 3(1):139--160,
  1956.

\bibitem{flasche}
Hendrik Flasche.
\newblock Expected number of real roots of random trigonometric polynomials.
\newblock {\em Stochastic Processes and their Applications}, 2017.

\bibitem{freudorthogonal}
G~Freud.
\newblock {\em Orthogonal Polynomials. 1971}.

\bibitem{halasz1977}
G~Hal{\'a}sz.
\newblock Estimates for the concentration function of combinatorial number
  theory and probability.
\newblock {\em Periodica Mathematica Hungarica}, 8(3-4):197--211, 1977.

\bibitem{Ibragimov1971expected1}
Ildar~A Ibragimov and Nina~B Maslova.
\newblock On the expected number of real zeros of random polynomials i.
  coefficients with zero means.
\newblock {\em Theory of Probability \& Its Applications}, 16(2):228--248,
  1971.

\bibitem{Kac1943average}
Mark Kac.
\newblock On the average number of real roots of a random algebraic equation.
\newblock {\em Bulletin of the American Mathematical Society}, 49(1):314--320,
  1943.

\bibitem{LO1}
John~Edensor Littlewood and Albert~Cyril Offord.
\newblock On the number of real roots of a random algebraic equation (iii).
\newblock {\em Rec. Math. [Mat. Sbornik]}, 12(3):277--286, 1943.

\bibitem{LO2}
John~Edensor Littlewood and Albert~Cyril Offord.
\newblock On the distribution of the zeros and $\alpha$-values of a random
  integral function (i).
\newblock {\em Journal of the London Mathematical Society}, 1(3):130--136,
  1945.

\bibitem{LO3}
John~Edensor Littlewood and Albert~Cyril Offord.
\newblock On the distribution of zeros and a-values of a random integral
  function (ii).
\newblock {\em Annals of Mathematics}, pages 885--952, 1948.

\bibitem{lubinsky2016linear}
D~Lubinsky, I~Pritsker, and X~Xie.
\newblock Expected number of real zeros for random linear combinations of
  orthogonal polynomials.
\newblock {\em Proceedings of the American Mathematical Society},
  144(4):1631--1642, 2016.

\bibitem{LPX1}
Doron~S Lubinsky, Igor Pritsker, and Xiaoju Xie.
\newblock Expected number of real zeros for random linear combinations of
  orthogonal polynomials.
\newblock {\em Proceedings of the American Mathematical Society},
  144(4):1631--1642, 2016.

\bibitem{nevai1994orthogonal}
Paul Nevai.
\newblock Orthogonal polynomials.
\newblock In {\em Linear and Complex Analysis Problem Book 3}, pages 177--206.
  Springer, 1994.

\bibitem{NNV}
Hoi Nguyen, Oanh Nguyen, and Van Vu.
\newblock On the number of real roots of random polynomials.
\newblock {\em Communications in Contemporary Mathematics}, page 1550052, 2015.

\bibitem{nguyenvurandomfunction17}
Oanh Nguyen and Van Vu.
\newblock Roots of random functions: A framework for local universality.
\newblock {\em American Journal of Mathematics}, 144(1):1--747, 2022.

\bibitem{Q}
C.~Qualls.
\newblock On the number of zeros of a stationary {Gaussian} random
  trigonometric polynomial.
\newblock {\em Electronic Journal of Probability}, 2(2), 1970.

\bibitem{stahl1992general}
Herbert Stahl, Johannes Steel, and V~Totik.
\newblock {\em General orthogonal polynomials}.
\newblock Number~43. Cambridge University Press, 1992.

\bibitem{TVpoly}
Terence Tao and Van Vu.
\newblock Local universality of zeroes of random polynomials.
\newblock {\em International Mathematics Research Notices}, page rnu084, 2014.

\bibitem{wilkins1997expected}
J~Wilkins~Jr.
\newblock The expected value of the number of real zeros of a random sum of
  {Legendre} polynomials.
\newblock {\em Proceedings of the American Mathematical Society},
  125(5):1531--1536, 1997.

\end{thebibliography}
\end{document}